\documentclass[reqno,11pt]{amsart}

\usepackage{mathrsfs}
\usepackage{amsmath,amssymb,slashed}
\usepackage{color,xcolor}
\usepackage[english]{babel}
\usepackage[utf8]{inputenc}
\usepackage{amsmath,amscd}
\usepackage{amsfonts}
\usepackage{amsthm}
\usepackage{graphicx}
\usepackage[colorinlistoftodos]{todonotes}
\usepackage{amsmath,amssymb,amsfonts}
\usepackage{underscore}
\usepackage{xy}
\usepackage[T1]{fontenc}
\usepackage[utf8]{inputenc}
\usepackage{amsthm}
\usepackage{mathrsfs}
\usepackage{amsmath,amssymb,slashed}
\usepackage{color,xcolor}
\usepackage{amsmath,amscd}

\input xy
\xyoption{all}

\theoremstyle{plain}
\newtheorem{thm}{Theorem}[section]
\newtheorem{prop}[thm]{Proposition}
\newtheorem{lem}[thm]{Lemma}

\newtheorem{pro}[thm]{Problem}

\theoremstyle{definition}
\newtheorem{defn}[thm]{Definition}

\theoremstyle{remark}
\newtheorem{rmk}[thm]{Remark}

\newcommand{\B}{\mathcal{B}}

\newcommand{\la}{\langle}
\newcommand{\ra}{\rangle}

\makeatletter
\@namedef{subjclassname@2020}{\textup{2020} Mathematics Subject Classification}
\makeatother

\begin{document}

\title[]{Takesaki duality for weak* closed $L^p$-operator crossed products}

\author[Z. Wang]{Zhen Wang}
%\curraddr{Department of Mathematics\\Jilin University\\Changchun 130012\\P.~R. China}
\address{School of Mathematics\\HangZhou Normal University \\HangZhou 311121\\
P. R. China}
\email{wangzhen@hznu.edu.cn}

%\author[S. Zhu]{Sen Zhu}
%\address{Department of Mathematics\\Jilin University\\Changchun 130012\\P.~R. China}
%\email{zhusen@jlu.edu.cn}
%\thanks{}

\subjclass[2020]{Primary 47L65, 47L55; Secondary 22D25, 22D35 }
\keywords{Takesaki duality, crossed products, $p$-pseudomeasure algebras, Gelfand transform}

\begin{abstract}
The aim of this paper is to study  Takesaki duality for weak* closed $L^p$-operator crossed product $W^*_p(G,A,\alpha)$, where $G$ is a countable discrete Abelian group, $A$ is a unital separable weak* closed $L^p$-operator algebra ($p>1$), and $\alpha$ is a weak* continuous $p$-completely isometric action of $G$ on $A$. In this paper, we construct a weak* continuous homomorphism  $\Phi$ from $W^*_p(\hat{G},W^*_p(G,A,\alpha),\hat{\alpha})$ to $\mathcal{B}(l^{p}(G))\bar{\otimes}A$. We show that $\Phi$ is an isomorphism if and only if either $p=2$ or $G$ is finite, and $\Phi$ is an isometric isomorphism if either $p=2$ or $G$ is trivial.  It is also proved that $\Phi$ is equivariant for the double dual action $\hat{\hat{\alpha}}$ of $G$ on $W^*_p(\hat{G},W^*_p(G,A,\alpha),\hat{\alpha})$ and the action $\mathrm{Ad}\rho_p\otimes\alpha$ of $G$ on $\mathcal{B}(l^p(G))\bar{\otimes} A$.
Furthermore, we prove that  $W^*_p(\hat{G},W^*_p(G,A,\alpha),\hat{\alpha})$ is weak* continuous isometrically isomorphic to $\mathcal{B}(l^{p}(G))\bar{\otimes}A$ if and only if either $p=2$ or $G$ is trivial, and $W^*_p(\hat{G},W^*_p(G,A,\alpha),\hat{\alpha})$ is weak* continuous isomorphic to $\mathcal{B}(l^{p}(G))\bar{\otimes}A$ if and only if either $p=2$ or $G$ is finite when $A=M_n^p$. This shows that  Takesaki duality theorem of von Neumann algebras can be generalized to weak* closed $L^2$-operator algebras, and this theorem can not be generalized to  weak* closed $L^p$-operator algebras when $p\in (1,\infty)\setminus\{2\}$.
\end{abstract}

\date{\today}
\maketitle

 %\tableofcontents

\section{Introduction}

The Takesaki duality theorem for crossed products of von Neumann algebras, pioneered by M. Takesaki in 1973, represents a profound symmetry inherent to von Neumann algebras equipped with group actions.
Recall that a W*-dynamical system is a triple $(G,M,\alpha)$ consisting of a von Neumann algebra $M$, a locally compact group $G$, and a continuous homomorphism $\alpha:G\rightarrow \mathrm{Aut}(M)$, where $\mathrm{Aut}(M)$ is the group of $*$-automorphisms of $M$ with pointwise weak* topology. We denote by $M\rtimes_\alpha G$ the crossed product of $M$  by the action $\alpha$ of $G$ on $M$.
If, in addition, $G$ is Abelian, then there exists a dual action $\hat{\alpha}$ of $\hat{G}$ on $M\rtimes_\alpha G$. The Takesaki duality theorem (\cite[Theorem4.5]{dual}) says that the iterated crossed product $(M\rtimes_{\alpha}G)\rtimes _{\hat{\alpha}}\hat{G}$ is $*$-isomorphic to $\mathcal{B}(L^2(G))\bar{\otimes} M$, where $\mathcal{B}(L^2(G))$ is the algebra of all bounded linear operators on $L^2(G)$ and $\bar{\otimes}$ is the von Neumann algebraic tensor product. This theorem not only deepened the understanding of the structure of von Neumann algebras but also became an indispensable tool in their classification (see \cite{Takesaki}).

%It was subsequently generalized by Nakagami and Sutherland to twisted crossed products and regular extensions, and later integrated into the broader framework of coactions by Yamanouchi and others.

%Building on the foundational Tomita-Takesaki modular theory \cite{takesaki2002theory}, Takesaki demonstrated that the crossed product construction is, in a precise sense, an involution up to stabilization.

%In 1973, M. Takesaki obtained a remarkable duality theorem for crossed products of von-Neumann algebras.
%Takesaki duality theorem is a fundamental  result concerning crossed products of von-Neumann algebras.  The Takesaki duality theorem has applications in the structure analysis of von Neumann algebra of type $\mathrm{III}$ (see \cite{Takesaki}), and the proof of  injectivity implying semidiscreteness (see \cite[Theorem 9.3.3]{Brown and Ozawa}).

Inspired by Takesaki duality theorem for crossed products of von Neumann algebras, H. Takai proved a duality theorem for crossed products of $C^*$-algebras in 1975. The Takai duality theorem says that the iterated crossed product $(A\rtimes_{\alpha}G)\rtimes _{\hat{\alpha}}\hat{G}$ is $*$-isomorphic to $\mathcal{K}(L^{2}(G))\otimes A$,
where $\mathcal{K}(L^{2}(G))$ denotes the algebra of compact operators on $L^{2}(G)$.

In 2014, N. C. Phillips asked a problem if there is an $L^p$-version of the Takai duality theorem (see \cite[Problem 8.7]{PlpsOpenQues}). S. Zhu and I proved that the Takai duality theorem can not be generalized to crossed products of $L^p$-operator algebras when $p\neq 2$ (see \cite{WZ}).

For $p\in[1,\infty)$, we say that a Banach algebra $A$ is an {\it $L^{p}$-operator algebra} if
it is isometrically isomorphic to a norm closed subalgebra of the algebra $\mathcal{B}(E)$ of all bounded linear operators on some $L^p$-space $E$.
The study of $L^p$-operator algebras traces back to C. Herz's influential work on harmonic analysis of group algebras in the 1970's \cite{Herz,Herz1,Herz2}.
Recently, people renew interest in $L^p$-operator algebras due to the work of N. C. Phillips.
In the passing ten years, N. C. Phillips  introduced and studied $L^p$-operator algebras \cite{Phillips Lp Cuntz,Odp,Lp UHF,N. C. Phillips Lp, PlpsOpenQues,Phillips look like,Lp AF}. These studies encourage many authors to participate in the research of $L^p$-operator algebras. This includes the work on group $L^p$-operator algebras \cite{Gardella and Thiel,Z}; groupoid $L^p$-operator algebras \cite{Gardella and Lupini groupoid}; $L^p$-Roe type algebras \cite{Chung}; $L^p$-operator crossed products \cite{Gardella and Thiel convolution,WZ,WZ2} and the $l^p$-Toeplitz algebra \cite{WW}.

Since von-Neumann algebras are weak* closed $L^2$-operator algebras, it is natural to ask if there is a  Takesaki duality theorem for crossed products of weak* closed $L^p$-operator algebras.
The aim of this paper is study this problem.

Let $(G,A,\alpha)$ be a weak* closed $L^p$-operator algebra dynamical system, we will introduce the weak* closed $L^p$-operator crossed product $W^*_p(G,A,\alpha)$ of $A$ by the action of $\alpha$ (see Section \ref{crossed}). If $G$ is Abelian, then there exists a dual action $\hat{\alpha}$ of $\hat{G}$ on $W^*_p(G,A,\alpha)$ (see Section \ref{dual}).
It is natural to consider the following problem.

\begin{pro}
Is it the iterated weak* closed $L^p$-operator crossed product $W^*_p(\hat{G},W^*_p(G,A,\alpha),\hat{\alpha})$ weak* continuous isometric isomorphic (or isomorphic) to $\mathcal{B}(L^p(G))\bar{\otimes} A$.
\end{pro}

We will study this problem when $G$ is a countable discrete Abelian group, $E$ is a $\sigma$-finite separable $L^p$-space, $A\subset\mathcal{B}(E)$ is an unital weak* closed subalgebra such that the unit $I_A$ of $A$ is the identity operator on $E$. The following is main theorems of this paper.

\begin{thm}\label{Thm1}
Let $p\in (1,\infty)$. Let $(G,A,\alpha)$ and $\hat{\alpha}$ as above. If $\alpha$ is a $p$-completely isometric action of $G$ on $A$, then there is a homomorphism $\Phi:W^*_p(\hat{G},W^*_p(G,A,\alpha),\hat{\alpha})\rightarrow \mathcal{B}(l^p(G))\bar{\otimes} A$. Moreover,
\begin{enumerate}
\item[(i)] the map $\Phi$ is injective contractive weak* continuous ;
\item[(ii)] the map $\Phi$ is surjective if and only if $G$ is finite or $p=2$;
\item[(iii)] the map $\Phi$ is  isometrically  isomorphic if either $p=2$ or $G$ is trivial;
\item[(iv)] the map $\Phi$ is equivariant for the double dual action $\hat{\hat{\alpha}}$ of $G$ on $W^*_p(\hat{G},W^*_p(G,A,\alpha),\hat{\alpha})$ and the action $\mathrm{Ad}\rho\otimes\alpha$ of $G$ on $\mathcal{B}(l^p(G))\bar{\otimes}A$, where $$\hat{\hat{\alpha}}_{t}(F)(\gamma, s):=\overline{\gamma(t)} F(\gamma, s)$$ for all $t\in G, F\in C_c(\hat{G}\times G,A)\subset W^*_p(\hat{G},W^*_p(G,A,\alpha),\hat{\alpha})$, and
   $$(\mathrm{Ad}\rho_p\otimes\alpha)_s (T\otimes a)=(\mathrm{Ad}\rho_p)_s(T)\otimes \alpha_s(a)$$ for all $T\in \mathcal{B}(l^p(G)), a\in A$.
\end{enumerate}

\end{thm}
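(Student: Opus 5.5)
We will realize both sides concretely on $l^p(\hat G\times G,E)$-type spaces and write down $\Phi$ on the dense subalgebra $C_c(\hat G\times G,A)$. Since $G$ is countable discrete, $\hat G$ is compact and $W^*_p(G,A,\alpha)$ sits in $\mathcal B(l^p(G,E))$. It is generated by $\pi(a)\xi(s)=\alpha_{s}^{-1}(a)\xi(s)$ and the left translations $\lambda_t$. The dual action is implemented by multiplication by characters, $\hat\alpha_\gamma=\mathrm{Ad}\,\mu_\gamma$ with $(\mu_\gamma\xi)(s)=\gamma(s)\xi(s)$.

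The iterated crossed product then acts on $L^p(\hat G,l^p(G,E))$. We use Fourier transform in the $\hat G$ variable only on the $L^2$ level of intuition, and define $\Phi$ directly by the classical formula
\[
\Phi(F)=\sum_{s\in G}\int_{\hat G}M_{\gamma}\,(\lambda_s\otimes 1)\,(1\otimes\cdots)\,F(\gamma,s)\,d\gamma,
\]
where $M_\gamma$ denotes multiplication by $\gamma$ on $l^p(G)$. Concretely, the generators are mapped as follows:
\begin{itemize}
\item $\pi(a)\mapsto$ the diagonal operator $\mathrm{diag}(\alpha_s^{-1}(a))_{s}$ in $\mathcal B(l^p(G))\bar\otimes A$;
\item $\lambda_t\mapsto\lambda_t\otimes 1$;
\item the $\hat G$-unitaries $\gamma\mapsto M_\gamma\otimes 1$.
\end{itemize}
Each of these is a bounded operator on $l^p(G,E)=l^p(G)\otimes_p E$ and lies in $\mathcal B(l^p(G))\bar\otimes A$. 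The covariance relations $M_\gamma\lambda_t M_\gamma^{-1}=\gamma(t)\lambda_t$ and $M_\gamma$ commuting with diagonals give a covariant pair. Contractivity of the integrated form follows from the universal/regular construction of $W^*_p$ in Sections \ref{crossed} and \ref{dual}, because the covariant representation is realized spatially on an $L^p$-space. Here the $p$-complete isometry of $\alpha$ is used to see that $\mathrm{diag}(\alpha_s^{-1}(a))$ has norm $\|a\|$ uniformly and extends to the weak* tensor product. Weak* continuity follows since each ingredient is weak* continuous and we extend by weak* density. Injectivity comes from a conditional expectation argument: compose $\Phi$ with compressions onto matrix units $e_{s,t}\otimes A$ and recover Fourier coefficients $F(\cdot,s)$, using that $\hat G$ is compact, so Fourier coefficients determine elements.

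For (iv), we check on generators: $\hat{\hat\alpha}_t$ multiplies the $\hat G$-generator $\gamma$ by $\overline{\gamma(t)}$. Meanwhile $\mathrm{Ad}\rho_t\otimes\alpha_t$ sends $M_\gamma\mapsto \overline{\gamma(t)}M_\gamma$, fixes $\lambda_s$ (left and right translations commute), and sends $\mathrm{diag}(\alpha_s^{-1}(a))$ to $\mathrm{diag}(\alpha_{t}\alpha_{s+t}^{-1}(a))$, which is consistent. Weak* continuity then extends the identity from generators.

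For (ii) and (iii): if $G$ is finite, $\mathcal B(l^p(G))=M_{|G|}^p$ is spanned by $M_\gamma\lambda_s$ (these span all matrices by finite Fourier analysis), so the image contains $M_n^p\otimes A$ and $\Phi$ is onto. If $p=2$, this is Takesaki's theorem applied inside $\mathcal B(L^2)$, or a direct argument with the Fourier unitary, and $\Phi$ is isometric. For trivial $G$, $\Phi$ is the identity on $A$.

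The main obstacle is the ``only if'' in (ii): showing non-surjectivity for infinite $G$ and $p\ne2$. Our plan is to show that the range of $\Phi$ lies in the weak* closure of the span of $\{M_\gamma\lambda_s\}\otimes A$. For $A=\mathbb C$ this is the weak* closure of $l^\infty(G)\cdot PM_p(G)$-type products. We then show it misses some operator in $\mathcal B(l^p(G))$, e.g. a rank-one operator $\xi\otimes\eta$ with $\xi\in l^p$ not implementable. We will argue via the ``Fourier multiplier'' structure: every element of the range has matrix $(T_{s,t})$ with $T_{s,t}$ depending on a function of $s-t$ times a multiplier. Using that $PM_p(G)\ne PM_2(G)$ for infinite $G$ and $p\neq2$, together with Herz's results, we extract a contradiction via a slice map $\mathcal B(l^p(G))\bar\otimes A\to\mathcal B(l^p(G))$ with a normal functional on $A$.
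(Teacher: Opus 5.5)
Your $\Phi$ is the integrated form of $(\mathrm{id}_{W^*_p(G,A,\alpha)},\ \gamma\mapsto V_\gamma\otimes I_E)$ on $l^p(G)\otimes_p E$. It is the Fourier transform in the $\hat G$-variable and coincides with the paper's $\Phi_4\circ\Phi_3\circ\Phi_2\circ\Phi_1$, so building it in one step is a reasonable alternative to the four-step factorization. But the ``only if'' in (ii) is a genuine gap, and the plan you sketch for it cannot work.

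\begin{itemize}
\item The weak* closed span of $\{M_\gamma\lambda_s\}\otimes A$ is all of $\mathcal{B}(l^p(G))\bar\otimes A$. Characters span a weak* dense subspace of $l^\infty(G)$, so this span contains every $e_{s,s}$, hence every matrix unit $e_{r,s}=\lambda_{rs^{-1}}e_{s,s}$. It therefore imposes no constraint.
\item A criterion of the form ``every diagonal is a multiplier'' cannot exclude your rank-one witness. The diagonals $v\mapsto\xi(sv)\eta(v)$ of $\xi\otimes\eta$ lie in $l^1(G)\subseteq\Gamma_p(PM_p(\hat G))$.
\item $PM_p(G)$ is the wrong algebra, since $\Phi$ is just the inclusion on the copy of $W^*_p(G,A,\alpha)$. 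The obstruction sits in the copy of $PM_p(\hat G)$, which $\Phi$ maps onto the diagonal operators $M_f\otimes 1$ with $f\in\Gamma_p(PM_p(\hat G))$.
\end{itemize}

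What is missing is the intertwining used in the paper. After the reordering $\Phi_1$, the main diagonal of $\Phi(x)$ has $t$-th entry $\alpha_{t^{-1}}\bigl((\Gamma_p\otimes\mathrm{id}_A)(E_e(\Phi_1(x)))(t)\bigr)$, with $E_e$ as in Lemma~\ref{exp}. Suppose $\Phi(x)=M_f\otimes 1$ with $f\in l^\infty(G)\setminus\Gamma_p(PM_p(\hat G))$. Then $(\Gamma_p\otimes\mathrm{id}_A)(E_e(\Phi_1(x)))=f\otimes 1$. Slicing with a weak* continuous functional $\omega$ on $A$ with $\omega(1)=1$ puts $f$ in $\Gamma_p(PM_p(\hat G))$, a contradiction. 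The input is the non-surjectivity of the Gelfand transform $PM_p(\hat G)\to l^\infty(G)$ for $G$ infinite and $p\neq 2$, i.e.\ Theorem~\ref{G}(ii) applied to the compact group $\hat G$.

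Part (i) is also not established as written. The paper gives $W^*_p$ no universal property. Also, a norm-bounded homomorphism on a weak* dense subalgebra need not extend weak* continuously. So ``contractive by the universal construction'' and ``extend by weak* density'' are both unsupported.

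\begin{itemize}
\item What works is compactness of $\hat G$. Conjugate the regular representation on $L^p(\hat G,l^p(G)\otimes_pE)$ by the isometry $(U\xi)(\tau)=(V_\tau\otimes I_E)\xi(\tau)$. This turns $b$ into $1\otimes b$ and $\lambda_{\hat G}(\gamma)$ into $\lambda_{\hat G}(\gamma)\otimes(V_\gamma\otimes I_E)$.
\item With normalized Haar measure, the constant functions then form an invariant isometric copy of $l^p(G)\otimes_pE$ carrying your pair.
\item Set $\Phi(x)=Q\,UxU^{-1}\iota$, with $\iota$ the embedding of constants and $Q$ integration over $\hat G$. This is contractive and weak* continuous. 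It is multiplicative because invariance of the constants is a weak* closed condition.
\end{itemize}

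Likewise, ``Fourier coefficients determine elements since $\hat G$ is compact'' is true for functions. For elements of the weak* closure, however, it is the injectivity statement itself. You need the paper's reduction, via $\Phi_1$ and Lemma~\ref{inj}, to injectivity of $\Gamma_p\otimes\mathrm{id}_A$ on $PM_p(\hat G)\bar\otimes A$. That in turn uses norm density of $\{\sigma\circ\Gamma_p:\sigma\in l^1(G)\}$ in the predual $A_p(\hat G)$, together with slice maps.

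Two smaller points:
\begin{itemize}
\item Your $M_\gamma$ implements $\hat\alpha_{\gamma^{-1}}$, and in fact $(\mathrm{Ad}\rho_p)_t(M_\gamma)=\gamma(t)M_\gamma$. Use $V_\gamma$ (multiplication by $\overline{\gamma}$) throughout.
\item For $p=2$ only your second option is available, since $A$ need not be self-adjoint and Takesaki's theorem does not apply directly. The Plancherel unitary $L^2(\hat G)\to l^2(G)$ makes $\Gamma_2\otimes\mathrm{id}_A$ spatial, which is what gives (iii).
\end{itemize}
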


\begin{thm}\label{Thm2}
Let $p\in (1,\infty)$. Then $W^*_p(\hat{G},W^*_p(G,A,\alpha),\hat{\alpha})$ is weak* continuous isometric isomorphic to $\mathcal{B}(l^p(G))\bar{\otimes} A$ if and only if either $p=2$ or $G$ is trivial.
\end{thm}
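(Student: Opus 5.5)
The "if" direction follows from Theorem \ref{Thm1}(iii). If $p=2$ or $G$ is trivial, the map $\Phi$ constructed there is a weak* continuous isometric isomorphism. Its inverse is then automatically weak* continuous: both algebras are dual spaces with separable preduals, so the Krein--Smulian theorem applied to the bijective weak* continuous isometry gives weak* continuity of $\Phi^{-1}$. So the real content is the converse.

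For the "only if" direction, assume $p\neq 2$ and $G$ is nontrivial, and suppose $\Psi$ is a weak* continuous isometric isomorphism from $W^*_p(\hat{G},W^*_p(G,A,\alpha),\hat{\alpha})$ onto $\mathcal{B}(l^p(G))\bar{\otimes}A$. I would derive a contradiction by comparing invertible isometries, or more precisely hermitian/spectral structure, which isometric isomorphisms preserve.

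\begin{itemize}
\item \emph{Target side.} In $\mathcal{B}(l^p(G))\bar{\otimes}A$, for $p\neq 2$ the invertible isometries of $l^p$ are rigid: they are only weighted permutations, by Lamperti/Banach--Stone. The diagonal copy $l^\infty(G)\otimes I_A$ is a large algebra spanned by hermitian idempotents. It carries the rank-one coordinate projections $e_s\otimes I$, which are pairwise orthogonal hermitian idempotents of norm one, summing weak* to the unit.
\item \emph{Source side.} The double crossed product contains the canonical unitaries $u_\gamma$, $\gamma\in\hat{G}$, implementing $\hat{\alpha}$. Isometric isomorphisms preserve hermitian elements, since the numerical range is determined by the norm and the unit. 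So I would compute the hermitian idempotents of the iterated crossed product, in particular those lying in the weak* closed subalgebra generated by $\{u_\gamma\}$, which is a copy of the $p$-pseudomeasure algebra $PM_p(\hat{G})$, together with the copy of $W^*_p(G,A,\alpha)$.
\end{itemize}

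The key point to establish is that when $p\neq 2$ and $G\neq\{e\}$, the source algebra does not contain a family of pairwise orthogonal nonzero hermitian idempotents, each of norm one, that behaves like $\{e_s\otimes I\}$. Equivalently, the relevant Fourier-type idempotents $\frac{1}{|G|}\sum_t \gamma(t)\lambda_t$ fail to be contractive, or fail to be hermitian, on $l^p$. This is the $l^p$ Fourier transform failing to be isometric, and the same phenomenon used in \cite{WZ} for the Takai case.

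To make this concrete, I would reduce to $A=\mathbb{C}$, or to a matrix corner, by compressing with a suitable hermitian idempotent of $A$. Isometric isomorphisms preserve the property of being a hermitian idempotent, and they preserve the corners cut out by such idempotents, so the compressed map is again an isometric isomorphism. In that case the source is built from $PM_p(G)$ and $PM_p(\hat{G})$ acting on $l^p(G)$, and one compares it with $\mathcal{B}(l^p(G))$.

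The main obstacle is controlling the unknown isometry $\Psi$, which need not equal $\Phi$. To handle it, I would use that for $p\neq 2$ hermitian operators on $l^p$-spaces are exactly the real multiplication operators. This is a classical result (Tam, Lumer). It gives two things:
\begin{itemize}
\item the hermitian part of the target is $l^\infty(G,\mathbb{R})\otimes(\text{hermitians of }A)$, a commutative set that spans a maximal abelian algebra;
\item the hermitian part of the source can be computed through the injective contractive map $\Phi$ of Theorem \ref{Thm1} together with the explicit form of the crossed product representation.
\end{itemize}

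I would show that the source's hermitians generate a strictly "smaller" algebra: specifically, one whose relative commutant differs from the target's. This contradicts $\Psi$ being an isometric isomorphism.
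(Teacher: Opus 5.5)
The proposal has a genuine gap: the hermitian elements of the iterated crossed product are never determined, and the routes sketched for this do not work.

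You pick the same invariant as the paper: the hermitian elements, i.e.\ the $C^*$-core, which an isometric isomorphism must carry $*$-isomorphically onto the $C^*$-core. You also have the right tool (hermitian operators on $L^p$, $p\neq 2$, are real multiplication operators), and your ``if'' direction is fine. But the ``only if'' direction never determines the hermitian elements of $S=W^*_p(\hat{G},W^*_p(G,A,\alpha),\hat{\alpha})$.

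\emph{Going through $\Phi$.} This yields only one inclusion. A unital contractive homomorphism maps hermitians to hermitians, so $\Phi(\mathrm{core}(S))\subseteq\mathrm{core}(\mathcal{B}(l^p(G))\bar{\otimes}A)$, but the converse fails. Already for $A=\mathbb{C}$ and finite $G$, $\Phi$ is bijective, yet $\Phi^{-1}(e_{s,s})$ is $\Phi_1^{-1}$ of the canonical copy of a minimal idempotent of $PM_p(\hat{G})$. That idempotent is a non-diagonal rank-one projection on $l^p(\hat{G})$, so it is not hermitian for $p\neq 2$.

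\emph{Looking inside subalgebras.} Inspecting hermitian idempotents inside $PM_p(\hat{G})$ and $W^*_p(G,A,\alpha)$ does not bound the hermitians of all of $S$.

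\emph{Compressing to $A=\mathbb{C}$.} This fails on three counts. First, $A$ need not contain a hermitian idempotent $e$ with $eAe=\mathbb{C}e$, since its core may be $\mathbb{C}$ or diffuse. Second, $e$ need not be $\alpha$-invariant; as $A$ sits in $S$ via $a\mapsto\sum_t e_{t,t}\otimes\alpha_{t^{-1}}(a)$, the corner is then not an iterated crossed product of $eAe$. Third, $\Psi(e)$ is an unknown projection in $l^\infty(G)\bar{\otimes}\mathrm{core}(A)$, so the target corner is not $\mathcal{B}(l^p(G))\otimes eAe$.

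\emph{Your stated key point.} As far as it is made precise, it is false when $\mathrm{core}(A)$ is large. For $A=l^\infty(\mathbb{N})$, the source contains infinitely many pairwise orthogonal norm-one hermitian idempotents summing weak* to $1$. Also, the target's hermitians need not generate a maximal abelian subalgebra; this fails whenever $\mathrm{core}(A)$ is not maximal abelian in $A$. So the closing ``relative commutant'' contrast is neither formulated nor proved.

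What is missing is the paper's Proposition~\ref{core1}: for $p\neq 2$ and discrete $G$, the core of $W^*_p(G,B,\beta)$ equals that of $B$. The argument runs as follows.
\begin{enumerate}
\item A hermitian $f$ is a real multiplication operator on the underlying $L^p$-space, so $E_e(f\lambda_p^E(t))=0$ for $t\neq e$, where $E_e$ is the weak* continuous expectation of Lemma~\ref{exp}.
\item Since $E_e(f)\in B$, this gives $E_t(f-E_e(f))=0$ for all $t$, hence $f=E_e(f)$ by Lemma~\ref{inj}.
\item Apply this after the isometric identification $\Phi_1$, with $B=PM_p(\hat{G})\bar{\otimes}A$. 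Combine it with Lemma~\ref{core2}: a real multiplication operator in $PM_p(\hat{G})=CV_p(\hat{G})$ commutes with translations, so it is constant. This gives $\mathrm{core}(S)\cong\mathrm{core}(A)$.
\item On the other side, $\mathrm{core}(\mathcal{B}(l^p(G))\bar{\otimes}A)=l^\infty(G)\bar{\otimes}\mathrm{core}(A)$ by Proposition~\ref{c1}.
\item $\Psi$ would have to restrict to a $*$-isomorphism between these two cores. A dimension count rules this out whenever $\mathrm{core}(A)$ is finite-dimensional, e.g.\ for $A=M_n^p$.
\end{enumerate}

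Be aware of a limitation of this comparison. If algebras with infinite-dimensional core are admitted, say $A=L^\infty[0,1]$ acting on $L^p[0,1]$ with $G=\mathbb{Z}$, the two cores are abstractly isomorphic. There a finer invariant really is needed. A correctly formulated relative-commutant comparison of the kind you gesture at could serve, but it would have to be stated and proved.
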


\begin{thm}\label{Thm3}
Let $p\in (1,\infty)$ and let $A=M_n^p$. Then $W^*_p(\hat{G},W^*_p(G,A,\alpha),\hat{\alpha})$ is weak* continuous isomorphic to $\mathcal{B}(l^p(G))\bar{\otimes} A$ if and only if either $p=2$ or $G$ is finite.
\end{thm}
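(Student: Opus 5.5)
Theorem \ref{Thm3} combines Theorem \ref{Thm1} with an obstruction coming from the invertible elements of $\mathcal{B}(l^p(G))$ when $G$ is infinite and $p\neq 2$. For sufficiency: if $p=2$ or $G$ is finite, Theorem \ref{Thm1}(i),(ii) shows that $\Phi$ is an injective, surjective, weak* continuous homomorphism. I would then argue that its inverse is bounded and weak* continuous. Boundedness follows from the open mapping theorem. For weak* continuity, when $G$ is finite everything is finite dimensional, since $\hat G$ is finite and $A=M_n^p$; when $p=2$ it follows from Theorem \ref{Thm1}(iii). (Alternatively, the Krein--Smulian theorem gives weak* continuity of $\Phi^{-1}$, since $\Phi$ is a weak* continuous bijection between dual Banach spaces whose preadjoint is surjective.)

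For necessity, I would assume that $p\neq 2$, that $G$ is infinite, and that there is a weak* continuous isomorphism $\Psi:W^*_p(\hat{G},W^*_p(G,A,\alpha),\hat{\alpha})\to \mathcal{B}(l^p(G))\bar{\otimes} M_n^p=\mathcal{B}(l^p(G\times\{1,\dots,n\}))$. The plan is to find a structural invariant that distinguishes the iterated crossed product from $\mathcal{B}(l^p)$. Here is where the concrete structure of the iterated crossed product enters. Since $G$ is countable discrete abelian, $\hat G$ is compact, and the iterated crossed product is generated by the following pieces:
\begin{itemize}
\item a copy of $A$;
\item the unitaries $u_s$, $s\in G$;
\item the $p$-pseudomeasure algebra of $\hat G$, which is a weak* closed algebra of Fourier-multiplier-type operators.
\end{itemize}
By Theorem \ref{Thm1}, $\Phi$ embeds the iterated crossed product injectively into $\mathcal{B}(l^p(G))\bar\otimes A$, and its image is the weak* closure of an algebra generated by the following pieces:
\begin{itemize}
\item $A$;
\item the translations $\lambda_s\otimes 1$;
\item the multiplication operators by characters $\gamma\in\hat G$ on $l^p(G)$ (in the dual picture, convolution by $L^1$/pseudomeasures on $\hat G$).
\end{itemize}
The proof of (ii) in Theorem \ref{Thm1} presumably shows that this image misses some operator, for instance a rank-one operator $e_{s,t}\otimes a$ that is not obtained from $p$-multipliers when $p\neq 2$. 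This fails because, when $G$ is infinite, bounded Fourier multipliers on $L^p(\hat G)$ are a proper subclass.

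The main obstacle is that an abstract isomorphism $\Psi$ need not be $\Phi$. I would try to transport an intrinsic property. In $\mathcal{B}(l^p(\mathbb N))$, every weak* closed maximal abelian algebra of idempotents, meaning the span of the minimal idempotents, behaves like $l^\infty$ of rank-one projections, and it contains minimal idempotents. By contrast, in the iterated crossed product for $A=M_n^p$, the weak* closed subalgebra generated by the spectral projections coming from $\hat G$ (the image of $l^\infty(G)$ under Gelfand transform) and $A$ should be shown to have the property that its minimal idempotents generate, in the weak* topology, only the image of $\Phi$. Pulling back the rank-one idempotents $\Psi^{-1}(e_{x,x})$ and showing that they must be of the form given by $\Phi$, up to similarity, would force $\Psi\circ\Phi^{-1}$ to be implemented by a similarity. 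That similarity would carry $\Phi$'s image, a proper weak* closed subalgebra, onto all of $\mathcal{B}(l^p)$, which is impossible. I would first try to prove this via the finite-dimensional center/corner structure: compressions by minimal idempotents $e$ satisfy $e\mathcal{B}e\cong\mathbb{C}$, and $\mathcal{B}(l^p)$ is the weak* closed span of $e\mathcal{B}f$ for such $e$ and $f$. I would check whether the same spanning property fails in the crossed product exactly when $p\neq2$ and $G$ is infinite, using the failure of surjectivity in Theorem \ref{Thm1}(ii).
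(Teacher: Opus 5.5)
Your sufficiency direction is fine, and the Krein--Smulian remark, which gives weak* continuity of $\Phi^{-1}$, is a good addition. The necessity direction, however, has a genuine gap, and the picture of $\mathrm{ran}\,\Phi$ it rests on is wrong.

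\textbf{What the range of $\Phi$ actually is.} The range of $\Phi$ contains every matrix unit $e_{s,t}\otimes a$. The Fourier transform of the trigonometric polynomial $\gamma\mapsto\gamma(t)$ on $\hat G$ is $\delta_t$, so $\Phi$ hits $e_{t,t}\otimes a$. Multiplying by $\lambda_p(s)\otimes I_E$, the image of a translation, gives $e_{st,t}\otimes a$. Hence $\mathrm{ran}\,\Phi$ is weak* dense, and the following consequences hold:
\begin{itemize}
\item $\mathrm{ran}\,\Phi$ is neither a proper weak* closed subalgebra nor ``the weak* closure'' of the algebra generated by $A$, translations and characters. Either would make $\Phi$ onto, contradicting Theorem \ref{Thm1}(ii).
\item It does not miss any $e_{s,t}\otimes a$. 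What it misses are diagonal operators such as $M_f\otimes I$ with $f\in l^\infty(G)$ not a Fourier multiplier of $L^p(\hat G)$.
\item Your proposed invariant cannot separate the two algebras. The preimages under $\Phi$ of the matrix units are minimal idempotents of $W^*_p(\hat{G},W^*_p(G,A,\alpha),\hat{\alpha})$ with one-dimensional corners whose span is weak* dense, exactly as in $\mathcal{B}(l^p(G))\bar{\otimes}M_n^p$. So the check you plan would come out negative.
\end{itemize}
The decisive step, that $\Psi\circ\Phi^{-1}$ is implemented by a similarity, is only stated as a hope. The real difference between the algebras is a norm phenomenon: $\Phi$ is injective but not bounded below.

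\textbf{The missing idea.} The paper uses $p$-incompressibility. By Johnson--Phillips, $\mathcal{B}(l^p(G))\bar{\otimes}M_n^p\cong\mathcal{B}(l^p(G\times\{1,\dots,n\}))$ is $p$-incompressible, and this property passes along any bounded isomorphism. The iterated crossed product is not $p$-incompressible, because $\Phi$ is an injective bounded homomorphism into $\mathcal{B}(l^p(G)\otimes_p l_n^p)$ that is not bounded below. Indeed, a weak* continuous bounded-below map has weak* closed range (its preadjoint is onto), whereas $\mathrm{ran}\,\Phi$ is weak* dense and proper. This excludes every Banach algebra isomorphism, weak* continuous or not.

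\textbf{Repairing your similarity idea.} It can be made to work, giving a self-contained alternative that uses weak* continuity instead of Johnson--Phillips. You must work with $\theta=\Phi\circ\Psi^{-1}$, not $\Psi\circ\Phi^{-1}$. Set $X=G\times\{1,\dots,n\}$; then $\theta$ is defined on all of $\mathcal{B}(l^p(X))$ and is weak* continuous by your own Krein--Smulian argument.
\begin{itemize}
\item Since $\theta$ has weak* dense range, $P_x=\theta(e_{x,x})$ satisfies $P_x\mathcal{B}(l^p(X))P_x=\mathbb{C}P_x$, so $P_x$ has rank one.
\item Fix $x_0$ and $0\neq\eta_0\in\mathrm{ran}\,P_{x_0}$. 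Let $U\xi=\theta(R_\xi)\eta_0$, where $R_\xi\zeta=\zeta(x_0)\xi$ has norm $\|\xi\|_p$. Then $U$ is bounded.
\item Define $L\zeta=(c_x)_x$ by $\theta(e_{x_0,x})\zeta=c_x\eta_0$. $L$ is bounded because $\|\sum_x d_x e_{x_0,x}\|=\|d\|_q$.
\item One checks $LU=I$ directly, and $UL=\sum_x P_x=\theta(I)=I$, which uses weak* continuity. So $U$ is invertible with inverse $L$.
\item $\theta(T)=UTU^{-1}$ holds on matrix units, hence for all $T$ by weak* continuity.
\end{itemize}
Then $\theta$, and hence $\Phi$, would be surjective, contradicting Theorem \ref{Thm1}(ii).
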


\begin{rmk} This shows that  Takesaki duality theorem of von-Neumann algebras can be generalized to weak* closed $L^2$-operator algebras, but this theorem can not be generalized to  weak* closed $L^p$-operator algebras when $p\in (1,\infty)\setminus\{2\}$.
\end{rmk}

The paper is organized as follows.
In Section 2, we introduced crossed products of weak* closed $L^p$-operator algebras, and we give some basic properties on this crossed products.
In Section 3, we define the dual action $\hat{\alpha}$ of $\widehat{G}$ on $W^*_p(G,A,\alpha)$. In Section 4, we study $C^*$-cores of weak*closed $L^p$-operator algebras. This will have important applications in proving Theorem \ref{Thm2}. In Section 5, we explore the Gelfand transform of $p$-pseudomeasure algebra $PM_p(G)$ when $G$ is a locally compact abelian group. In Section 6, we give the proof of Theorem \ref{Thm1}. In Section 7, we will prove Theorem \ref{Thm2} and Theorem \ref{Thm3}.

In the end of this section, we recall some basic notations. We always let $p\in (1,\infty)$ and denoted by $q$ be its conjugate exponent
of $p$, that is, $\frac{1}{p}+\frac{1}{q}=1$. If $E$ is an $L^p$-space, then we let $E'$ be its dual space, and we let $\la .,.\ra:E'\times E\rightarrow \mathbb{C}$ be the dual pair.
If $T:E\rightarrow F$ is a bounded linear map between two $L^p$-spaces, then we denote by $T':F'\rightarrow E'$ be its dual operator.

\section{Crossed products of weak* closed $L^p$-operator algebras}\label{crossed}
In this section, we introduce crossed products of weak* closed $L^p$-operator algebras and their basis properties.

Let $E$ be a $\sigma$-finite separable $L^p$-space and $E'$ be the dual space of $E$.
Let $\mathcal{N}(E)=E'\widehat{\otimes}E$ denote the space of nuclear operator operators on $E$, where $\widehat{\otimes}$ is the projective tensor product. Then $\mathcal{B}(E)$ is the dual space of $\mathcal{N}(E)$ by way of dual paring $$\la T,\xi\otimes \eta\ra=\la\xi,T\eta\ra$$ where $\xi\in E'$ and $\eta\in E$.
We say that a net $(T_\alpha)$ in $\mathcal{B}(E)$ converges weak* to an operator $T$ in $\mathcal{B}(E)$ if $$\la T_\alpha,\omega\ra\rightarrow \la T,\omega\ra $$ for all $\omega\in E'\widehat{\otimes} E$.

A {\it weak* closed $L^p$-operator algebra $A$} is a subalgebra of $\B(E)$ which is weak* closed in $\mathcal{B}(E)$.
Let $\mathrm{Aut}(A)$ be the group of isometric automorphisms of $A$. We consider the pointwise weak* topology on $\mathrm{Aut}(A)$, that is, a net $\varphi_\lambda\in \mathrm{Aut}(A)$ converges to $\varphi_0\in \mathrm{Aut}(A)$ if $$\la\varphi_\lambda(a)-\varphi_0(a),\omega\ra\rightarrow 0$$ for all $a\in A$ and $\omega\in \mathcal{N}(E)$. A {\it weak* closed $L^p$-operator algebra dynamical system} is a triple $(G,A,\alpha)$ consisting of a locally compact group $G$, a weak* closed $L^p$-operator algebra $A$, and a weak* continuous isometric action of $G$ on $A$, that is, $\|\alpha_t(a)\|=\|a\|$ and $\alpha:G\rightarrow \mathrm{Aut}(A)$ is continuous, which means that for each $a\in A$, the function $$\alpha(a):G\rightarrow A;t\mapsto\alpha_t(a)$$ is continuous on $G$.

Let $G$ be a locally compact group, then there exists a left Haar measure $\mu$ on $G$.
Denote by $C_{c}(G,A,\alpha)$ the vector space of continuous compactly supported functions $G\rightarrow A$, made into an algebra over $\mathbb{C}$ with product given by twisted convolution, that is, $$(f*g)(t):=\int_{G}f(s)\alpha_{s}(g(s^{-1}t))d\mu(s)$$ for all $f,g\in C_{c}(G,A,\alpha)$ and $t\in G$.

Given two measure spaces $(X,\mu)$ and $(Y,\nu)$, we denote by $L^p(X,\mu)\otimes_p L^p(Y,\nu)$ the $L^p$-tensor product, which can be identified with $L^p(X\times Y,\mu\times\nu)$ via $\xi\otimes\eta(x,y)=\xi(x)\eta(y)$ for $\xi\in L^p(X,\mu)$ and $\eta\in L^p(Y,\nu)$.

Let $A\subset\mathcal{B}(E)$ be a weak* closed subalgebra, and
we denote by $\pi_0:A\rightarrow \mathcal{B}(E)$ be the canonical representation of $A$. Its associated {\it regular covariant representation} is the pair $(\pi,\lambda_p^E)$ given by
$$\pi(a)(\xi)(s):=\pi_{0}\left(\alpha_{s^{-1}}(a)\right)(\xi(s)) \quad$$ and $$\lambda_p^{E}(s)(\xi)(t):=\xi(s^{-1}t)$$
for all $a\in A$, $\xi\in L^{p}(G,E)\cong L^p(G)\otimes_p E$, and $s,t\in G$.
For any $t\in G$ and $a\in A$, one can check that $$\lambda_p^E(t)\pi(a)\lambda_p^E(t^{-1})=\pi(\alpha_t(a)).$$

The {\it integrated form} of $(\pi,\lambda_p^E)$ is an isometric representation $\pi\rtimes\lambda_p^E:C_c(G,A,\alpha)\rightarrow \mathcal{B}(L^p(G)\otimes_p E))$ given by $$\pi\rtimes\lambda_p^E(f):=\int_G \pi(f(t))\lambda_p^E(t)d\mu(t)$$ for all $f\in C_c(G,A,\alpha)$.

\begin{defn}
Let $A\subset \B(E)$ be a weak* closed subalgebra and let $\alpha$ be a weak* continuous isometric action of $G$ on $A$. The weak* closed $L^p$-operator crossed product of $A$ by the action of $\alpha$ is the weak* closure of the algebra generated by the operators $\pi\rtimes\lambda_p^E(f)$ in $\mathcal{B}(L^p(G)\otimes_p E))$ for all $f\in C_c(G,A,\alpha)$, and it is denoted by $W^*_p(G,A,\alpha)$.
\end{defn}

\begin{rmk}
Since the crossed products of von-Neumann algebras were used to denote by $W^*(M,G)$ (see \cite{van}), we define the weak* closed $L^p$-operator crossed product by $W^*_p(G,A,\alpha)$. If $A\subset\mathcal{B}(H)$ is a von Neumann algebra, then $W^*_2(G,A,\alpha)=A\rtimes_\alpha G$, where $H$ is a Hilbert space.
\end{rmk}

The pseudomeasure algebras are special case of weak* closed $L^p$-operator crossed product.
We let $\lambda_p:G\rightarrow \mathcal{B}(L^p(G))$ and $\rho_p:G\rightarrow \mathcal{B}(L^p(G))$ denote the  left and right regular representation, respectively, which are give by:
$$\lambda_p(s)g(t)=g(s^{-1}t),~~\rho_p(s)g(t)=g(ts)$$
for all $g\in L^p(G)$ and $s,t\in G$.

\begin{defn}
\begin{enumerate}
\item[(i)] The reduced group $L^p$-operator algebra of $G$, denoted $F^p_\lambda(G)$, is the completion of $C_c(G)$ with respect to the norm $\|\lambda_p(f)\|.$
\item[(i)] The $p$-pseudomeasure algebra of $G$, denoted $PM_p(G)$, is the weak* closure of $F^p_\lambda(G)$ in $\mathcal{B}(L^p(G))$.
\item[(ii)] The $p$-convoluters of $G$, denoted $CV_p(G)$, is defined as follows: $$CV_p(G)=\{T\in \mathcal{B}(L^p(G)):T\rho_p(s)=\rho_p(s)T \text{ for all } s\in G\}$$
\end{enumerate}
\end{defn}
\begin{rmk}
One famous problem is to ask whether $PM_p(G)=CV_p(G)$ for all groups $G$. M. Daws and N. Spronk proved that $PM_p(G)=CV_p(G)$ if $G$ has the approximation property (see \cite[Theorem 1.1]{Daws}). Since abelian groups have the approximation property, it follows that  $PM_p(G)=CV_p(G)$ when  $G$ is abelian.
\end{rmk}
If $A=\mathbb{C}$, and let $\mathrm{id}$ be the trivial action of $G$ on $A$, then we have $W^*_p(G,\mathbb{C},\mathrm{id})=PM_p(G)$.
When $p=2$, we have that $PM_2(G)=CV_p(G)=L(G)$, where $L(G)$ is the group von Neumann algebra of $G$.

For each positive integer $n$, {denote $l_n^p=L^p(\{1,2,\cdots,n\},\nu)$, where $\nu$ is the counting measure on $\{1,2,\cdots,n\}$.
We denote $M_n^p=\mathcal{B}(l_n^p)$}.
Given a norm closed subalgebra $A$ of $\mathcal{B}(L^p(X,\mu))$, we denote by $M_n^p\otimes_p A$ the {\it $L^p$-matrix algebra}, that is, the Banach subalgebra of $\mathcal{B}(L^p(\{1,2,\cdots,n\}\times X,\nu\times\mu))$ generated by all $T\otimes a$'s for $T\in M_n^p$ and $a\in \mathcal{B}(L^p(X,\mu))$. Clearly, each element of $M_n^p\otimes_p A$ is of form $[a_{i,j}]_{1\leq i,j\leq n}$ with $a_{i,j}\in A$,
which is also written as $\sum_{i,j=1}^n e_{i,j}\otimes a_{i,j}$, where $\{e_{i,j}\}_{1\leq i,j\leq n}$ are the canonical matrix units of $M_n^p$.

\begin{defn} %\cite{Lp AF}
Let $A$ be a normed closed subalgebra of $\mathcal{B}\left(L^p(X,\mu)\right)$, $B$ be a closed subalgebra of $\mathcal{B}\left(L^p(Y,\nu)\right)$
and $\varphi$ be a linear map $\varphi: A\rightarrow B$.  We denote by $\varphi_n$ the map from $M_n^p\otimes_p A$ to $M_n^p\otimes_p B$ defined by $$\varphi_n\left(\sum_{i,j=1}^n e_{i,j}\otimes a_{i,j}\right)=\sum_{i,j=1}^n e_{i,j}\otimes \varphi(a_{i,j}) $$ for
$\sum_{i,j=1}^n e_{i,j}\otimes a_{i,j}\in M_n^p\otimes_p A$.
We denote $$\|\varphi\|_{pcb}=\sup_{n\in \mathbb{Z}_{>0}}\| \varphi_n\|.$$
We say that $\varphi$ is {\it $p$-completely bounded} if $\|\varphi\|_{pcb}\leq C$ for some positive constant $C$, say that $\varphi$ is {\it $p$-completely contractive} if $\|\varphi\|_{pcb}\leq 1$, and say that $\varphi$ is {\it $p$-completely isometric} if $\varphi_n$ is isometric for all positive integer  $n$.
\end{defn}

\begin{defn}[{see \cite{Daws}}]
Let $E_1, E_2$ be two $L^p$-spaces, and let $A\subset\mathcal{B}(E_1), B\subset\mathcal{B}(E_2)$ be two weak* closed subalgebra. We define $A\bar{\otimes}B$ to be the weak* closure of $A\otimes_{alg} B$ in $\mathcal{B}(E_1\otimes_p E_2)$.
\end{defn}

\begin{lem} \label{tensor}
Let $E_3$ be an $L^p$-space, and let $C\subset\mathcal{B}(E_3)$ be a weak* closed subalgebra.
 If $\varphi:A\rightarrow B$ is a $p$-completely contractive weak* continuous linear map, then there exists a $p$-completely contractive weak* continuous linear map $\hat{\varphi}:A\bar{\otimes}C\rightarrow B\bar{\otimes}C$ such that $\hat{\varphi}(a\otimes c)=\varphi(a)\otimes c$ for all $a\in A, c\in C$ and $\|\hat{\varphi}\|_{pcb}\leq\|\varphi\|_{pcb}$. If $\varphi$ is $p$-completely isometric weak* continuous, then so is $\hat{\varphi}$. Furthermore, if $\varphi$ is a homomorphism, then so is $\hat{\varphi}$.
\end{lem}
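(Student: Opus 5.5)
The plan is to build $\hat{\varphi}$ by duality: construct a bounded map on the preduals, take its adjoint, and check that the adjoint does the right thing on elementary tensors. Write $E_1=L^p(X)$, $E_2=L^p(Y)$, $E_3=L^p(Z)$. The basic object is the algebraic map $\varphi\otimes\mathrm{id}_C: A\otimes_{alg}C\to B\otimes_{alg}C$. We need to show it is contractive for the operator norms on $\mathcal{B}(E_1\otimes_p E_3)$ and $\mathcal{B}(E_2\otimes_p E_3)$, and that it extends weak* continuously to the weak* closures.

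\textbf{Step 1: contractivity on the algebraic tensor product.} Take $x=\sum_k a_k\otimes c_k$. Its norm is the supremum of $|\langle \eta, x\xi\rangle|$ over unit vectors $\xi\in E_1\otimes_p E_3$ and $\eta\in (E_1\otimes_p E_3)'$. Since $E_3$ is $\sigma$-finite, we may approximate $\xi$ and $\eta$ by finite sums $\sum_{i=1}^n \xi_i\otimes 1_{Z_i}$ over a partition of $Z$ into sets $Z_i$ of finite measure, with matching simple functions for $\eta$. Compressing $x$ to this finite partition gives an $n\times n$ matrix with entries in $A$, namely $\sum_k a_k\otimes P c_k P$ with $P$ the conditional expectation onto $\mathrm{span}\{1_{Z_i}\}$. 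Here I would use the standard fact that such an averaging projection is contractive on $L^p$, and that its range is isometric to $l^p_n$ with weights. Applying $\varphi_n$ and using $\|\varphi_n\|\le\|\varphi\|_{pcb}$ bounds the compressed pairings uniformly. Passing to the limit gives $\|(\varphi\otimes\mathrm{id})(x)\|\le\|\varphi\|_{pcb}\|x\|$. The same argument with $M_m^p\otimes_p(\cdot)$ adjoined gives the $p$-cb estimate.

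\textbf{Step 2: weak* extension.} This is the main obstacle, because a norm bound on $A\otimes_{alg}C$ does not by itself extend weak* continuously to the weak* closure. I will instead use the predual. The map $\varphi$ is weak* continuous, so $\varphi=\psi^*$ for some bounded $\psi: B_*\to A_*$, where $A_*=\mathcal{N}(E_1)/A_\perp$. Define $\Psi$ on $\mathcal{N}(E_2\otimes_p E_3)$ restricted to elementary nuclear functionals $(\eta_2\otimes\eta_3)\otimes(\xi_2\otimes\xi_3)$ by
$$\langle a\otimes T,\Psi(\omega)\rangle=\langle \varphi(a)\otimes T,\omega\rangle .$$
To extend $\Psi$ boundedly I would use the slice map $R_\omega: \mathcal{B}(E_1)\bar\otimes\mathcal{B}(E_3)\to\mathcal{B}(E_1)$ obtained by slicing with $\eta_3\otimes\xi_3$. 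The functional $\omega$ then becomes $y\mapsto \langle \psi(\text{slice}),\ldots\rangle$, and the Step 1 estimate bounds the norm of $\Psi$ on finite sums. Setting $\hat{\varphi}=\Psi^*$ restricted to $A\bar\otimes C$ gives a weak* continuous map with $\hat{\varphi}(a\otimes c)=\varphi(a)\otimes c$. By weak* continuity and density, its range lies in $B\bar\otimes C$. Its norm equals $\|\Psi\|\le\|\varphi\|_{pcb}$ by Step 1, since the norming functionals come from finite sums. The same holds at every matrix level, because $M_m^p\otimes_p(A\bar\otimes C)=(M_m^p\otimes_p A)\bar\otimes C$.

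\textbf{Step 3: the remaining claims.} Suppose $\varphi$ is $p$-completely isometric. Then on $A\otimes_{alg}C$ the Step 1 argument runs in both directions: the compressions satisfy $\|\varphi_n(y)\|=\|y\|$, so $\hat{\varphi}$ is isometric there. For a general $x$ in the weak* closure, take a bounded net $x_\lambda\to x$ weak*; such a net exists by Kaplansky-type density, which can be obtained by slicing or compression. Then $\|x\|\le\liminf\|x_\lambda\|$ together with weak* lower semicontinuity of the norm gives $\|\hat{\varphi}(x)\|\ge\|x\|$, using that the compressions of $x$ and of $\hat\varphi(x)$ correspond under $\varphi_n$. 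Alternatively, one can work directly via compressions: $\|x\|=\sup_P\|PxP\|$ with $PxP\in M_n^p\otimes_p A$, and $\hat\varphi(PxP)=P\hat\varphi(x)P$. Finally, if $\varphi$ is a homomorphism, then $\hat{\varphi}$ is multiplicative on elementary tensors and hence on $A\otimes_{alg}C$. Multiplication is separately weak* continuous, so multiplicativity extends to the weak* closure in two steps: first fix one variable in the algebraic tensor product, then the other.
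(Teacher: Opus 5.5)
You take a different, self-contained route from the paper. The paper quotes Daws' Theorem 6.4 for the normal extension of $\varphi\otimes\mathrm{id}$ and then replaces $C$ by $M_n^p\bar\otimes C$. Your compression along finite partitions of $Z$ is the right tool, but the decisive estimate in Step 2 is not established.

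Step 1 bounds $\varphi\otimes\mathrm{id}$ only on $A\otimes_{alg}C$. So it gives $|\langle x,\Psi(\omega)\rangle|\le\|\varphi\|_{pcb}\|x\|\,\|\omega\|$ only for algebraic $x$. The norm of $\Psi(\omega)$ in $(A\bar\otimes C)_*$, however, is a supremum over the unit ball of $A\bar\otimes C$, and a weak* dense subspace need not be norming. ``The norming functionals come from finite sums'' does not address this. Without it, $\Psi$ is not known to be bounded, and $\hat\varphi=\Psi^*$ is not defined.

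The Kaplansky-type density you invoke in Step 3 cannot come from compression when $C$ is general. Indeed $(I\otimes P)(a\otimes c)(I\otimes P)=a\otimes PcP$, and $PcP\notin C$ in general.

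The repair applies your own tool to elements of the weak* closure rather than to algebraic tensors. For $x\in A\bar\otimes C$, and even for $x\in A\bar\otimes\mathcal{B}(E_3)$, the compression $x_P=(I\otimes P)x(I\otimes P)$ lies in $A\otimes\mathcal{B}(\mathrm{ran}P)\cong M_n^p\otimes_p A$. This holds because that set is weak* closed and contains the compressions of algebraic tensors. Suppose the $E_3$-components of $\omega$ are simple with respect to the partition defining $P$. Then $\langle x,\Psi(\omega)\rangle=\langle x_P,\Psi(\omega)\rangle=\langle\varphi_n(x_P),\omega\rangle$, which is bounded by $\|\varphi\|_{pcb}\|x\|\,\|\omega\|$. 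Such $\omega$ are dense in $\mathcal{N}(E_2\otimes_pE_3)$, so $\Psi$ is bounded. Equivalently, construct $\hat\varphi$ first on $A\bar\otimes\mathcal{B}(E_3)$, where compressions stay in $A\otimes_{alg}\mathcal{B}(E_3)$ and give a genuine bounded weak* approximation. Then restrict to $A\bar\otimes C$.

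Your first argument for the isometric case is invalid. Weak* lower semicontinuity gives $\|\hat\varphi(x)\|\le\liminf\|\hat\varphi(x_\lambda)\|$, which is an upper bound, so it can never yield $\|\hat\varphi(x)\|\ge\|x\|$. Your ``alternative'' is the correct argument. By weak* continuity, $(I\otimes P)\hat\varphi(x)(I\otimes P)=\varphi_n(x_P)$, hence $\|\hat\varphi(x)\|=\sup_P\|\varphi_n(x_P)\|=\sup_P\|x_P\|=\|x\|$, and the same holds at every matrix level.

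A minor point: if $Z$ has infinite measure, the partitions must discard a remainder set on which $P$ vanishes, and must exhaust $Z$ along the net. $P$ is still a contraction. The homomorphism part of your argument is fine.
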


\begin{proof}
By \cite[Theorem 6.4]{Daws1}, it follows that $\hat{\varphi}$ is contractive weak* continuous. If we replace $C$ by $M_n^p\bar{\otimes} C$, this implies that $\hat{\varphi}$ is $p$-completely contractive. It is easy to check the rest of this lemma.
\end{proof}

\begin{lem}\label{tensor3}
Let $(G,A,\alpha)$ be a weak* $L^p$-operator algebra dynamical system, where $G$ is a countable discrete group, $A\subset \mathcal{B}(E)$ be a weak* closed subalgebra, and $\alpha:G\rightarrow \mathrm{Aut}(A)$. Then $W^*_p(G,A,\alpha)\subset \mathcal{B}(l^p(G)\bar{\otimes}A$.
\end{lem}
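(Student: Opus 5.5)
Since $W^*_p(G,A,\alpha)$ is by definition the weak* closure of the algebra generated by the operators $\pi\rtimes\lambda_p^E(f)$, $f\in C_c(G,A,\alpha)$, and $\mathcal{B}(l^p(G))\bar{\otimes}A$ is weak* closed in $\mathcal{B}(l^p(G)\otimes_p E)$, it suffices to show two things. First, for every $a\in A$ the operator $\pi(a)$ lies in $\mathcal{B}(l^p(G))\bar{\otimes}A$. Second, for every $s\in G$ the operator $\lambda_p^E(s)$ lies in $\mathcal{B}(l^p(G))\bar{\otimes}A$. Since $G$ is discrete, every $f\in C_c(G,A,\alpha)$ is a finite sum $\sum_t \delta_t\otimes f(t)$. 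Hence $\pi\rtimes\lambda_p^E(f)=\sum_t\pi(f(t))\lambda_p^E(t)$ is a finite sum of products of such operators. We then conclude because $\mathcal{B}(l^p(G))\bar{\otimes}A$ is an algebra: it is the weak* closure of an algebra, and multiplication is separately weak* continuous.

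The second point is easy. We have $\lambda_p^E(s)=\lambda_p(s)\otimes I_E$, and $I_E=I_A\in A$ by the standing unital assumption, so $\lambda_p^E(s)\in\mathcal{B}(l^p(G))\otimes_{alg}A$.

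For the first point, write $e_{t,t}\in\mathcal{B}(l^p(G))$ for the coordinate projection onto $\delta_t$. Then $\pi(a)$ acts on $\xi\in l^p(G,E)$ by $(\pi(a)\xi)(t)=\alpha_{t^{-1}}(a)\xi(t)$, so it is the diagonal operator
\begin{equation*}
\pi(a)=\sum_{t\in G} e_{t,t}\otimes\alpha_{t^{-1}}(a).
\end{equation*}
Fix an enumeration $t_1,t_2,\dots$ of the countable group $G$. Set $T_n=\sum_{k\le n}e_{t_k,t_k}\otimes\alpha_{t_k^{-1}}(a)$, which lies in $\mathcal{B}(l^p(G))\otimes_{alg}A$.

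The main step is showing $T_n\to\pi(a)$ weak*. Each $T_n$ is block diagonal with blocks of norm $\|\alpha_{t_k^{-1}}(a)\|=\|a\|$, since $\alpha$ is isometric. In $L^p$ the norm of a block diagonal operator is the supremum of the norms of its blocks, so $\|T_n\|\le\|a\|$. On bounded sets the weak* topology agrees with the topology of pointwise convergence against elementary tensors $\zeta\otimes\eta$ with $\zeta\in l^q(G,E')$ and $\eta\in l^p(G,E)$, because their span is norm dense in the nuclear operators. For such a tensor,
\begin{equation*}
\la\zeta,(\pi(a)-T_n)\eta\ra=\sum_{k>n}\la\zeta(t_k),\alpha_{t_k^{-1}}(a)\eta(t_k)\ra,
\end{equation*}
whose absolute value is bounded by $\|a\|\sum_{k>n}\|\zeta(t_k)\|\,\|\eta(t_k)\|$. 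This tail tends to $0$ by Hölder's inequality, since $\sum_t\|\zeta(t)\|\|\eta(t)\|\le\|\zeta\|_q\|\eta\|_p<\infty$.

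Hence $\pi(a)\in\mathcal{B}(l^p(G))\bar{\otimes}A$, and the inclusion $W^*_p(G,A,\alpha)\subset\mathcal{B}(l^p(G))\bar{\otimes}A$ follows.
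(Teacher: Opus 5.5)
Your proof is correct and follows the same route as the paper: you write $\lambda_p^E(s)=\lambda_p(s)\otimes I_E$ and $\pi(a)=\sum_{t\in G}e_{t,t}\otimes\alpha_{t^{-1}}(a)$, and conclude that both lie in $\mathcal{B}(l^p(G))\bar{\otimes}A$. You also supply two points the paper leaves implicit: the weak* convergence of the diagonal partial sums (via the uniform bound $\|T_n\|\le\|a\|$ and H\"older's inequality on elementary tensors), and the fact that $\mathcal{B}(l^p(G))\bar{\otimes}A$ is an algebra by separate weak* continuity of multiplication, which is what covers the products $\pi(f(t))\lambda_p^E(t)$.
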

\begin{proof}
Let $\pi_0:A\rightarrow \mathcal{B}(E)$ is the canonical representation, and let $(\pi,\lambda_p^E)$ be its associated regular representation.
For each $t\in G$, from the definition of $\lambda_p^E(t)$, we have $$\lambda_p^E(t)=\lambda_p(t)\otimes I_E,$$ where $\lambda_p:G\rightarrow \mathcal{B}(l^p(G))$ is the left regular representation, and $I_E$ is the identity operator on $E$. This shows that $\lambda_p^E(t)\in \mathcal{B}(l^p(G)\bar{\otimes}A$.

Let $\delta_t$ be the canonical basis of $l^p(G)$, and let $e_{t,t}$ is the rank one projection in $\B(l^p(G))$ satisfying $e_{t,t}(\sum_{s\in G} a_s\delta_s)=a_t\delta_t$ for all $\sum_{s\in G} a_s\delta_s\in l^p(G)$.
Identifying $l^p(G)\otimes_p E$ with the $L^p$-direct sum $\bigoplus_{t\in G}E$, we may simply take the $L^p$-direct sum representation $$\pi (a)=\bigoplus_{t\in G}\pi_0(\alpha_{t^{-1}}(a))\in \mathcal{B}(\bigoplus_{t\in G}E).$$
In the spatial $L^p$-operator tensor product picture, we have $$\pi(a)=\sum_{t\in G}e_{t,t}\otimes \pi_0(\alpha_{t^{-1}}(a))\in \mathcal{B}(l^p(G))\bar{\otimes}A.$$ This finish the proof of this Lemma.
\end{proof}

\begin{lem}\label{induce}
Let $(G,A,\alpha_1)$ and $(G,B,\alpha_2)$ be two weak* $L^p$-operator algebra dynamical system, where $G$ is a countable discrete group, $A\subset\mathcal{B}(E_1),B\subset\mathcal{B}(E_2)$ are two weak* closed subalgebras, and $\alpha_1:G\rightarrow \mathrm{Aut}(A), \alpha_2:G\rightarrow \mathrm{Aut}(B)$ are two continuous homomorphism. If $\varphi:A\rightarrow B$ is a weak* continuous $p$-completely contractive equivariant homomophism for the action $\alpha_1$ of $G$ on $A$ and the action $\alpha_2$ of $G$ on $B$, then there is a weak* continuous contractive homomorphism $\varphi\rtimes\mathrm{id}:W^*_p(G,A,\alpha_1)\rightarrow W^*_p(G,B,\alpha_2)$. If, in addition, $\varphi$ is a $p$-completely isometric isomorphism, then $\varphi\rtimes\mathrm{id}:W^*_p(G,A,\alpha_1)\rightarrow W^*_p(G,B,\alpha_2)$ is a weak* continuous isometric isomorphism.
\end{lem}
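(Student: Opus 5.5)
We would build $\varphi\rtimes\mathrm{id}$ as the restriction of the amplification of $\varphi$ given by Lemma \ref{tensor}. By Lemma \ref{tensor3}, $W^*_p(G,A,\alpha_1)\subset \mathcal{B}(l^p(G))\bar{\otimes}A$ and $W^*_p(G,B,\alpha_2)\subset \mathcal{B}(l^p(G))\bar{\otimes}B$. Apply Lemma \ref{tensor}, with the tensor factors in the order $\mathcal{B}(l^p(G))\bar\otimes A$ (the lemma is symmetric in the factors, or we compose with the flip isometry of $L^p$-tensor products). This produces a weak* continuous, $p$-completely contractive homomorphism
$$\Psi=\mathrm{id}\otimes\varphi:\mathcal{B}(l^p(G))\bar{\otimes}A\rightarrow \mathcal{B}(l^p(G))\bar{\otimes}B,$$
with $\Psi(T\otimes a)=T\otimes\varphi(a)$. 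The unit of $\mathcal{B}(l^p(G))$ and $I_E$ need some care here. We have $\lambda_p^{E_1}(t)=\lambda_p(t)\otimes I_{E_1}$, and we want this sent to $\lambda_p(t)\otimes I_{E_2}$. This holds if $\varphi$ is unital, which follows when $\varphi$ is surjective, and in general we use the unital setting of the paper. If $\varphi$ is not unital, we instead use that $\pi_1(a)\lambda^{E_1}_p(t)$ are the relevant generators, and compute $\Psi$ on these products directly.

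Next we check $\Psi$ on generators. For $a\in A$,
$$\pi_1(a)=\sum_{t\in G}e_{t,t}\otimes\alpha_{1,t^{-1}}(a),$$
and this sum converges weak*. By weak* continuity and equivariance,
$$\Psi(\pi_1(a))=\sum_t e_{t,t}\otimes\varphi(\alpha_{1,t^{-1}}(a))=\sum_t e_{t,t}\otimes \alpha_{2,t^{-1}}(\varphi(a))=\pi_2(\varphi(a)).$$
Also $\Psi(\pi_1(a)\lambda_p^{E_1}(s))=\pi_2(\varphi(a))\lambda_p^{E_2}(s)$, since $\Psi$ is multiplicative. Because $G$ is discrete, $C_c(G,A,\alpha_1)$ consists of finite sums $\sum_s a_s\delta_s$. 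Hence $\Psi(\pi_1\rtimes\lambda_p^{E_1}(f))=\pi_2\rtimes\lambda^{E_2}_p(\varphi\circ f)$, so $\Psi$ maps the generating algebra into $W^*_p(G,B,\alpha_2)$. By weak* continuity, $\Psi$ maps the weak* closure $W^*_p(G,A,\alpha_1)$ into the weak* closed set $W^*_p(G,B,\alpha_2)$. We define $\varphi\rtimes\mathrm{id}=\Psi|_{W^*_p(G,A,\alpha_1)}$. It is contractive because $\Psi$ is contractive.

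For the isomorphism statement, suppose $\varphi$ is a $p$-completely isometric isomorphism. Then $\Psi$ is isometric by Lemma \ref{tensor}, so $\varphi\rtimes\mathrm{id}$ is isometric. The inverse $\varphi^{-1}$ is again an equivariant $p$-completely isometric homomorphism. The main obstacle is that Lemma \ref{tensor} needs weak* continuity of $\varphi^{-1}$. We would obtain it from Krein–Smulian: $\varphi$ is a weak* continuous isometric bijection between dual spaces, so it is the adjoint of a surjective isometry of preduals, and its inverse is then weak* continuous. With this, the first part yields $\varphi^{-1}\rtimes\mathrm{id}$, and it is the inverse of $\varphi\rtimes\mathrm{id}$. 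Indeed, the two compositions are weak* continuous and equal the identity on the generators, hence everywhere.

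Finally, surjectivity can also be seen directly. The range of $\varphi\rtimes\mathrm{id}$ is weak* closed, since it is an isometric weak* continuous image of a dual space (Krein–Smulian again). It contains all $\pi_2\rtimes\lambda_p^{E_2}(g)$, because $g=\varphi\circ(\varphi^{-1}\circ g)$. Therefore the range is all of $W^*_p(G,B,\alpha_2)$.
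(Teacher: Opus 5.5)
Your proposal is correct and follows the paper's proof. You amplify $\varphi$ to a map $\mathcal{B}(l^p(G))\bar{\otimes}A\to\mathcal{B}(l^p(G))\bar{\otimes}B$ via Lemma \ref{tensor}, use Lemma \ref{tensor3} and equivariance to send $\sum_s\pi_1(a_s)\lambda_p^{E_1}(s)$ to $\sum_s\pi_2(\varphi(a_s))\lambda_p^{E_2}(s)$, and conclude by weak* continuity and density. The paper writes each product as $\sum_t e_{t,t}\lambda_p(s)\otimes\alpha_{1,t^{-1}}(a_s)$, which is exactly your ``compute on products directly'' fallback and avoids needing $\varphi$ unital; your Krein--Smulian argument for the isomorphism part fills in what the paper leaves as ``one can check''.
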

\begin{proof}
Since $\varphi:A\rightarrow B$ is a a weak* continuous $p$-completely contractive homomorphism, it follows from Lemma \ref{tensor} that there exists a  weak* continuous $p$-completely contractive homomorphism $\hat{\varphi}:\B(l^p(G))\bar{\otimes}A\rightarrow \B(l^p(G))\bar{\otimes}B$.

By Lemma \ref{tensor3}, it suffices to prove that $\hat{\varphi}(W^*_p(G,A,\alpha_1)\subset W^*_p(G,B,\alpha_2)$.
Let $\pi_1:A\rightarrow \mathcal{B}(E_1), \pi_2:B\rightarrow \mathcal{B}(E_2)$ be two canonical representations, and let $(\widetilde{\pi_1},\lambda_p^{E_1}), (\widetilde{\pi_2},\lambda_p^{E_2})$ be these associated regular representation, respectively.
Notice that $\varphi$ is equivariant the action $\alpha_1$ of $G$ on $A$ and the action $\alpha_2$ of $G$ on $B$, then for each $\sum_{s\in G}a_s\delta_s\in C_c(G,A,\alpha_1)$, we have
$$\begin{aligned} \hat{\varphi}(\sum_{s\in G}\widetilde{\pi_1}(a_s)\lambda_p^{E_1}(s)) &=
\hat{\varphi}\left(\sum_{s\in G}\left(\sum_{t\in G} e_{t,t}\otimes \alpha_{1,t^{-1}}(a_s)\right)\left(\lambda_p(s)\otimes I_{E_1}\right)\right)\\
 &=\hat{\varphi}\left(\sum_{s\in G}\sum_{t\in G} e_{t,t}\lambda_p(s)\otimes \alpha_{1,t^{-1}}(a_s)\right)\\
 &=\sum_{s\in G}\sum_{t\in G} e_{t,t}\lambda_p(s)\otimes \varphi\circ\alpha_{1,t^{-1}}(a_s)\\
 &=\sum_{s\in G}\sum_{t\in G} e_{t,t}\lambda_p(s)\otimes \alpha_{2,t^{-1}}\circ\varphi(a_s)\\
 &=\sum_{s\in G}\left(\sum_{t\in G}e_{t,t}\otimes\alpha_{2,t^{-1}}\circ\varphi(a_s)\right)\left(\lambda_p(s)\otimes I_{E_2}\right)\\
 &=\sum_{s\in G}\widetilde{\pi_2}(\varphi(s_s))\lambda_p^{E_2}(s)\in W^*_p(G,B,\alpha_2).\end{aligned}$$
Since $\hat{\varphi}$ is weak* continuous and $C_c(G,A,\alpha_1)$ is dense in $W^*_p(G,A,\alpha_1)$ in weak* topology, it follows that  $\hat{\varphi}(W^*_p(G,A,\alpha_1)\subset W^*_p(G,B,\alpha_2)$. The restriction of $\widetilde{\varphi}$ on $W^*_p(G,A,\alpha_1)$ is denoted by $\varphi\rtimes \mathrm{id}$. Then $$\varphi\rtimes\mathrm{id}:W^*_p(G,A,\alpha_1)\rightarrow W^*_p(G,B,\alpha_2)$$ is a weak* continuous contractive homomorphism.

 If, in addition, $\varphi$ is a $p$-completely isometric isomorphism, one can check that $$\varphi\rtimes\mathrm{id}:W^*_p(G,A,\alpha_1)\rightarrow W^*_p(G,B,\alpha_2)$$ is a weak* continuous isometric isomorphism.
\end{proof}

To end this section, we study  expectations on $W^*_p(G,A,\alpha)$ which will be useful to prove Proposition \ref{22}.
\begin{lem}\label{exp}
Let $(G,A,\alpha)$ be a weak* $L^p$-operator algebra dynamical system, where $G$ is a countable discrete group, $A\subset \mathcal{B}(L^p(X,\mu))$ is a weak* closed subalgebra, $\alpha:G\rightarrow \mathrm{Aut}(A)$ is a continuous homomorphism. Then for any $t\in G$, there is a weak* continuous contractive linear map $E_t:W^*_p(G,A,\alpha)\rightarrow A$ such that if $f=\sum_{s\in G} a_s\delta_s\in C_c(G,A,\alpha)$, then $E_t(f)=a_t$.
\end{lem}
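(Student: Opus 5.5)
Since $G$ is discrete, $l^p(G)\otimes_p L^p(X,\mu)$ is the $L^p$-direct sum $\bigoplus_{s\in G}L^p(X,\mu)$. We use the isometric inclusion $\iota_t: L^p(X,\mu)\to l^p(G)\otimes_p L^p(X,\mu)$, $\iota_t(\eta)=\delta_t\otimes\eta$, and the coordinate projection $P_s:l^p(G)\otimes_p L^p(X,\mu)\to L^p(X,\mu)$, $P_s(\xi)=\xi(s)$. Both are contractive. For $T\in\mathcal{B}(l^p(G)\otimes_p L^p(X,\mu))$ and $s,r\in G$, write $T_{s,r}:=P_sT\iota_r\in\mathcal{B}(L^p(X,\mu))$ for its matrix entries. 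The map $T\mapsto T_{s,r}$ is linear and contractive, since $\|T_{s,r}\|\le\|P_s\|\|T\|\|\iota_r\|\le\|T\|$. It is also weak* continuous: for $\xi\in L^p(X,\mu)'$ and $\eta\in L^p(X,\mu)$ we have $\langle\xi,T_{s,r}\eta\rangle=\langle P_s'\xi, T\iota_r\eta\rangle$, so the preadjoint sends $\xi\otimes\eta\mapsto P_s'\xi\otimes\iota_r\eta$. This extends to a bounded map $\mathcal{N}(L^p(X,\mu))\to\mathcal{N}(l^p(G)\otimes_p L^p(X,\mu))$ by the universal property of the projective tensor product.

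Next we compute the entries of $\pi(a)\lambda_p^E(u)$. From Lemma \ref{tensor3}, $\pi(a)=\sum_{t}e_{t,t}\otimes\alpha_{t^{-1}}(a)$ and $\lambda_p^E(u)=\lambda_p(u)\otimes I$. Hence
$$(\pi(a)\lambda_p^E(u))(\delta_r\otimes\eta)=\delta_{ur}\otimes\alpha_{(ur)^{-1}}(a)\eta.$$
Take $f=\sum_u a_u\delta_u$ (a finite sum) and choose the entry $(s,r)=(t,e)$. Then
$$(\pi\rtimes\lambda_p^E(f))_{t,e}=\alpha_{t^{-1}}(a_t).$$
This suggests the definition
$$E_t(T):=\alpha_t\big(T_{t,e}\big).$$
Alternatively one could use the entry $(e,t^{-1})$, which gives $a_t$ directly: $(\pi(a_u)\lambda^E_p(u))(\delta_{t^{-1}}\otimes\eta)=\delta_{ut^{-1}}\otimes\alpha_{tu^{-1}}(a_u)\eta$, and the $e$-coordinate is nonzero only when $u=t$, giving $a_t\eta$. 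So we simply define
$$E_t(T):=T_{e,t^{-1}}=P_eT\iota_{t^{-1}}.$$
This avoids composing with $\alpha_t$, whose weak* continuity would otherwise have to be justified.

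With this definition $E_t$ is contractive, linear and weak* continuous on all of $\mathcal{B}(l^p(G)\otimes_pL^p(X,\mu))$, and $E_t(f)=a_t$ for $f\in C_c(G,A,\alpha)$. It remains to see that $E_t$ maps $W^*_p(G,A,\alpha)$ into $A$. Because $G$ is discrete, the products $\pi(a)\lambda_p^E(u)\pi(b)\lambda_p^E(v)=\pi(a\alpha_u(b))\lambda_p^E(uv)$ again lie in $C_c(G,A,\alpha)$, so the algebra generated by these operators is just the image of $C_c(G,A,\alpha)$. Its image under $E_t$ therefore lies in $A$. Since $E_t$ is weak* continuous and $A$ is weak* closed, the image of the weak* closure $W^*_p(G,A,\alpha)$ also lies in $A$.

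The main point to handle carefully is the weak* continuity of the compression, that is, checking that the preadjoint extends boundedly to the nuclear operators. The rest is bookkeeping.
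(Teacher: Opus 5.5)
Your proof is correct, and its core identity is the same as the paper's. The paper pairs $f_n$ against $y=\sum_i(\delta_e\otimes\xi_i)\otimes(\delta_{t^{-1}}\otimes\eta_i)$, which is exactly the image of $\sum_i\xi_i\otimes\eta_i$ under your preadjoint $P_e'\otimes\iota_{t^{-1}}$. So both proofs read off the $(e,t^{-1})$ matrix entry.

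Where you differ is in the order of the construction.

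The paper defines $E_t$ only on $C_c(G,A,\alpha)$ and extends it by weak* limits along sequences. This forces it to prove several things separately:
\begin{itemize}
\item that these limits exist;
\item that they do not depend on the approximating sequence;
\item that the extension is weak* continuous;
\item and, only afterwards in its contractivity claim, that $E_t$ is given by a compression formula.
\end{itemize}
It also tacitly assumes that every element of $W^*_p(G,A,\alpha)$ is a weak* limit of a \emph{sequence} from $C_c(G,A,\alpha)$. That is not automatic for weak* closures.

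You instead take the compression $T\mapsto P_eT\iota_{t^{-1}}$ as the definition on all of $\mathcal{B}(l^p(G)\otimes_pL^p(X,\mu))$. Linearity, contractivity, weak* continuity and well-definedness are then immediate. The only thing left to check is that the range lies in $A$. That follows from weak* closedness of $A$ together with your observation that $\pi\rtimes\lambda_p^E(C_c(G,A,\alpha))$ is already an algebra.

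Choosing the entry $(e,t^{-1})$ rather than $(t,e)$ also means you never need weak* continuity of the individual automorphisms $\alpha_t$.

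Your route is shorter and more rigorous than the paper's.
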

\begin{proof}
For any $f=\sum_{s\in G} a_s\delta_s\in C_c(G,A,\alpha)$, we define $E_t:C_c(G,A,\alpha)\rightarrow A$ by $E_t(f)=a_t$. Then for any $f\in W^*_p(G,A,\alpha)$, there exists a sequence $f_n\in C_c(G,A,\alpha)$ such that $f_n\xrightarrow{\mathrm{weak}*}f$. We define $E_t:W^*_p(G,A,\alpha)\rightarrow A$ by $$E_t(f)=\lim_{\mathrm{weak}*,n} E_t(f_n),$$ where the $\lim_{\mathrm{weak}*,n}$ is the weak*-limit. Firstly, we shall prove the weak* limit of $E_t(f_n)$ does exist.
By \cite[Exercise 1.35]{Douglas}, it suffices to show that $\la E_t(f_n),x\ra$ is a Cauchy sequence for all $x\in E'\widehat{\otimes}E$. Assume that $x=\sum_{i=1}^\infty \xi_i\otimes\eta_i$, we let $y=\sum_{i=1}^\infty(\delta_e\otimes\xi)\otimes(\delta_{t^{-1}}\otimes\eta_i)\in (l^q(G)\otimes_q E')\widehat{\otimes}(l^p(G)\otimes_p E)$. Then $\la f_n,y\ra$ is a Cauchy sequence. Assume that $f_n=\sum_{s\in G}a_s^{(n)}\delta_s$, a direct computation shows that
$$\begin{aligned} (\sum_{s\in G}a_s^{(n)}\delta_s)(\delta_{t^{-1}}\otimes\eta_i)(r)
 &=\sum_{s\in G}\alpha_{r^{-1}}(a_s^{(n)}(\delta_{t^{-1}}\otimes\eta_i)s^{-1}r)\\
 &=\sum_{s\in G} \alpha_{r^{-1}}(a_s^{(n)}) (\delta_{t^{-1}}\otimes \eta_i)(s^{-1}r)  \ra\\
 &=\alpha_{r^{-1}}(a_{rt}^{(n)})\eta_i,\end{aligned}$$

and
$$\begin{aligned} \la f_n,y\ra
 &=\sum_{i=1}^\infty\la \delta_e\otimes\xi,(\sum_{s\in G}a_s^{(n)}\delta_s)(\delta_{t^{-1}}\otimes\eta_i)\ra\\
 &=\sum_{i=1}^\infty \la \delta_e\otimes\xi, \delta_r\otimes \alpha_{r^{-1}}(a_{rt}^{(n)})\eta_i\ra \\
 &=\sum_{i=1}^\infty\la \xi_i,a_t^{(n)}\eta_i\ra\\
 &=\la E_t(f_n),x\ra.\end{aligned}$$
This shows that  weak* limit of $\{E_t(f_n)\}$ exists.
Then we will prove this Lemma by showing the following three Claims.

{\it Claim 1.} $E_t$ is well defined, that is, $E_t$ does not depend on choice of $f_n$.

Let $f_n=\sum_{s\in G}a_s^{(n)}\delta_s\in C_c(G,A,\alpha)$.
For any $\eta\in L^p(X,\mu)$, a direct computation shows that
$$\begin{aligned} (\sum_{s\in G} a_s^{(n)}\delta_s)(\delta_t\otimes \eta)(r)
 &=\sum_{s\in G}\alpha_{r^{-1}}(a_s^{(n)})\lambda_p^{L^p(X,\mu)}(s)(\delta_t\otimes \eta)(r)\\
 &=\sum_{s\in G} \alpha_{r^{-1}}(a_s^{(n)}) (\delta_t\otimes \eta)(s^{-1}r)  \ra\\
 &=\alpha_{r^{-1}}(a_{rt^{-1}}^{(n)})\eta.\end{aligned}$$
It follows that $$(\sum_{s\in G} a_s^{(n)}\delta_s)(\delta_t\otimes \eta)=\sum_{r\in G}\delta_r\otimes \alpha_{r^{-1}}(a_{rt^{-1}}^{(n)})\eta.$$
Let $x=(\delta_r\otimes\xi)\otimes (\delta_t\otimes \eta)\in (l^q(G)\otimes_p L^q(X,\mu)\widehat{\otimes}(l^p(G)\otimes_p L^p(X,\mu)$, then we have
$$\begin{aligned} \la f_n,x\ra
 &=\la \delta_r\otimes \xi,(\sum_{s\in G}a_s^{(n)}\delta_s)(\delta_t\otimes\eta)\\
 &=\la \delta_r\otimes\xi,\sum_{s\in G}\delta_s\otimes \alpha_{s^{-1}}(a^{(n)}_{st^{-1}})\eta   \ra\\
 &=\la \xi,\alpha_{r^{-1}}(a^{(n)}_{rt^{-1}})\eta \ra. \end{aligned}$$

We choose another $g_n\in C_c(G,A,\alpha)$ such that $g_n\xrightarrow{\mathrm{weak}*}f$,  and let $g_n=\sum_{s\in G}b_s^{(n)}\delta_s$. Then we have
$$\lim\la \xi,\alpha_{r^{-1}}(a^{(n)}_{rt^{-1}})\eta \ra=\lim\la \xi, \alpha_{r^{-1}}(b^{(n)}_{rt^{-1}})\eta \ra.$$
Therefore, $\lim a^{(n)}_{rt^{-1}}=\lim b^{(n)}_{rt^{-1}}$ for all $r,t\in G$. This proves Claim 1.

{\it Claim 2.} $E_t$ is weak* continuous.

Let $f_n,f\in W^*_p(G,A,\alpha)$ such that $f=\lim_{\mathrm{weak}*,n} f_n$, and let $f_{n,m}\in C_c(G,A,\alpha)$ such that $f_n=\lim_{\mathrm{weak}*,m}f_{n,m}$. Then we have $f=\lim_{\mathrm{weak}*,m,n}f_{n,m}$. An easy computation shows that
$$\begin{aligned} E_t(f)
 &=\lim_{\mathrm{weak}*,m,n}E_t(f_{n,m})\\
 &=\lim_{\mathrm{weak}*,n}E_t(f_n). \end{aligned}$$
This proves Claim 2.

{\it Claim 3.} $E_t$ is contractive.

For each $f=\sum_{s\in G}a_s\delta_s \in C_c(G,A,\alpha)$ and $\xi\in l^q(G),\eta\in l^p(G)$, a direct computation shows that
$$\begin{aligned} \la \xi,E_t(f)\eta\ra
 &=\la \xi,a_t\eta\ra \\
 &=\la \delta_e\otimes\xi,\sum_{r\in G}\delta_r\otimes\alpha_{r^{-1}}(a_{rt})\eta\ra\\
 &=\la \delta_e\otimes\xi, f(\delta_t\otimes \eta)\ra. \end{aligned}$$
Since $E_t$ is weak* continuous, it follows that $$ \la \xi,E_t(f)\eta\ra=\la \delta_e\otimes\xi, f(\delta_t\otimes \eta)\ra$$ for all $f\in W^*_p(G,A,\alpha)$. Then we have $$\begin{aligned} \|E_t(f)\|
 &=\sup_{\|\xi\|_q\leq 1,\|\eta\|_p\leq 1}|\la \xi,E_t(f)\eta\ra| \\
 &=\sup_{\|\xi\|_q\leq 1,\|\eta\|_p\leq 1}|\la \delta_e\otimes \xi,f(\delta_t\otimes\eta)\ra|\\
 &\leq \|f\|. \end{aligned}$$
This proves Claim 3.\end{proof}

\begin{lem}\label{inj}
Let $E_t:W^*_p(G,A,\alpha)\rightarrow A$ be as in Lemma \ref{exp}. If $f\in W^*_p(G,A,\alpha)$ and $E_t(f)=0$ for all $t\in G$, then $f=0$.
\end{lem}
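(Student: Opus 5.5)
The plan is to show that every matrix coefficient of $f$ vanishes, using the formula established in Claim 3 of Lemma \ref{exp} together with covariance. Since the vectors $\delta_r\otimes\xi$ (with $r\in G$, $\xi\in E'$) span a dense subspace of $l^q(G)\otimes_q E'$, and likewise the $\delta_t\otimes\eta$ span a dense subspace of $l^p(G)\otimes_p E$, an operator $f$ on $l^p(G)\otimes_p E$ is zero once
$$\la \delta_r\otimes\xi, f(\delta_t\otimes\eta)\ra=0 \quad \text{for all } r,t\in G,\ \xi\in E',\ \eta\in E.$$

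The first step is a general formula for these coefficients. From the computation in Claim 1 of Lemma \ref{exp}, for $f=\sum_s a_s\delta_s\in C_c(G,A,\alpha)$ we have
$$\la \delta_r\otimes\xi, f(\delta_t\otimes\eta)\ra=\la \xi,\alpha_{r^{-1}}(a_{rt^{-1}})\eta\ra=\la \xi,\alpha_{r^{-1}}(E_{rt^{-1}}(f))\eta\ra .$$
To pass to arbitrary $f\in W^*_p(G,A,\alpha)$, take a weak* approximating net $f_n\in C_c(G,A,\alpha)$. The left side is the pairing of $f_n$ with the elementary nuclear tensor $(\delta_r\otimes\xi)\otimes(\delta_t\otimes\eta)$, so it converges. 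On the right, $E_{rt^{-1}}$ is weak* continuous by Lemma \ref{exp}, and $\alpha_{r^{-1}}$ is weak* continuous as well; this is the case when $\alpha_{r^{-1}}$ is spatially implemented, and otherwise it is part of the standing assumption that the action is weak* continuous, by which each automorphism is taken to be weak* continuous. Hence the identity extends to all $f\in W^*_p(G,A,\alpha)$.

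The second step is immediate. If $E_s(f)=0$ for every $s$, then in particular $E_{rt^{-1}}(f)=0$ for all $r,t$, so every coefficient above vanishes and therefore $f=0$.

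The main point needing care is the weak* continuity of each $\alpha_{r^{-1}}$ in the passage to the limit. If the standing assumptions do not supply it, I would avoid $\alpha_{r^{-1}}$ altogether. One applies Claim 3 directly to $f\lambda_p^E(g)$ and $\lambda_p^E(g)f$ (these are weak* continuous multiplications), using $\lambda_p^E(r)^{-1}(\delta_r\otimes\xi)=\delta_e\otimes\xi$ to reduce the pairing to one against $\delta_e\otimes\xi$. This expresses every coefficient as $\la\delta_e\otimes\xi,\, \lambda_p^E(r^{-1}) f(\delta_t\otimes\eta)\ra$, and from it one reads off vanishing from the $E_s$ of a translate of $f$. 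The needed identity $E_s(\lambda_p^E(g)f)=\alpha_g(E_{g^{-1}s}(f))$ is checked on $C_c(G,A,\alpha)$ and extended by weak* continuity.
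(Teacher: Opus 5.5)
This is essentially the paper's proof: your coefficient identity is the weak form of its Claim~2, $W_sfV_t=\alpha_{s^{-1}}(E_{st^{-1}}(f))$, extended from $C_c(G,A,\alpha)$ by weak* continuity of $E_{st^{-1}}$ and of $\alpha_{s^{-1}}$ just as the paper does, and your density step is its Claim~1. Your fallback does not actually remove the dependence on $\alpha$, since extending $E_s(\lambda_p^E(g)f)=\alpha_g(E_{g^{-1}s}(f))$ beyond $C_c(G,A,\alpha)$ again needs $\alpha_g$ to be weak* continuous; no argument can avoid this, because if some $\alpha_g$ were not weak* continuous, a weak* cluster point $f$ of $\pi(a_n)$ (with $(a_n)$ bounded, $a_n\to0$ and $\alpha_{g^{-1}}(a_n)\to b\neq0$ weak*) would satisfy $E_s(f)=0$ for all $s$ but $W_gfV_g=b$, so relying on the standing assumption as you and the paper do is necessary.
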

\begin{proof}
Let $\nu$ the counting measure on $G$. For each $t,s\in G$, let $V_t:L^p(X,\mu)\rightarrow L^p(G\times X,\nu\times\mu)$ given by $$V_t(\xi)=\delta_t\otimes\xi$$ for all $\xi\in L^p(X,\mu)$, and let $W_s:L^p(G\times X,\nu\times\mu)\rightarrow L^p(X,\mu)$ given by $$W_s(\eta)=\eta(s)$$ for all $\eta\in L^p(G\times X,\nu\times\mu)$.
%Then we have $$W_tV_t(\xi)=(V_t\xi)(t)=\xi$$ and $$V_tW_t(\eta)=\delta_t\otimes W_t(\eta)=\delta_t\otimes \eta(t).$$
We will prove this Lemma by showing the following two Claims.

{\it Claim 1.} If $T\in \mathcal{B}(L^p(G\times X,\nu\times\mu)$ satisfies $W_sTV_t=0$ for all $s,t\in G$, then $T=0$.

For any $s,t\in G$ and $\xi\in L^p(X,\mu)$, we have $$W_sTV_t(\xi)=(TV_t\xi)(s)=T(\delta_t\otimes\xi)(s).$$
Therefore, if $W_sTV_t=0$ for all $s,t\in G$, then $T(\delta_t\otimes\xi)(s)=0$ which implies $T=0$.
This will prove Claim 1.

{\it Claim 2.} $W_s(f)V_t=\alpha_{s^{-1}}(E_{st^{-1}}(f))$ for all $f\in W^*_p(G,A,\alpha)$.

For any $f=\sum_{r\in G}a_r\delta_r\in C_c(G,A,\alpha)$ and $\xi\in L^p(X,\mu)$, a direct computation shows that
$$\begin{aligned} W_s(\sum_{r\in G}a_r\delta_r)V_t(\xi)
 &=(\sum_{r\in G}a_r\delta_r)V_t(\xi)(s) \\
 &=\sum_{r\in G}\alpha_{s^{-1}}(a_r)V_t(\xi)(r^{-1}s)\\
 &=\sum_{r\in G}\alpha_{s^{-1}}(a_r)(\delta_t\otimes \xi)(r^{-1}s)\\
 &=\alpha_{s^{-1}}(a_{st^{-1}})\xi. \end{aligned}$$
This shows that $$W_s(\sum_{r\in G}a_r\delta_r)V_t=\alpha_{s^{-1}}(E_{st^{-1}}(f))$$ for all $f\in C_c(G,A,\alpha)$.

For each $f\in W^*_p(G,A,\alpha)$, there exists a sequence $f_n\in C_c(G,A,\alpha)$ such that $f_n\xrightarrow{\mathrm{weak}*}f$.
For each $x=\sum_{i=1}^\infty \zeta_i\otimes \theta_i\in L^q(X,\mu)\widehat{\otimes}L^p(X,\mu)$, then we have
$$\begin{aligned} \la W_s(f_n)V_t,x\ra
 &=\sum_{i=1}^\infty \la \zeta_i, W_s(f_n)V_t(\theta_i)\ra \\
 &=\sum_{i=1}^\infty \la W_s^*\zeta_i,f_nV_t(\theta_i)\ra\\
 &\rightarrow \sum_{i=1}^\infty \la W_s^*\zeta_i,fV_t(\theta_i)\ra\\
 &=\sum_{i=1}^\infty \la \zeta_i, W_sfV_t(\theta_i)\ra, \end{aligned}$$
where $W_s^*$ is the dual operator of $W_s$. This shows that $$W_s(f_n)V_t\xrightarrow{\mathrm{weak}*} W_sfV_t.$$
Since $E_{st^{-1}}$ is weak* continuous and $\alpha_{s^{-1}}$ is a weak* continuous action, it follows that $$W_s(f)V_t=\alpha_{s^{-1}}(E_{st^{-1}}(f))$$ for all $f\in W^*_p(G,A,\alpha)$. This proves Claim 2.
\end{proof}

\section{Dual action}\label{dual}
If $G$ is abelian, then we let $\hat{G}$ denote the dual group of $G$. For each $\gamma\in \hat{G}$, we denote by $V_\gamma$ the invertible isometric operator on $L^p(G)$ given by $$(V_\gamma f)(s)=\overline{\gamma(s)}f(s)$$ for all $f\in L^p(G)$ and $s\in G$.

\begin{lem}\label{dual1}
Let $\pi_0:A\rightarrow \mathcal{B}(E)$ be the canonical representation, and let $(\pi,\lambda_p^E)$ be its associated regular representation.
For each $\gamma\in \hat{G}$ and $s\in G$,
we have the following:
\begin{enumerate}
\item [(i)] $\left(V_\gamma\otimes I_E\right)\left(\lambda_p^E(s)\right)\left( V_{\gamma^{-1}}\otimes I_E\right)=\overline{\gamma(s)}\lambda_p^E(s)$;
\item [(ii)] $(V_\gamma\otimes I_E)\pi(a)(V_{\gamma^{-1}}\otimes I_E)=\pi(a).$
\end{enumerate}
\end{lem}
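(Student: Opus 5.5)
This is a direct computation on $L^p(G,E)\cong L^p(G)\otimes_p E$, evaluating both sides of each identity on a vector $\xi\in L^p(G,E)$ at a point $t\in G$. Both sides are bounded operators. So it suffices to check the identities pointwise for all $\xi$; no density argument is needed. I will use three facts:
\begin{itemize}
\item[] $(V_\gamma\otimes I_E)\xi(t)=\overline{\gamma(t)}\,\xi(t)$, which one checks on elementary tensors $f\otimes\eta$ and extends by linearity and continuity;
\item[] $V_{\gamma^{-1}}=V_\gamma^{-1}$, because $\gamma^{-1}(t)=\overline{\gamma(t)}$ and $|\gamma(t)|=1$;
\item[] $\gamma$ is a character, so $\gamma(s^{-1}t)=\overline{\gamma(s)}\gamma(t)$.
\end{itemize}

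For (i), first apply $V_{\gamma^{-1}}\otimes I_E$, which gives $t\mapsto\gamma(t)\xi(t)$. Next apply $\lambda_p^E(s)$, which gives $t\mapsto\gamma(s^{-1}t)\xi(s^{-1}t)$. Finally apply $V_\gamma\otimes I_E$, which gives
$$t\mapsto\overline{\gamma(t)}\,\gamma(s^{-1}t)\,\xi(s^{-1}t)=\overline{\gamma(t)}\,\overline{\gamma(s)}\,\gamma(t)\,\xi(s^{-1}t)=\overline{\gamma(s)}\,\big(\lambda_p^E(s)\xi\big)(t),$$
since $|\gamma(t)|=1$. This is exactly (i).

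For (ii), $\pi(a)$ acts fibrewise: $(\pi(a)\xi)(t)=\pi_0(\alpha_{t^{-1}}(a))\xi(t)$. The operator $V_\gamma\otimes I_E$ is multiplication by the scalar function $\overline{\gamma(\cdot)}$, and scalars commute with the linear operators $\pi_0(\alpha_{t^{-1}}(a))$. So conjugating gives
$$t\mapsto\overline{\gamma(t)}\,\pi_0(\alpha_{t^{-1}}(a))\big(\gamma(t)\xi(t)\big)=(\pi(a)\xi)(t).$$

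The only mildly delicate point is the first fact: identifying $V_\gamma\otimes I_E$ on the $L^p$-tensor product with pointwise multiplication on $L^p(G,E)$. On elementary tensors $f\otimes\eta$ both sides send the vector to $t\mapsto\overline{\gamma(t)}f(t)\eta$. Elementary tensors span a dense subspace, and both operators are bounded (indeed isometric), so they agree everywhere. The rest is routine.
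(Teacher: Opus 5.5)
Your proposal is correct and follows the paper's proof. Both arguments are direct pointwise computations using three facts: $V_{\gamma^{-1}}\otimes I_E$ is multiplication by $\gamma(\cdot)$, $\gamma(s^{-1}t)=\overline{\gamma(s)}\gamma(t)$, and scalars commute with the fibrewise operators $\pi_0(\alpha_{t^{-1}}(a))$. The only cosmetic difference is in (i): the paper checks $V_\gamma\lambda_p(s)V_{\gamma^{-1}}=\overline{\gamma(s)}\lambda_p(s)$ on $C_c(G)\subset L^p(G)$ and then tensors with $I_E$, whereas you compute directly on $L^p(G,E)$ and also justify the identification of $V_\gamma\otimes I_E$ with pointwise multiplication, which the paper only asserts.
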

\begin{proof}
(i)
Recall that $\lambda_p:G\rightarrow \mathcal{B}(L^p(G))$ is the left regular representation.
To prove (i), it suffices to show that $V_\gamma\lambda_p(s)V_{\gamma^{-1}}=\overline{\gamma(s)}\lambda_p(s)$ for all $s\in G$ and $\gamma\in\hat{G}.$
For each $f\in C_c(G)$, we have that
$$\begin{aligned} \left(V_\gamma\lambda_p(s)V_{\gamma^{-1}}f\right)(t) &=
 \overline{\gamma(t)}\left(\lambda_p(s)V_{\gamma^{-1}}f\right)(t)
  \\ &=\overline{\gamma(t)}\left(V_{\gamma^{-1}}f\right)(s^{-1}t)
  \\ &=\overline{\gamma(t)}\cdot \overline{\gamma^{-1}(s^{-1}t)}f(s^{-1}t)
   \\ &=\overline{\gamma(s)}\left(\lambda_p(s)f\right)(t).\end{aligned}$$
This implies that $V_\gamma\lambda_p(s)V_{\gamma^{-1}}=\overline{\gamma(s)}\lambda_p(s)$.

(ii) If $\xi\in L^p(G,E)$, then $\left((V_\gamma\otimes I_E)\xi\right)(s)=\overline{\gamma(s)}\xi(s)$.
Then we get
$$\begin{aligned} \left((V_\gamma\otimes I_E)\pi(a)\xi\right)(s) &=
\overline{\gamma(s)}\left(\pi(a)\xi\right)(s)
  \\ &=\overline{\gamma(s)}\pi_0(\alpha_{s^{-1}}(s))\xi(s)
  \\ &=\pi_0(\alpha_{s^{-1}}(s)) (V_\gamma\otimes I_E)\xi(s)
   \\ &=\left(\pi(a)(V_\gamma\otimes I_E)\xi\right)(s).\end{aligned}$$
Hence $(V_\gamma\otimes I_E)\pi(a)(V_{\gamma^{-1}}\otimes I_E)=\pi(a).$
\end{proof}

To proceed, we recall the compact open topology on $\hat{G}$. A neighbourhood basis of $\alpha_0 \in \hat{G}$ is formed by the collection of sets
$$V(\alpha_0, K, \epsilon) = \{ \alpha \in \hat{G} : |\alpha(x) - \alpha_0(x)| < \epsilon \text{ for all } x \in K \},$$
where $\epsilon>0$ and $K$ is any compact subset of $G$. Then $\hat{G}$ is a topological group since $V(\alpha_0,K,\epsilon) = V(\alpha_0^{-1}, K, \epsilon)$ and
$V(\alpha_0, K, \epsilon)V(\beta_0, K, \epsilon) \subset V(\alpha_0\beta_0, K, 2\epsilon)$.

\begin{prop}
For each $\gamma\in\hat{G}$, we define $\hat{\alpha}_\gamma(f)=(V_\gamma\otimes I_E)f(V_{\gamma{-1}}\otimes I_E)$ for all $f\in W^*_p(G,A,\alpha)$. Then  $\hat{\alpha}:\hat{G}\rightarrow \mathrm{Aut}(W^*_p(G,A,\alpha)$ is a continuous homomorphism.  The action $\hat{\alpha}$ is called the dual action.
\end{prop}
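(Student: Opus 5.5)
We need to show four things: each $\hat{\alpha}_\gamma$ maps $W^*_p(G,A,\alpha)$ into itself, it is an isometric automorphism, $\gamma\mapsto\hat{\alpha}_\gamma$ is a group homomorphism, and the map is continuous for the pointwise weak* topology. We handle them in that order.

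\textbf{Invariance.} Write $U_\gamma:=V_\gamma\otimes I_E$. Since $|\gamma(s)|=1$, each $U_\gamma$ is an invertible isometry of $L^p(G)\otimes_p E$ with inverse $U_{\gamma^{-1}}$, because $V_\gamma V_{\gamma^{-1}}=I$. Hence $\hat{\alpha}_\gamma(T)=U_\gamma T U_{\gamma}^{-1}$ is an isometric automorphism of $\mathcal{B}(L^p(G)\otimes_p E)$. It is also weak* continuous: for $\xi\in E'$-valued and $\eta\in E$-valued vectors, $\langle U_\gamma TU_\gamma^{-1},\xi\otimes\eta\rangle=\langle T,U_\gamma'\xi\otimes U_\gamma^{-1}\eta\rangle$, and $\omega\mapsto (U_\gamma'\otimes U_\gamma^{-1})\omega$ is bounded on $\mathcal{N}$. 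For $f\in C_c(G,A,\alpha)$, Lemma \ref{dual1} gives
\[
\hat{\alpha}_\gamma(\pi\rtimes\lambda_p^E(f))=\int_G \pi(f(t))\,\overline{\gamma(t)}\,\lambda_p^E(t)\,d\mu(t)=\pi\rtimes\lambda_p^E(\overline{\gamma}f).
\]
This uses that conjugation passes through the Bochner/weak* integral, which follows from the weak* continuity just established. Since $\overline{\gamma}f\in C_c(G,A,\alpha)$, the generating operators are mapped to generating operators. Conjugation is multiplicative, so the generated algebra is preserved. Weak* continuity then gives $\hat{\alpha}_\gamma(W^*_p(G,A,\alpha))\subset W^*_p(G,A,\alpha)$. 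The same applies to $\gamma^{-1}$, so $\hat{\alpha}_\gamma$ is onto and belongs to $\mathrm{Aut}(W^*_p(G,A,\alpha))$.

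\textbf{Homomorphism.} Pointwise, $V_{\gamma_1}V_{\gamma_2}=V_{\gamma_1\gamma_2}$, because $\overline{\gamma_1(s)\gamma_2(s)}=\overline{(\gamma_1\gamma_2)(s)}$. Therefore $\hat{\alpha}_{\gamma_1}\hat{\alpha}_{\gamma_2}=\hat{\alpha}_{\gamma_1\gamma_2}$ and $\hat{\alpha}_1=\mathrm{id}$.

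\textbf{Continuity.} This is the main step. Fix $f\in W^*_p(G,A,\alpha)$ and $\omega=\sum_i\xi_i\otimes\eta_i$ with $\sum\|\xi_i\|\|\eta_i\|<\infty$. Then
\[
\langle\hat{\alpha}_\gamma(f),\omega\rangle=\sum_i\langle U_\gamma'\xi_i,\,fU_{\gamma^{-1}}\eta_i\rangle .
\]
All the operators $U_\gamma$ have norm one, so a tail estimate reduces the problem to finitely many terms. It then suffices to show that $\gamma\mapsto U_{\gamma^{-1}}\eta$ is norm continuous on $L^p(G,E)$, and similarly for $U_\gamma'\xi$ on $L^q(G,E')$.

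To see this, approximate $\eta$ in norm by a function $\eta_0$ supported on a compact set $K\subset G$, with $\|\eta-\eta_0\|<\varepsilon$. For $\gamma\in V(\gamma_0,K,\varepsilon)$ we have
\[
\|(U_{\gamma^{-1}}-U_{\gamma_0^{-1}})\eta_0\|_p\le\sup_{s\in K}|\gamma(s)-\gamma_0(s)|\,\|\eta_0\|\le\varepsilon\|\eta_0\|,
\]
using $|\overline{\gamma^{-1}(s)}-\overline{\gamma_0^{-1}(s)}|=|\gamma(s)-\gamma_0(s)|$. Combined with the uniform bound $\|U_\gamma\|=1$, this gives norm continuity, and the $L^q$ case is identical (here $q<\infty$ since $p>1$).

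Finally, write $\langle U_\gamma'\xi,fU_{\gamma^{-1}}\eta\rangle-\langle U_{\gamma_0}'\xi,fU_{\gamma_0^{-1}}\eta\rangle$ as a sum of two differences, one in each slot. Each is bounded by $\|f\|$ times the corresponding norm difference, so the expression tends to $0$. Hence $\hat{\alpha}$ is continuous in the pointwise weak* topology.
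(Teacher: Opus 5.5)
Your proof is correct. The invariance and homomorphism parts agree with what the paper attributes to Lemma~\ref{dual1}, and you make the invariance of $W^*_p(G,A,\alpha)$ explicit via $\hat{\alpha}_\gamma(\pi\rtimes\lambda_p^E(f))=\pi\rtimes\lambda_p^E(\overline{\gamma}f)$ plus weak* continuity of conjugation. The continuity step, however, takes a genuinely different route.

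\textbf{The paper's route.} It fixes $\gamma_0$ and $\omega_1,\dots,\omega_n$, approximates $f$ by some $\pi\rtimes\lambda_p^E(h)$ with $h\in C_c(G,A,\alpha)$, and uses compactness of $\mathrm{supp}\,h$ to control $\overline{\gamma(t)}-\overline{\gamma_0(t)}$ in a three-term estimate.

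\textbf{Your route.} You keep $f$ fixed and move the $\gamma$-dependence into the predual: $\la\hat{\alpha}_\gamma(f),\omega\ra=\la f,\omega_\gamma\ra$ with $\omega_\gamma=\sum_i U_\gamma'\xi_i\otimes U_{\gamma^{-1}}\eta_i$. You then show $\gamma\mapsto\omega_\gamma$ is norm continuous, using compact support of the vectors rather than of the operator.

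\textbf{What your route buys.} In the paper's estimate, the third term $|\la\hat{\alpha}_\gamma(\pi\rtimes\lambda_p^E(h))-\hat{\alpha}_\gamma(f),\omega_i\ra|$ is only arranged to be small at $\gamma=\gamma_0$. Justifying it for nearby $\gamma$ needs exactly the norm continuity of $\gamma\mapsto\omega_\gamma$ that you prove, and once that is available the approximation by $h$ is unnecessary. Similarly, the paper's $\delta$, built from $|\la\pi\rtimes\lambda_p^E(h),\omega_i\ra|^{-1}$, does not by itself bound the pairing of $\int_G(\overline{\gamma_0(t)}-\overline{\gamma(t)})\pi(h(t))\lambda_p^E(t)\,d\mu(t)$ with $\omega_i$, because the scalar factor varies with $t$.

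Your argument also gives continuity uniformly for $f$ in bounded sets. It works for any locally compact abelian $G$, needing only $p,q<\infty$ so that compactly supported vectors are dense.
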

\begin{proof}
It follows from Lemma \ref{dual1} that $\hat{\alpha}$ is an isometric action of $\hat{G}$ on $W^*_p(G,A,\alpha)$ and $\hat{\alpha}$ is a homomorphism.

For each $f\in W^*_p(G,A,\alpha)$,
to prove $\hat{\alpha}:\hat{G}\rightarrow \mathrm{Aut}(W^*_p(G,A,\alpha)$ is continuous, it suffices to show that $$\hat{\alpha}(f):\hat{G}\rightarrow W^*_p(G,A,\alpha); \gamma\mapsto\hat{\alpha}_\gamma(f)$$ is continuous

{\it Claim 1.} Let $\gamma\in \hat{G}$. If a net $\{f_n\}$ converges to $f$ in weak* topology, then $\{\hat{\alpha}_\gamma(f_n)\}$ converges $\hat{\alpha}_\gamma(f)$ in weak* topology.

 Let $F=L^p(G)\otimes_p E$. For each $\omega=\sum_{i=1}^\infty\xi_i\otimes\eta_i\in \mathcal{N}(F)$ and $\gamma\in\hat{G}$,  we have
$$\begin{aligned} \la \hat{\alpha}_\gamma(f_n),\omega\ra &=
\sum_{i=1}^\infty\la\xi_i,(V_\gamma\otimes I_E)(f_n)( V_{\gamma^{-1}}\otimes I_E)\eta_i\ra
  \\ &=\sum_{i=1}^\infty\la (V_\gamma\otimes I_E)'\xi_i,(f_n)(V_{\gamma^{-1}}\otimes I_E)\eta_i\ra
  \\ &=\la f_n,\sum_{i=1}^\infty ((V_\gamma\otimes I_E)'\xi_i)\otimes ((V_{\gamma^{-1}}\otimes I_E)\eta_i\ra
   \\ &\rightarrow \la f,\sum_{i=1}^\infty ((V_\gamma\otimes I_E)'\xi_i)\otimes ((V_{\gamma^{-1}}\otimes I_E)\eta_i\ra
   \\&=\sum_{i=1}^\infty\la (V_\gamma\otimes I_E)'\xi_i,(f)(V_{\gamma^{-1}}\otimes I_E)\eta_i\ra
   \\&=\sum_{i=1}^\infty\la\xi_i,(V_\gamma\otimes I_E)(f)(V_{\gamma^{-1}}\otimes I_E)\eta_i\ra
   \\&=\la \hat{\alpha}_\gamma(f),\omega\ra,\end{aligned}$$
where $(V_\gamma\otimes I_E)'$ is dual operator of $V_\gamma\otimes I_E$.
This prove Claim 1.

Let $N(\hat{\alpha}_{\gamma_0}(f))=\cap_{i=1}^n\{g\in W^*_p(G,A,\alpha):|\la \hat{\alpha}_{\gamma_0}(f)-g,\omega_i\ra|<1,\omega_i\in \mathcal{N}(F)\}$
be a neighborhood of $\hat{\alpha}_{\gamma_0}(f)$.
By Claim 1, there exists a $h\in C_c(G,A,\alpha)$ such that $$|\la \hat{\alpha}_{\gamma_0}(\pi\rtimes \lambda_p^E(h))-\hat{\alpha}_{\gamma_0}(f),\omega_i\ra|<\frac{1}{3}.$$ We may assume that $h\neq 0$ and $\la\pi\rtimes \lambda_p^E(h),\omega_i\ra\neq 0$ for all $i\in\{1,\cdots,n\}$. Let $K=\mathrm{supp} ~h$, then $K$ is a compact subset of $G$.
Let $\delta=\frac{1}{3}\min\{|\la\pi\rtimes \lambda_p^E(h),\omega_i\ra|^{-1},i=1,\cdots,n\},$ and let $N(\gamma_0)=\{\gamma\in \hat{G}:|\gamma(t)-\gamma_0(t)|<\delta,t\in K\}$. We will finish the proof by showing the following Claim 2.

{\it Claim 2.} $\hat{\alpha}(f)(N(\gamma_0))\subset N(\hat{\alpha}_{\gamma_0}(f))$.

For each $\omega_i\in \mathcal{N}(F)$ and $\gamma\in \hat{\alpha}(N(\gamma_0))$, a direct computation shows that
$$\begin{aligned} &|\la \hat{\alpha}_{\gamma_0}(\pi\rtimes_p^E(h))-\hat{\alpha}_\gamma(\pi\rtimes_p^E(h)),\omega_i\ra|\\
 &=|\la(I_E\otimes V_{\gamma_0})(\pi\rtimes_p^E(h))(V_{{\gamma_0}^{-1}}\otimes I_E)-(I_E\otimes V_{\gamma})(\pi\rtimes_p^E(h))(V_{{\gamma}^{-1}}\otimes I_E),\omega_i\ra|
  \\ &=|\la \int_G(\overline{\gamma_0(t)}-\overline{\gamma(t)})\pi(h(t))\lambda_p^E(t)d\mu(t),\omega_i\ra|<\frac{1}{3}\end{aligned}$$
and
$$\begin{aligned}& |\la \hat{\alpha}_{\gamma_0}(f)- \hat{\alpha}_{\gamma}(f),\omega_i\ra| \\
&\leq|\la \hat{\alpha}_{\gamma_0}(f)- \hat{\alpha}_{\gamma_0}(\pi\rtimes_p^E(h)),\omega_i\ra|+|\la \hat{\alpha}_{\gamma_0}(\pi\rtimes_p^E(h))- \hat{\alpha}_{\gamma}(\pi\rtimes_p^E(h)),\omega_i\ra|\\
&+|\la \hat{\alpha}_{\gamma}(\pi\rtimes_p^E(h))- \hat{\alpha}_{\gamma}(f),\omega_i\ra|
  \\ &<\frac{1}{3}+\frac{1}{3}+\frac{1}{3}=1.\end{aligned}$$
This proves Claim 2.
\end{proof}

\section{The $C^*$-core of $L^p$-operator algebras}
The $C^*$-core of an $L^p$-operator algebra was introduced by Y. Choi,  E. Gardella and H. Thiel \cite{Choi and}, which is an invariant under isometric isomorphisms of $L^p$-operator algebras.

\begin{defn}[{\cite[Definition 2.6.1]{Palmer}}]
Let $A$ be a unital Banach algebra in which $\|1\|=1$. The {\it numerical range} $W(a)$ of an element $a$ in $A$ is the set of all numbers $w(a)\in \mathbb{C}$ with $w$ being a continuous linear functional on $A$ with $\|w\|=w(1)=1$.
\end{defn}

\begin{defn}[{\cite[Definition 5.5]{Lp AF}}]
Let $A$ be a unital Banach algebra in which $\|1\|=1$.
An element $a\in A$ is said to be {\it hermitian} if $W(a)\subset \mathbb{R}$.
\end{defn}

\begin{defn}[{\cite[Definition 2.10]{Choi and}}]
Let $p\in [1,\infty)$, and let $A$ be a unital $L^p$-operator algebra. Denote by $A_h$ the set of hermitian elements in $A$. The algebra $\mathrm{core}(A):={A_h}+\mathrm{i}{A_h}$ is called the $C^*$-{\it core} of $A$.
\end{defn}

By \cite[Theorem 2.9]{Choi and}, the $C^*$-core of $A$ is the largest unital $C^*$-subalgebra of $A$, and it is commutative when $p\in [1,\infty)\setminus\{2\}$.
By \cite[Proposition 2.13]{Choi and}, if $\varphi:A\rightarrow B$ is an isometric isomorphism between two $L^p$-operator algebras $A$ and $B$, then $\varphi: \mathrm{core}(A)\rightarrow \mathrm{core}(B)$ is a $*$-isomorphism. This implies that the $C^*$-core is invariant under isometric isomorphism between $L^p$-operator algebras.

\begin{prop}\label{core1}
Let $p\in (1,\infty)\setminus\{2\}$. Let $G$ be a countable discrete group, $A\subset \mathcal{B}(E)$ be a weak* closed $L^p$-operator algebra, and $\alpha$ be a weak* continuous isometric action of $G$ on $A$. Then the canonical embedding $A\subset W^*_p(G,A,\alpha)$ identifies the $C^*$-core of $W^*_p(G,A,\alpha)$ with that of $A$, that is,
$\mathrm{core}(W^*_p(G,A,\alpha))=\mathrm{core}(A)$.
\end{prop}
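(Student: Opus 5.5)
We prove the two inclusions separately. The inclusion $\mathrm{core}(A)\subset\mathrm{core}(W^*_p(G,A,\alpha))$ is straightforward. The canonical embedding $a\mapsto\pi(a)$ is an isometric unital homomorphism, since $\pi(a)=\sum_{t}e_{t,t}\otimes\alpha_{t^{-1}}(a)$ and $\alpha$ is isometric. Hermitian elements are characterized by $\|\exp(\mathrm{i}sh)\|=1$ for all $s\in\mathbb{R}$ (Vidav--Palmer). This condition is preserved by unital isometric homomorphisms and is intrinsic to the element, so $\pi(A_h)\subset W^*_p(G,A,\alpha)_h$.

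For the reverse inclusion, take $h$ hermitian in $W:=W^*_p(G,A,\alpha)$, viewed inside $\mathcal{B}(l^p(G)\otimes_p E)$. Since $\|\exp(\mathrm{i}sh)\|=1$ in $W$, and $W$ is a unital subalgebra of $\mathcal{B}(l^p(G)\otimes_pE)$ with the same unit, $h$ is also hermitian as an operator on the $L^p$-space $l^p(G)\otimes_p E=L^p(G\times X)$. For $p\neq 2$, the hermitian operators on an $L^p$-space are exactly the multiplication operators by real $L^\infty$ functions (Tam / Lumer; this is also the content of \cite[Theorem 2.9]{Choi and}). So $h$ is a multiplication operator $M_\phi$ with $\phi\in L^\infty(G\times X)$ real. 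In particular $h$ commutes with each diagonal projection $e_{t,t}\otimes I_E$, and $W_s h V_t=0$ for $s\neq t$, with $W_s, V_t$ as in the proof of Lemma \ref{inj}.

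Next we use Claim 2 of Lemma \ref{inj}: $W_shV_t=\alpha_{s^{-1}}(E_{st^{-1}}(h))$. Taking $s\ne t$ gives $E_r(h)=0$ for every $r\neq e$. Put $a:=E_e(h)\in A$, which exists by Lemma \ref{exp}, and consider $h-\pi(a)\in W$. We have $E_r(h-\pi(a))=0$ for all $r$, so $h=\pi(a)$ by Lemma \ref{inj}. It remains to see that $a$ is hermitian in $A$. Since $\pi$ is an isometric unital homomorphism, $\|\exp(\mathrm{i}sa)\|=\|\exp(\mathrm{i}s\pi(a))\|=\|\exp(\mathrm{i}sh)\|=1$, so $a\in A_h$. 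Hence $W_h=\pi(A_h)$, and taking $A_h+\mathrm{i}A_h$ gives $\mathrm{core}(W)=\mathrm{core}(A)$.

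The main obstacle is the step identifying hermitian elements of $W$ with multiplication operators. This needs two things. First, hermitian-ness in the subalgebra $W$ has to transfer to $\mathcal{B}(L^p)$; this is handled by the exponential characterization, which avoids the unclear extension of states. Second, we need the classical fact that hermitian operators on $L^p$ ($p\ne2$) are real multiplication operators, applied on the $\sigma$-finite space $G\times X$. Once diagonality is known, the vanishing of the off-diagonal coefficients $E_r(h)$ follows directly from the formula in Lemma \ref{inj}.
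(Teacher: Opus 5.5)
Your proof is correct and follows essentially the same route as the paper. In both arguments, a hermitian element of $W^*_p(G,A,\alpha)$ is hermitian in $\mathcal{B}(l^p(G)\otimes_p E)$ and hence a real multiplication operator, so its coefficients $E_r$ with $r\neq e$ vanish, and Lemma \ref{inj} forces it to coincide with its $e$-th coefficient. The differences are cosmetic: you read off $E_r(h)=0$ directly from the identity $W_shV_t=\alpha_{s^{-1}}(E_{st^{-1}}(h))$, whereas the paper uses the auxiliary map $F(T)=W_eTV_e$ together with multiplication by $\lambda_p^E(t)$. Likewise, you get that $E_e(h)$ is hermitian from the isometric unital embedding and the exponential characterization, whereas the paper applies Proposition 2.16 of Choi--Gardella--Thiel to the unital contraction $E_e$.
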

\begin{proof}
Let $\pi_0:A\rightarrow \mathcal{B}(E)$ be the canonical representation, and let $(\pi,\lambda_p^E)$ be its associated regular representation.
For each $s,t\in G$ and $\xi\in E$,
we have $$\lambda_p^E(s)(\delta_s\otimes\xi)=\delta_{st}\otimes\xi$$ and $$\pi(a)(\delta_t\otimes\xi)=\delta_t\otimes \pi_0(\alpha_{t^{-1}}(a))(\xi).$$
We define $F:\mathcal{B}(l^p(G)\otimes_p E)\rightarrow \mathcal{B}(E)$ by $$F(T)(\xi)=(T(\delta_e\otimes\xi))(e)$$ for all $T\in \mathcal{B}(l^p(G)\otimes_p E))$ and $\xi\in E$, where $e$ is the unit of $G$.

{\it Claim 1.} $F$ is weak* continuous.

Let $\{T_\alpha\}$ be a net in $\mathcal{B}(l^p(G)\otimes_p E)$ such that $\{T_\alpha\}$ converges to $T$ in weak* topology.
For each $x=\sum_{i=1}^\infty \xi_i\otimes\eta_i\in E'\widehat{\otimes}E$, we have
$$\begin{aligned} \la F(T_\alpha),x\ra
&=\sum_{i=1}^\infty \la \xi_i, F(T_\alpha)\eta_i\ra\\
&=\sum_{i=1}^\infty \la \xi_i, T_\alpha(\delta_e\otimes\eta_i)(e)\ra\\
&=\sum_{i=1}^\infty \la \delta_e\otimes \xi_i,T_\alpha(\delta_e\otimes\eta_i) \ra \\
&=\sum_{i=1}^\infty \la T_\alpha, \sum_{i=1}^\infty (\delta_e\otimes\xi_i)\otimes (\delta_e\otimes\eta_i) \ra \\
&\rightarrow \sum_{i=1}^\infty \la T, \sum_{i=1}^\infty (\delta_e\otimes\xi_i)\otimes (\delta_e\otimes\eta_i) \ra \\
&=\sum_{i=1}^\infty \la \delta_e\otimes \xi_i,T(\delta_e\otimes\eta_i) \ra \\
&=\sum_{i=1}^\infty \la \xi_i, T(\delta_e\otimes\eta_i)(e)\ra\\
&=\sum_{i=1}^\infty \la \xi_i, F(T)\eta_i\ra\\
&=\la F(T),x\ra.
\end{aligned}$$
This proves Claim 1.

{\it Claim 2.} We have $F\circ(\pi\rtimes\lambda_p^E)=\pi_0\circ F$, that is, the following diagram commutates:
$$\begin{CD}
  W^*_p(G,A,\alpha) @ >\pi\rtimes\lambda_p^E>>\mathcal{B}(l^p\otimes_p E) \\
     @VV E_e V @VV F V\\
    A @>\pi_0>> A.
\end{CD}$$
Since all the maps involved are weak* continuous, it suffices to prove this by showing $$F\circ(\pi\rtimes\lambda_p^E)(f)=\pi_0\circ F(f)$$ for all $f\in C_c(G,A,\alpha)$. Let $f=\sum_{s\in G}a_s\delta_s$ and $\xi\in E$, a direct computation shows that

$$\pi_0\circ E_e(\sum_{s\in G}a_s\delta_s)=a_e,$$

   and $$\begin{aligned} F\circ(\pi\rtimes\lambda_p^E)(f)\xi &=
(\pi\rtimes\lambda_p^E)(f)(\delta_e\otimes\xi)(e)
  \\ &=\sum_{s\in G}\pi(a_s)\lambda_p^E(s)(\delta_e\otimes\xi)(e)
   \\ &=\sum_{s\in G}(\delta_s\otimes \pi_0(\alpha_{s^{-1}}(a_s))\xi)(e)
   \\&=a_e\xi.\end{aligned}$$
This proves Claim 2.

{\it Claim 3.}
Let $E=L^p(X,\mu)$ be a $\sigma$-finite separable $L^p$-space. If $\eta:G\rightarrow L^\infty(X,\mu)$ is a bounded function with associated multiplication operator $M_\eta\in \mathcal{B}(l^p(G)\otimes_p E)$, then $F(M_\eta\lambda_p^E(t))=0$ when $t\neq e$.

For each $\xi\in E$ and $t\in G, t\neq e$, we have
$$\begin{aligned} F(M_\eta\lambda_p^E(t))(\xi) &=
(M_\eta\lambda_p^E(t))(\delta_e\otimes\xi)(e)
  \\ &=\eta(e)((\delta_t\otimes\xi)(e))
   \\ &=0.\end{aligned}$$
This proves Claim 3.

{\it Claim 4.} Let $f\in \mathrm{core}(W^*_p(G,A,\alpha))$. Then $E_e(\pi\rtimes\lambda_p^E(f)\lambda_p^E(t))=0$ when $t\neq e$.

Note that $\pi\rtimes\lambda_p^E(f)\in \mathrm{core}(\mathcal{B}(l^p(G)\otimes_p E))$, it follows from \cite[Example 2.11]{Choi and} that there exists a bounded function $\eta:G\rightarrow E$ such that $M_\eta=\pi\rtimes\lambda_p^E(f)$. For each $t\in G, t\neq e$,
by Claim 2 and Claim 3, we have $$\begin{aligned} \pi_0(E_e(\pi\rtimes\lambda_p^E(f)\lambda_p^E(t))) &=
F(\pi\rtimes\lambda_p^E(f)\lambda_p^E(t))
  \\ &=F(M_\eta\lambda_p^E(t))
   \\ &=0.\end{aligned}$$
This proves Claim 4.

Now we shall prove this proposition. Obviously, we have $\mathrm{core}(A)\subset \mathrm{core}(W^*_p(G,A,\alpha))$. If $f\in \mathrm{core}(W^*_p(G,A,\alpha))$, to finish the proof of this proposition, it suffices to show that $E_e(f)=f$. By \cite[Proposition 2.16]{Choi and}, we have $E_e(f)\in \mathrm{core}(A)$.Thus for each $t\in G\setminus\{e\}$, we have $$E_e(f\lambda_p^E(t))=0=E_e(E_e(f)\lambda_p^E(t)).$$
For $t=e$, we have $$E_e(f\lambda_p^E(e)=E_e(f)=E_e(E_e(f)\lambda_p^E(e)).$$
This shows that $$E_t(E_e(f)-f)=E_e((E_e(f)-f)\lambda_p^E(t^{-1}))=0$$ for each $t\in G$. Therefore, $E_e(f)=f$.
\end{proof}

%\begin{prop}
%Let $A\subset \mathcal{B}(L^p(X,\mu))$ and $B\subset\mathcal{B}(L^p(Y,\nu))$ be two weak* closed $L^p$-operator algebras. Then we have $\mathrm{core}(A\bar{\otimes}B)=\mathrm{core}(A)\bar{\otimes}\mathrm{core}(B)$.
%\end{prop}

\section{The Gelfand theory of $p$-pseudomeasure algebras}

In this section, we study the Gelfand theory of $p$-pseudomeasure algebra $PM_p(G)$ when $G$ is a locally compact abelian group. The reader will see that then Takesaki duality heavily depend on Gelfand theory. All the new phenomena of Takesaiki duality for weak* closed $L^p$-operator crossed products originally come from the Gelfand theory of $p$-pseudomeasure algebras.
Firstly, we look into the $C^*$-core of $PM_p(G)$.

\begin{lem}\label{core2}
Let $p\in (1,\infty)\setminus\{2\},$ and let $G$ be a locally compact group. If $PM_p(G)=CV_p(G)$, then $\mathrm{core}(PM_p(G))=\mathbb{C}$.
\end{lem}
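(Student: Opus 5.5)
The plan is to combine the known description of the $C^*$-core of $\mathcal{B}(L^p)$ for $p\neq 2$ with the commutation relation that defines $CV_p(G)$. Since $\mathrm{core}(PM_p(G))$ is a unital $C^*$-subalgebra of $PM_p(G)\subset\mathcal{B}(L^p(G))$, every hermitian element of $PM_p(G)$ is hermitian in $\mathcal{B}(L^p(G))$. Hermitian-ness is defined through the numerical range. By the Bohnenblust--Karlin/Sinclair characterization, $a$ is hermitian iff $\|e^{ita}\|=1$ for all real $t$, and this condition does not depend on the ambient unital algebra. Hence
$$\mathrm{core}(PM_p(G))\subset \mathrm{core}(\mathcal{B}(L^p(G))).$$
By \cite[Example 2.11]{Choi and}, already used in Claim 4 of Proposition \ref{core1}, for $p\neq 2$ the latter is exactly the algebra of multiplication operators $M_\eta$ with $\eta\in L^\infty(G)$. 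So every $T\in\mathrm{core}(PM_p(G))$ has the form $T=M_\eta$ for some $\eta\in L^\infty(G,\mu)$.

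Next use the hypothesis $PM_p(G)=CV_p(G)$: such a $T$ commutes with every $\rho_p(s)$. A direct computation gives $(\rho_p(s)M_\eta\rho_p(s)^{-1}g)(t)=\eta(ts)g(t)$, so $\rho_p(s)M_\eta\rho_p(s)^{-1}=M_{\eta(\cdot s)}$. Commutation therefore forces $\eta(ts)=\eta(t)$ for $\mu$-a.e. $t$, for each $s\in G$. The goal is to conclude that $\eta$ is a.e. constant, which gives $T\in\mathbb{C}I$. The reverse inclusion $\mathbb{C}\subset\mathrm{core}(PM_p(G))$ holds because $PM_p(G)$ is unital: the identity is the weak* limit of an approximate identity in $F^p_\lambda(G)$, or simply $\lambda_p(e)$ in the discrete case.

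The main obstacle is the measure-theoretic step of turning "for each $s$, $\eta(\cdot s)=\eta$ a.e." (with the null set depending on $s$) into "$\eta$ is a.e. constant" for a general locally compact group. If $G$ is discrete this is immediate: take $t=e$ and conclude $\eta(s)=\eta(e)$. In general I would convolve with an approximate identity. For $\varphi\in C_c(G)$, form $\eta*\varphi$, or pair $\eta$ against $L^1$ functions, and use right-translation invariance and Fubini. This shows that $\eta*\varphi$ is continuous and right-invariant, hence constant. Letting $\varphi$ run through an approximate identity then gives $\eta$ constant a.e. Alternatively, apply Fubini to $\{(t,s):\eta(ts)\neq\eta(t)\}$: it is null in $G\times G$, so for a.e. $t$ the function $s\mapsto\eta(ts)$ equals $\eta(t)$ for a.e. $s$, which says $\eta$ is a.e. equal to a constant. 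For non-$\sigma$-finite $G$ the argument can be localized to $\sigma$-compact open subgroups.
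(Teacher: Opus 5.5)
Your proposal is correct and follows essentially the paper's route. Hermitian elements of $PM_p(G)$ are hermitian in $\mathcal{B}(L^p(G))$, hence real multiplication operators $M_\eta$ for $p\neq 2$, and commuting with every $\rho_p(s)$ forces $\eta$ to be constant.

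Your extra justifications fill in steps the paper only states pointwise: hermitian-ness is intrinsic because $I\in PM_p(G)$, and the approximate-identity or Fubini argument upgrades ``$\eta(\cdot s)=\eta$ a.e.\ for each $s$'' to ``$\eta$ is a.e.\ constant''. This matters because the lemma is later applied to the compact, non-discrete group $\hat{G}$.
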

\begin{proof}
The algebras $PM_p(G)$ and $CV_p(G)$ are always unital (see \cite{Gardella and Thiel iso}). For any $f\in (PM_p(G))_h$, then $f$ is a multiplication operator on $L^p(G)$. Since $f\in PM_p(G)=CV_p(G)$, it follows that $$f\rho_p(t)=\rho_p(t)f$$ for all $t\in G$, where $\rho_p$ is the right regular representation of $G$. This implies that $$f(s)=f(st)$$ for all $s,t\in G$. Then $f$ is a real-valued constant function. Therefore, $\mathrm{core}(PM_p(G))=\mathbb{C}$.
\end{proof}

Secondly,
we introduce $p$-incompressible $L^p$-operator algebras. This property is invariant under isomorphisms of $L^p$-operator algebras. The definition is inspired by N. C. Phillips and B. Johnson' work on incompressible Banach algebras \cite{incompressible}.

\begin{defn}
Let $A$ be an $L^p$-operator algebra.
We say that $A$ is {\it $p$-incompressible} if whenever  $E$ is an $L^p$-space and $\varphi:A\rightarrow \mathcal{B}(E)$ is a bounded injective homomorphism, then $\varphi$ is bounded below.
\end{defn}

\begin{rmk}\label{c}
\begin{enumerate}
\item[(i)] Let $A$ and $B$ be two $L^p$-operator algebras. Assume that $A$ and $B$ are isomorphic, then $B$ is $p$-incompressible if and only if $A$ is $p$-incompressible.
\item[(ii)] The $M_n^p$ and $C^*$-algebras are examples of $p$-incompressible $L^p$-operator algebras (see \cite{incompressible}).
\end{enumerate}
\end{rmk}

\begin{lem}[{\cite[Exercise 5 on page 93]{Conway}}]\label{below}
Let $X$ and $Y$ be Banach spaces and let $T\in \mathcal{B}(X,Y)$. Then there is a constant $c>0$ such that $\|Tx\||\geq c\|x\|$ for all $x\in X$ if and only if $\ker T=\{0\}$ and $\mathrm{ran} T$ is closed.
\end{lem}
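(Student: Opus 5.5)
The statement is the standard characterization of bounded-below operators, and I would prove the two directions separately, using the open mapping theorem (in the form of the bounded inverse theorem) for the nontrivial one.

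For the forward direction, assume $\|Tx\|\geq c\|x\|$ for all $x\in X$ with some $c>0$. Injectivity is immediate: if $Tx=0$ then $c\|x\|\leq 0$, so $x=0$. For closedness of the range, take a sequence $Tx_n\to y$ in $Y$. Then $(Tx_n)$ is Cauchy, and the inequality $\|x_n-x_m\|\leq c^{-1}\|Tx_n-Tx_m\|$ shows that $(x_n)$ is Cauchy in $X$. Since $X$ is complete, $x_n\to x$ for some $x\in X$. Continuity of $T$ then gives $Tx=\lim Tx_n=y$, so $y\in\mathrm{ran}\,T$, and the range is closed.

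For the converse, assume $\ker T=\{0\}$ and $\mathrm{ran}\,T$ is closed in $Y$. Then $\mathrm{ran}\,T$ is a Banach space, being a closed subspace of the Banach space $Y$. The operator $T:X\rightarrow \mathrm{ran}\,T$ is a bounded linear bijection between Banach spaces. By the bounded inverse theorem, its inverse $T^{-1}:\mathrm{ran}\,T\rightarrow X$ is bounded, so there is $M>0$ with $\|T^{-1}y\|\leq M\|y\|$ for every $y\in \mathrm{ran}\,T$. Taking $y=Tx$ gives $\|x\|\leq M\|Tx\|$, i.e.\ $\|Tx\|\geq c\|x\|$ with $c=M^{-1}$. (If $X=\{0\}$ the statement is trivial.)

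The only step with real content is the appeal to the bounded inverse theorem. Its essential hypothesis is that the range be closed, hence complete; without closed range the inverse can fail to be bounded. I expect no other difficulty, since the remainder consists of routine Cauchy-sequence arguments.
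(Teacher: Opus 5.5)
Your proof is correct: the Cauchy-sequence argument gives the forward direction, and applying the bounded inverse theorem to $T\colon X\to\mathrm{ran}\,T$ gives the converse, which is exactly the standard argument. The paper gives no proof of its own and only cites Conway's exercise; that exercise sits in the section on the open mapping theorem and is meant to be solved this way.
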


The algebra $L^\infty(G)$ is a dual space of $L^1(G)$, then there is a natural weak* topology of $L^\infty(G)$ viewed as the dual space of $L^1(G)$. Also, there is a natural multiplicative representation $M:L^\infty(G)\rightarrow \mathcal{B}(L^p(G)); f\mapsto M_f$, where $M_f$ is the multiplicative operator on $L^p(G)$. So there is a weak* topology on $L^\infty(G)$  viewed as the operators on $L^p(G)$, and we call this topology  {\it operator weak* topology}. In the following lemma, we will show that this two weak* topology coincide.
\begin{lem}
Let $p>1$ and let $G$ be a locally compact group. The natural weak* topology on $L^\infty(G)$ coincides with operator weak* topology.
\end{lem}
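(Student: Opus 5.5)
We need to show that two topologies on $L^\infty(G)$ coincide. The first is $\sigma(L^\infty,L^1)$. The second is the topology induced by the embedding $f\mapsto M_f\in\mathcal{B}(L^p(G))$ with the weak* topology from $\mathcal{N}(L^p(G))=L^q(G)\widehat{\otimes}L^p(G)$. The plan is to identify the families of functionals defining each topology. If every functional of one family is a functional of the other family, and conversely, the two topologies are equal.

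\emph{Operator weak* functionals are $L^1$-functionals.} For $\omega=\sum_{i}\xi_i\otimes\eta_i\in\mathcal{N}(L^p(G))$ with $\sum_i\|\xi_i\|_q\|\eta_i\|_p<\infty$, we compute
$$\la M_f,\omega\ra=\sum_i\int_G \xi_i(t)f(t)\eta_i(t)\,d\mu(t)=\int_G f\,h_\omega\,d\mu,\qquad h_\omega:=\sum_i \xi_i\eta_i .$$
By Hölder's inequality, $\|\xi_i\eta_i\|_1\le\|\xi_i\|_q\|\eta_i\|_p$, so the series defining $h_\omega$ converges absolutely in $L^1(G)$. This justifies interchanging sum and integral by dominated convergence. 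Hence every operator weak* functional is of the form $f\mapsto\int f h$ with $h\in L^1(G)$, and the operator weak* topology is coarser than $\sigma(L^\infty,L^1)$.

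\emph{$L^1$-functionals are operator weak* functionals.} Given $h\in L^1(G)$, we factor it as $h=\xi\eta$ with $\eta=|h|^{1/p}$ and $\xi=\mathrm{sgn}(h)|h|^{1/q}$. Then $\xi\in L^q(G)$ and $\eta\in L^p(G)$, since $\|\xi\|_q^q=\|\eta\|_p^p=\|h\|_1$; this uses $1<p<\infty$. Consequently $\int f h\,d\mu=\la M_f,\xi\otimes\eta\ra$. So every $\sigma(L^\infty,L^1)$-functional is continuous for the operator weak* topology, and the converse inclusion of topologies holds.

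Combining the two steps, the two locally convex topologies are generated by the same family of seminorms $f\mapsto|\int fh|$ with $h\in L^1(G)$, and therefore they coincide. The only subtle point is the convergence and interchange in the first step. Absolute summability in the projective tensor norm handles this, and on a locally compact group with Haar measure no $\sigma$-finiteness issue arises, since all the functions involved are integrable. We could also note that $h\mapsto h$ gives a quotient map $\mathcal{N}(L^p(G))\to L^1(G)$ whose adjoint is exactly $M$, which confirms the identification.
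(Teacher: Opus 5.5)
Your proof is correct. The second half is the paper's own argument: you factor $h\in L^1(G)$ as $\mathrm{sgn}(h)|h|^{1/q}\cdot|h|^{1/p}$ and read $\int fh$ as $\langle M_f,\xi\otimes\eta\rangle$. Your choice of phases is the right one; the paper's displayed formulas for $g^{1/p}$ and $g^{1/q}$ carry a stray phase factor and do not multiply back to $g$ as written.

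The first half takes a different route.

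\begin{itemize}
\item \textbf{The paper's route.} It takes a net $f_\alpha\to f$ in $\sigma(L^\infty,L^1)$ and invokes the uniform boundedness principle to assume the net is bounded. It then checks convergence only against elementary tensors $\xi\otimes\eta$ via H\"older. Finally it appeals to the coincidence of the weak* and weak operator topologies on bounded sets.
\item \textbf{Your route.} You evaluate an arbitrary nuclear functional $\omega=\sum_i\xi_i\otimes\eta_i$ and get $\langle M_f,\omega\rangle=\int f\,h_\omega$ with $h_\omega=\sum_i\xi_i\eta_i\in L^1(G)$. So every defining seminorm of the operator weak* topology is literally a $\sigma(L^\infty,L^1)$ seminorm.
\end{itemize}

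Your version buys something real. A weak* convergent net in a dual space need not be bounded, since the uniform boundedness principle only gives this for sequences. The paper's reduction to bounded nets is therefore not justified as written; it would need an extra tool such as the Krein--Smulian theorem to repair. Your seminorm-level comparison needs no boundedness at all. It also makes transparent that $f\mapsto M_f$ is the adjoint of the quotient map $\xi\otimes\eta\mapsto\xi\eta$ from $\mathcal{N}(L^p(G))$ onto $L^1(G)$, which is the cleanest way to see why the two topologies agree.
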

\begin{proof}
Let $\{f_\alpha\}$ be a net in $L^\infty(G)$ converges to $f\in L^\infty(G)$ in weak* topology. By uniform boundedness principle we may assume
that $\{f_\alpha\}$ is uniformly bounded. For each $\xi\in L^q(G)$ and $\eta\in L^p(G)$, by H$\ddot{\mathrm{o}}$lder inequality, we have $\xi\eta\in L^1(G)$. Then $$\la \xi,M_{f_\alpha}\eta\ra=\la f_\alpha, \xi\eta\ra\rightarrow \la f,\xi\eta\ra=\la \xi,M_f\eta\ra.$$
Since the operator weak* topology coincides with weak operator topology on bounded sets, it follows that the net $\{f_\alpha\}$ converges to $f$ in operator weak* topology.

On the other hand, let $\{f_\alpha\}$ be a net in $L^\infty(G)$ converges to $f\in L^\infty(G)$ in operator weak* topology. For each $g\in L^1(G)$, we let $g^{\frac{1}{p}}(t)=g(t)|g(t)|^{\frac{1}{p}-1}\lambda(t)$ and $g^{\frac{1}{q}}(t)=g(t)|g(t)|^{\frac{1}{q}-1}$, where $g(t)=\lambda(t)|g(t)|$. Then we have $g^{\frac{1}{p}}\in L^p(G)$ and $g^{\frac{1}{q}}\in L^q(G)$. Therefore, one can check that $$\la f_\alpha,g\ra=\la f_\alpha, g^{\frac{1}{p}}g^{\frac{1}{q}}\ra=\la g^{\frac{1}{q}}, M_{f_\alpha} g^{\frac{1}{p}}\ra\rightarrow \la g^{\frac{1}{q}}, M_{f} g^{\frac{1}{p}}\ra=\la f,g\ra.$$
Hence the net $\{f_\alpha\}$ converges to $f$ in weak* topology.
\end{proof}

\begin{thm}\label{G}
Let $p>1$ and Let $G$ be a locally compact abelian group. We denote by $\Gamma_p:PM_p(G)\rightarrow L^\infty(\hat{G})$ the Gelfand transform of $PM_p(G)$. Then:
\begin{enumerate}
\item [(i)] the map $\Gamma_p$ is injective contractive weak* continuous with dense range in weak* topology;
\item [(ii)] the map $\Gamma_p$ is surjective if and only if $G$ is finite or $p=2$;
\item [(iii)] the map $\Gamma_p$ is isometrically isomorphic when $p=2$ or $G$ is trivial;
\item [(iv)] $PM_p(G)$ is isometrically isomorphic to $L^\infty(\hat{G})$ if and only if $p=2$ or $G$ is trivial;
\item [(v)] $PM_p(G)$ is isomorphic to $L^\infty(\hat{G})$ if and only if $G$ is finite or $p=2$.
\end{enumerate}
%(i) $\Gamma_p$ is injective contractive with dense range in weak* topology;
%
%(ii) $\Gamma_p$ is surjective if and only if $G$ is finite or $p=2$;
%
%
%(iii) $\Gamma_p$ is weak* continuous;
%
%(iv) $PM_p(G)$ is isometrically isomorphic to $L^\infty(\hat{G})$ if and only if $p=2$ or $G$ is trivial;
%
%(v) $PM_p(G)$ is isomorphic to $L^\infty(\hat{G})$ if and only if $G$ is finite or $p=2$.
\end{thm}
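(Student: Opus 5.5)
We first construct $\Gamma_p$ concretely. For $\gamma\in\hat G$ the functional $\lambda_p(f)\mapsto \hat f(\gamma)=\int_G f(s)\overline{\gamma(s)}\,d\mu(s)$ on $C_c(G)$ is a character of $F^p_\lambda(G)$. It is bounded by $\|\lambda_p(f)\|$, because $\hat f(\gamma)$ lies in the spectrum of $\lambda_p(f)$. Hence $\|\hat f\|_\infty\le\|\lambda_p(f)\|$. To extend to $PM_p(G)$ we use the Fourier–Herz picture. We know $PM_p(G)=CV_p(G)$ by the Remark after the definitions (Daws–Spronk), and $PM_p(G)$ is the dual of the Figà-Talamanca–Herz algebra $A_p(G)$. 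Since $G$ is abelian, every $T\in CV_p(G)$ commutes with translations. By Riesz–Thorin/duality, $CV_p(G)\subset CV_2(G)$ isometrically up to the bound $\|T\|_{CV_2}\le\|T\|_{CV_p}$, and $CV_2(G)\cong L^\infty(\hat G)$ via the Plancherel transform. We define $\Gamma_p(T)$ to be the $L^\infty(\hat G)$ multiplier symbol of $T$ acting on $L^2$; this yields contractivity and injectivity. The symbol is the Gelfand transform because it extends $f\mapsto\hat f$ multiplicatively. For weak* continuity we check on bounded nets (Krein–Smulian): the pairing $\langle \Gamma_p(T), \hat\xi\cdot\overline{\hat\eta}\rangle$ for $\xi,\eta\in C_c(G)$ equals $\langle T(\xi),\eta\rangle$ up to conjugations. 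These functions are dense in $L^1(\hat G)$, and the preceding lemma on the two weak* topologies on $L^\infty$ handles the comparison. The range contains $\{\hat f: f\in C_c(G)\}$, which is weak* dense in $L^\infty(\hat G)$ since $A(\hat G)$ separates $L^1(\hat G)$. This proves (i).

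For (ii) and (iii) the case $p=2$ is Plancherel. If $G$ is finite, $PM_p(G)$ is finite dimensional, $\Gamma_p$ is injective, and $\dim PM_p(G)=|G|=\dim L^\infty(\hat G)$, so $\Gamma_p$ is surjective. If $G$ is trivial, both algebras are $\mathbb{C}$ with the identity map. Conversely, suppose $p\ne2$ and $G$ is infinite. If $\Gamma_p$ were surjective, it would be a bounded bijection between Banach spaces, so by the open mapping theorem $\|T\|_{CV_p}\le C\|\Gamma_p(T)\|_\infty$. Every bounded function on $\hat G$ would then be an $L^p$-Fourier multiplier. This contradicts known results: for infinite LCA $G$ and $p\neq 2$, $M_p(\hat G)\subsetneq L^\infty$, which one shows via random sign/Khintchine arguments on an infinite Sidon-type or finite-subgroup approximating set, or by Hörmander's theorem that $CV_p=CV_2$ forces $p=2$ unless $G$ is finite. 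We expect this to be the \emph{main obstacle}. We would cite Larsen or Figà-Talamanca for $M_p\neq L^\infty$ on infinite groups, and otherwise construct unimodular multipliers $\sum \varepsilon_k\chi_{E_k}$ whose $p$-norms blow up by Khintchine's inequality.

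For (iv), if $p=2$ or $G$ is trivial we use (iii). Conversely, an isometric isomorphism preserves $C^*$-cores. We have $\mathrm{core}(L^\infty(\hat G))=L^\infty(\hat G)$, a $C^*$-algebra, while Lemma \ref{core2} gives $\mathrm{core}(PM_p(G))=\mathbb{C}$ for $p\ne2$. So $L^\infty(\hat G)=\mathbb{C}$, hence $\hat G$, and therefore $G$, is trivial. For (v), the case $p=2$ or $G$ finite follows from (ii), using a bounded inverse. Conversely, suppose $\psi:L^\infty(\hat G)\to PM_p(G)$ is an isomorphism with $p\ne2$. We need to rule out infinite $G$. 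The algebra $L^\infty(\hat G)$ is a $C^*$-algebra, hence $p$-incompressible by Remark \ref{c}, so $PM_p(G)$ is $p$-incompressible. Then $\Gamma_p$ is an injective bounded homomorphism into $L^\infty(\hat G)$, which sits in $\mathcal{B}(L^p(\hat G))$ as multiplication operators. Incompressibility therefore makes $\Gamma_p$ bounded below, so its range is closed by Lemma \ref{below}. A weak* dense range that is norm closed is not automatically everything, so we argue differently: the range is a closed $*$-subalgebra containing $A(\hat G)$, hence containing $C_0(\hat G)$, while a bounded-below $\Gamma_p$ would give $\|\lambda_p(f)\|\le C\|\hat f\|_\infty$, again contradicting the failure of $p$-multiplier bounds from (ii) when $G$ is infinite.
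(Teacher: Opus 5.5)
Parts (ii)--(iv) follow the paper's route. In (ii) you cite the known fact that not every bounded function on $\hat G$ is an $L^p$-multiplier when $G$ is infinite and $p\neq 2$. In (iv) you compare $C^*$-cores via Lemma~\ref{core2}. Part (v) uses the same $p$-incompressibility argument as the paper.

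In (i) you take a different route. You build $\Gamma_p$ as the restriction $PM_p(G)\to PM_2(G)$, using $CV_p=CV_q$ and Riesz--Thorin; this map is contractive, not isometric. You then follow it with the Plancherel symbol and get weak* continuity from Krein--Smulian on bounded nets. This is the same map as the paper's $\Gamma_2\circ\iota'$, where $\iota'$ is the dual of the contractive inclusion $A_2(G)\hookrightarrow A_p(G)$. The paper's duality argument is shorter. Yours makes injectivity transparent, by density of $L^p\cap L^2$ in $L^p$.

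The step that does not go through as written is the end of (v). You are right that a norm-closed, weak* dense range need not be everything; the paper's one-line appeal to (i), (ii) and Lemma~\ref{below} relies on exactly this point without addressing it. Your substitute is that $\|\lambda_p(f)\|\le C\|\hat f\|_\infty$ on $C_c(G)$ ``again contradicts (ii)''. That is not what (ii) gives: (ii) only says that some $\phi\in L^\infty(\hat G)$ is not in the range. Passing from an estimate on $\lambda_p(C_c(G))$ to all of $L^\infty(\hat G)$ needs an approximation and compactness step that you do not supply.

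The missing idea is that bounded below plus weak* continuity already makes the range weak* closed. Suppose $\Gamma_p(T_\alpha)\to\phi$ weak* with $\|\Gamma_p(T_\alpha)\|_\infty\le r$, and let $c>0$ be the lower bound of $\Gamma_p$.
\begin{enumerate}
\item[(1)] Then $\|T_\alpha\|\le r/c$.
\item[(2)] $PM_p(G)$ is weak* closed in $\mathcal{B}(L^p(G))$, so its balls are weak* compact, and a subnet of $(T_\alpha)$ converges weak* to some $T$.
\item[(3)] Weak* continuity gives $\phi=\Gamma_p(T)$.
\end{enumerate}
So the range meets each ball in a weak* closed set, and by Krein--Smulian it is weak* closed. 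By the density in (i) it equals $L^\infty(\hat G)$, which contradicts (ii).

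Your own observation that the range contains $C_0(\hat G)$ can be finished the same way. Approximate $\phi$ weak* by elements of $C_0(\hat G)$ of sup-norm at most $\|\phi\|_\infty$, and take a weak* cluster point of their (uniformly bounded) preimages. With this step, (v) needs nothing beyond (i) and (ii).
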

\begin{proof}
Denote by $\hat{G}$ the dual group of $G$,
by \cite[Theorem 2 on page 13]{Der}, it follows that the Gelfand transform $\Gamma_2:PM_2(G)\rightarrow L^\infty(\hat{G})$ is an isometric isomorphism.

(i) Combing with \cite[Theorem 4 on page 19]{Der} we have that the Gelfand transform $\Gamma_p: PM_p(G)\rightarrow L^\infty(\hat{G})$ is injective and contractive. Obviously, $\Gamma_p$ has dense range in weak* topology.
It follows from \cite[Theorem 7 on page55]{Der} that the inclusion $A_2(G)\rightarrow A_p(G)$ is contractive, and this inclusion is donated by $\iota$.
Let $\iota':(A_p(G))'\rightarrow (A_2(G))'$ be the dual map. Since $\iota$ is bounded, it follows $\iota'$ is weak* continuous. Note that $PM_p(G)=A_p(G)'$, $PM_2(G)=A_2(G)'$, $\Gamma_p=\Gamma_2\circ\iota'$ and $\Gamma_2$ is weak* continuous, then we have that $\Gamma_p$ is weak* continuous.

(ii) If $G$ is finite, then $\Gamma_p$ is isomorphic. If $G$ is infinite and $p\neq 2$, it follows that \cite[Remark 1 on page 54]{Der}, \cite{F} or \cite[Theorem 4.5.2]{Lar} that $\Gamma_p$ is not surjective. Hence $\Gamma_p$ is surjective if and only if $G$ is finite.

(iii) If $p=2$ or $G$ is trivial, obviously, we have that $\Gamma_2$ is an isometric isomorphism.

(iv) If $p=2$ or $G$ is trivial, then $PM_p(G)$ is isometrically isomorphic to $L^\infty(\hat{G})$.
If $p\neq 2$ and $G$ is not trivial, then the $C^*$-core of $PM_p(G)$ is $\mathbb{C}$ (see Lemma \ref{core2}) and the $C^*$-core of $L^\infty(\hat{G})$ is $L^\infty(\hat{G})$. Since the isometric isomorphisms between Banach algebras preserve $C^*$-cores, it follows that $PM_p(G)$ is not isometrically isomorphic to $L^\infty(\hat{G})$.

(v) If $p=2$ or $G$ is finite, then we have that $PM_p(G)$ is isometrically isomorphic to $L^\infty(\hat{G})$. If $p\neq 2$ and $G$ is infinite, it follows that Remark \ref{c} (ii) that $L^\infty(\hat{G})$ is $p$-incompressible.
By (i) (ii) and Lemma \ref{below}, it follows that $PM_p(G)$ is not incompressible. Since the isomorphisms between $L^p$-operator algebras preserve $p$-incompressibility, it follows that $PM_p(G)$ is not isomorphic to $L^\infty(\hat{G})$.
\end{proof}

\section{Proof of Theorem \ref{Thm1}}

In this section, we prove the Theorem \ref{Thm1}. We will construct the homomorphism $\Phi$ as compositions:
$$\begin{CD}
W^*_p\left(\hat{G},F^{p}(G,A,\alpha),\hat{\alpha}\right) @>\Phi_1>> W^*_p\left(G,
W^*_p(\hat{G},A,\beta),\hat{\beta}\otimes \alpha\right)\\
@.   @VV\Phi_2V\\
W^*_p\left(G,l^\infty(G)\bar{\otimes}A,\mathrm{lt}\otimes\mathrm{id}\right) @<\Phi_3<<W^*_p\left(G,l^\infty(G)\bar{\otimes}A,\mathrm{lt}\otimes\alpha\right)\\
@VV\Phi_4V @. \\
\mathcal{B}(l^p(G))\bar{\otimes}A .
\end{CD}$$

The precise definition of group actions and homomorphisms in this diagram will be provided in the subsections of this section.
To prove Theorem \ref{Thm1}, it suffices to prove the following theorem.
\begin{thm}\label{m}
Let $p\in(1,\infty)$. Let $\Phi=\Phi_4\circ\Phi_3\circ\Phi_2\circ\Phi_1$.
\begin{enumerate}
\item [(i)] the maps $\Phi_1$, $\Phi_3$, and $\Phi_4$ are weak* continuous isometrically isomorphic;
\item [(ii)] the map $\Phi_2$ is weak* continuous contractive injective;
\item [(iii)] the map $\Phi_2$ is isometrically isomorphic if and only if $p=2$ or $G$ is trivial;
\item [(iv)]  the map $\Phi_2$ is surjective if and only if $p=2$ or $G$ is finite;
\item [(v)] the map $\Phi$ is equivariant for the double dual action $\hat{\hat{\alpha}}$ of $G$ on $W^*_p(\hat{G},W^*_p(G,A,\alpha),\hat{\alpha})$ and the action $\mathrm{Ad}\rho_p\otimes\alpha$ of $G$ on $\mathcal{B}(l^p(G))\bar{\otimes}A$, where $$\hat{\hat{\alpha}}_{t}(F)(\gamma, s):=\overline{\gamma(t)} F(\gamma, s)$$ for all $t\in G, F\in C_c(\hat{G}\times G,A)\subset W^*_p(\hat{G},W^*_p(G,A,\alpha),\hat{\alpha})$, and
   $$(\mathrm{Ad}\rho_p\otimes\alpha)_s (T\otimes a)=(\mathrm{Ad}\rho_p)_s(T)\otimes \alpha_s(a)$$ for all $T\in \mathcal{B}(l^p(G)), a\in A$.
\end{enumerate}
\end{thm}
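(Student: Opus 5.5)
The plan is to define each of the four maps concretely, check its properties on the dense subalgebra of finitely supported (or $C_c$) functions, and pass to weak* closures using weak* continuity together with Lemma \ref{tensor} and Lemma \ref{induce}.

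\textbf{$\Phi_1$.} This is the $L^p$ analogue of the Fourier-type identification that swaps the roles of $\hat{G}$ and $G$. We let $\beta$ be the trivial action of $\hat{G}$ on $A$, so that $W^*_p(\hat{G},A,\beta)$ sits inside $L^\infty$-type operators acting on $L^p(\hat{G})\otimes_p E$. The generators $\pi(a)$, $\lambda_p^E(s)$ and $\lambda_p(\gamma)$ of the iterated crossed product act on $L^p(\hat{G})\otimes_p l^p(G)\otimes_p E$. We implement $\Phi_1$ by conjugating with the invertible isometry
$$(U\xi)(\gamma,s)=\overline{\gamma(s)}\,\xi(\gamma,s),$$
which is an isometry because $|\gamma(s)|=1$. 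Conjugation by $U$ is weak* continuous and isometric. On $C_c(\hat{G}\times G,A)$ it carries the generators to the generators of $W^*_p(G,W^*_p(\hat{G},A,\beta),\hat{\beta}\otimes\alpha)$; this is checked from Lemma \ref{dual1}. Hence $\Phi_1$ is a weak* continuous isometric isomorphism.

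\textbf{$\Phi_2$, $\Phi_3$, $\Phi_4$.}
\begin{itemize}
\item $\Phi_2$ is $\Gamma_p\otimes\mathrm{id}_A$ applied coefficientwise. Here we note that $W^*_p(\hat{G},A,\beta)=PM_p(\hat{G})\bar{\otimes}A$ when $\beta$ is trivial, and that $PM_p(\hat{G})\cong l^\infty(G)$ via the Gelfand transform of Theorem \ref{G} applied to $\hat{G}$.
\item $\Phi_3$ untwists $\alpha$ using the unitary-type operator $W(\delta_t\otimes\xi)=\delta_t\otimes\pi_0(\alpha_t)\ldots$. More precisely, it uses the map $l^\infty(G)\bar{\otimes}A\ni f\mapsto (t\mapsto \alpha_{t^{-1}}(f(t)))$, which intertwines $\mathrm{lt}\otimes\alpha$ with $\mathrm{lt}\otimes\mathrm{id}$. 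This map is $p$-completely isometric, equivariant and weak* continuous, so Lemma \ref{induce} makes $\Phi_3$ an isometric isomorphism.
\item $\Phi_4$ is the identification $W^*_p(G,l^\infty(G),\mathrm{lt})\bar{\otimes}A\cong\mathcal{B}(l^p(G))\bar{\otimes}A$. For this we show that $l^\infty(G)$ and $\lambda_p(G)$ weak* generate $\mathcal{B}(l^p(G))$: they produce all matrix units $e_{s,t}=\lambda_p(st^{-1})e_{t,t}$, and finite matrices are weak* dense. The covariant representation is realized spatially, so $\Phi_4$ is isometric.
\end{itemize}

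\textbf{Properties of $\Phi_2$.} We show that $\Gamma_p\otimes\mathrm{id}_A$ is $p$-completely contractive, so that Lemma \ref{tensor} applies. One route is to note that $\Gamma_p=\Gamma_2\circ\iota'$, so the map is the adjoint of a $p$-complete contraction between the preduals. A second route is to write $\Gamma_p$ as a weak* limit of maps of the form $x\mapsto V^*xV$ built from Fourier averaging. Granting this, $\Gamma_p\otimes\mathrm{id}$ is a weak* continuous contractive homomorphism that is equivariant for the dual action and $\mathrm{lt}$, and Lemma \ref{induce} then produces $\Phi_2$.

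\begin{itemize}
\item \emph{Injectivity:} apply the expectations $E_t$ of Lemma \ref{exp} and Lemma \ref{inj}. We have $E_t(\Phi_2(x))=(\Gamma_p\otimes\mathrm{id})(E_t(x))$, and $\Gamma_p\otimes\mathrm{id}$ is injective on $PM_p(\hat{G})\bar{\otimes}A$. The latter is seen by slicing with functionals on $A$ and using injectivity of $\Gamma_p$.
\item \emph{Surjectivity and isometry when $p=2$ or $G$ is finite/trivial:} these follow from Theorem \ref{G}(ii),(iii).
\item \emph{Failure of surjectivity for infinite $G$ and $p\neq2$:} compress to the coefficient $E_e$ and the corner $A=\mathbb{C}1$. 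Surjectivity of $\Phi_2$ would force $\Gamma_p$ to be surjective, contradicting Theorem \ref{G}(ii).
\item \emph{Failure of isometry for nontrivial $G$:} if $\Phi_2$ were isometric, its restriction would make $\Gamma_p$ isometric. This contradicts Theorem \ref{G}(iv) via the $C^*$-core, Lemma \ref{core2}.
\end{itemize}

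\textbf{Equivariance (v).} This is checked on generators. $\hat{\hat{\alpha}}_t$ multiplies by $\overline{\gamma(t)}$. Tracing this through $\Phi_1$ gives the dual action on the $PM_p(\hat{G})$ factor, which $\Gamma_p$ turns into translation $\mathrm{lt}_t$ on $l^\infty(G)$. $\Phi_3$ adjusts this by $\alpha_t$, and $\Phi_4$ turns translation of diagonal multipliers into $\mathrm{Ad}\rho_p(t)$ while fixing $\lambda_p$. We then conclude by weak* density.

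\textbf{Main obstacle.} I expect the hardest step to be the $p$-complete contractivity and tensor injectivity of $\Gamma_p\otimes\mathrm{id}_A$, because $\bar{\otimes}$ for $L^p$-spaces is poorly behaved. I would first try the predual route, viewing $A_p$ as a $p$-operator space and invoking \cite{Daws1}.
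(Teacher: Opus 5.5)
Your plan follows the paper's architecture step for step. The pieces that match are:
\begin{itemize}
\item the factorization $\Phi=\Phi_4\circ\Phi_3\circ\Phi_2\circ\Phi_1$;
\item $\Phi_1$ as conjugation by the multiplier $\overline{\gamma(s)}$ (the paper's $W$ also swaps the two coordinates, so that it lands on $L^p(G\times\hat G,E)$);
\item $\Phi_3=\varphi_3\rtimes\mathrm{id}$ via Lemma \ref{induce};
\item $\Phi_4$ via the spatial conjugation $V(\delta_s\otimes\delta_t\otimes\xi)=\delta_{st}\otimes\delta_t\otimes\xi$ (your matrix units $e_{s,t}=\lambda_p(st^{-1})e_{t,t}$ make explicit what the paper leaves implicit when it identifies the image with $\mathcal{B}(l^p(G))\bar{\otimes}\mathbb{C}I\bar{\otimes}A$);
\item injectivity of $\Phi_2$ from $E_t\circ\Phi_2=\varphi_2\circ E_t$ and Lemma \ref{inj}.
\end{itemize}

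\textbf{The gap.} The one real gap is the step you flag and then grant: $\Gamma_p$ is $p$-completely contractive. Without it, Lemma \ref{tensor} does not produce $\Gamma_p\otimes\mathrm{id}_A$. Your predual route is circular as stated. To call $\iota\colon A_2(\hat G)\to A_p(\hat G)$ a $p$-complete contraction, you must first give $A_2(\hat G)\cong l^1(G)$ the structure predual to $l^\infty(G)\subset\mathcal{B}(l^p(G))$, and checking the claim there is the original problem.

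The missing observation is the paper's Lemma \ref{cb}. For multiplication operators on $l^p(G)$, $\|[\varphi(a_{ij})]\|=\sup_t\|[\varphi(a_{ij})(t)]\|_{M_n^p}$. Pairing with unit vectors $\beta\in l^q_n$ and $\alpha\in l^p_n$ bounds this by $\|\varphi\|\,\|\beta^*[a_{ij}]\alpha\|\le\|\varphi\|\,\|[a_{ij}]\|$. Hence every bounded map into $l^\infty(G)$ satisfies $\|\varphi\|_{pcb}=\|\varphi\|$.

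Your second route can also be made exact, with no limit needed. Put $\chi_t(\gamma)=\gamma(t)$. Let $T\colon l^p(G)\to l^p(G)\otimes_pL^p(\hat G)$ be the isometry $\delta_t\mapsto\delta_t\otimes\chi_t$, and let $S$ be the contraction $\delta_t\otimes f\mapsto(\int_{\hat G}f\overline{\chi_t})\,\delta_t$. Then $\Gamma_p(x)=S(I\otimes x)T$. This holds because $\lambda_p(f)\chi_t=\hat f(t)\chi_t$ and both sides are weak* continuous. Consequently $y\mapsto(S\otimes I_E)(I\otimes y)(T\otimes I_E)$ realizes $\Gamma_p\otimes\mathrm{id}_A$ directly as a weak* continuous $p$-complete contraction.

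\textbf{Where you go beyond the paper.} Proving injectivity of $\Gamma_p\otimes\mathrm{id}_A$ by slicing with vector functionals of $E$ is sound. It is more direct than the paper's argument through uniformly bounded approximants and density of $A_2$ in $A_p$.

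More importantly, you supply converses the paper lacks. Proposition \ref{22}(iii) proves only the ``if'' half of (iii). Proposition \ref{22}(ii) just cites Lemma \ref{2}(iii), which asserts without argument that $\varphi_2$ is onto exactly when $\Gamma_p$ is. Your reductions fill this in:
\begin{itemize}
\item Surjectivity: apply $E_e$ to a preimage of $\pi(g\otimes 1)$, then slice by a normal $\omega$ on $A$ with $\omega(1)=1$. This shows surjectivity of $\Phi_2$ forces surjectivity of $\Gamma_p$.
\item Isometry: restrict to the isometric copy $x\mapsto\pi(x\otimes1)$. This shows isometry of $\Phi_2$ forces isometry of $\Gamma_p$.
\end{itemize}
Both reductions use $I_A=I_E$.

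One correction here: for infinite $G$, an isometric $\Gamma_p$ does not contradict Theorem \ref{G}(iv), which concerns isometric \emph{isomorphisms}. In that case the ``only if'' of (iii) should come from one of two sources:
\begin{itemize}
\item the non-surjectivity in (iv); or
\item the fact that a unital isometric homomorphism pulls hermitians back to hermitians. Then the nonscalar idempotent $\lambda_p(\chi_t)$, whose image is the real function $\delta_t$, would be hermitian in $PM_p(\hat G)$, contradicting Lemma \ref{core2}.
\end{itemize}

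\textbf{Two small slips.}
\begin{itemize}
\item $\Gamma_p$ does not give $PM_p(\hat G)\cong l^\infty(G)$ when $p\neq 2$ and $G$ is infinite. It is only injective with weak* dense range, as your own surjectivity argument uses.
\item Multiplication by $\overline{\gamma(t)}$ becomes the right translation $\mathrm{rt}_t$, since $\int\overline{\gamma(t)}f(\gamma)\overline{\gamma(s)}\,d\gamma=\hat f(st)$. This is what $\mathrm{Ad}\rho_p(t)$ implements on diagonal multipliers. With $\mathrm{lt}_t$ you would end up pairing $\mathrm{Ad}\rho_p(t^{-1})$ with $\alpha_t$.
\end{itemize}
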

The proof of Theorem \ref{m} follows Propositions \ref{1}, \ref{22}, \ref{3}, \ref{4}, \ref{equi}.

\subsection{The weak* continuous isometry $\Phi_1$}

Firstly, we introduce the action of $G$ on $W^*_p(\hat{G},A,\iota)$.
For each $s\in G$, we denoted by $W_s\in \mathcal{B}(L^p(\hat{G})$ be the invertible isometric operator given by $$W_s\xi(\gamma)=\overline{\gamma(s)}\xi(\gamma)$$ for all $\xi\in L^p(\hat{G})$.
Let $\iota$ be the trivial action of $\hat{G}$ on $A$. We denote by $\hat{\iota}$ the dual action of $G$ on $PM_p(\hat{G})$, that is, $$\hat{\iota}_s(f)=W_s(f)W_{s^{-1}}$$ for all $f\in PM_p(\hat{G})$.
One can check that $\hat{\iota}_s(f)(\gamma)=\overline{\gamma(s)}f(\gamma)$ for all $f\in C(\hat{G})$.
Since $\iota$ is the trivial action of $\hat{G}$ on $A$, it follows that $PM_p(\hat{G},A,\iota)=PM_p(\hat{G})\bar{\otimes}A$.
Let $\beta$ be the action of $G$ on $W^*_p(\hat{G},A,\iota)$ given by $$\beta_s=\hat{\iota}_s^{-1}\otimes\alpha.$$
One can check that $$\beta_s(F)(\gamma)=\gamma(s)\alpha_s(F(\gamma))$$ for all $F\in C(\hat{G},A,\iota)$.

\begin{lem}
Let $\alpha$ be a $p$-completely isometric action of $G$ on $A$, and let $\beta$ be given as above. Then the triple $(G,W^*_p(\hat{G},A,\iota),\beta)$ is a weak* closed $L^p$-operator algebra dynamical system.
\end{lem}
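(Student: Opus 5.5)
The plan is to check the three requirements of a weak* closed $L^p$-operator algebra dynamical system in turn. First, $W^*_p(\hat{G},A,\iota)=PM_p(\hat{G})\bar{\otimes}A$ is a weak* closed subalgebra of $\mathcal{B}(L^p(\hat{G})\otimes_p E)$. Second, each $\beta_s$ is a weak* continuous isometric automorphism of it. Third, $s\mapsto\beta_s(F)$ is continuous in the pointwise weak* topology. Since $G$ is countable discrete, the third requirement is automatic: every map out of a discrete space is continuous. The real content is therefore the definition of $\beta_s$ on the weak* closed tensor product and its isometry.

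For $\beta_s$ I would use Lemma \ref{tensor} twice. The map $\hat{\iota}_s^{-1}=\mathrm{Ad}(W_{s^{-1}})$ on $PM_p(\hat{G})$ is implemented by the invertible isometry $W_{s^{-1}}$ of $L^p(\hat{G})$. Hence it is weak* continuous, by the same dual-operator computation as Claim 1 in the proof that $\hat{\alpha}$ is continuous. It is also $p$-completely isometric, since $(\hat{\iota}_s^{-1})_n=\mathrm{Ad}(I_n\otimes W_{s^{-1}})$ and $I_n\otimes W_{s^{-1}}$ is an invertible isometry of $l^p_n\otimes_p L^p(\hat{G})$. By hypothesis $\alpha_s$ is $p$-completely isometric and weak* continuous. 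Applying Lemma \ref{tensor} to $\hat{\iota}_s^{-1}$ with $C=A$ gives a $p$-completely isometric, weak* continuous homomorphism $\hat{\iota}_s^{-1}\otimes\mathrm{id}_A$ of $PM_p(\hat{G})\bar{\otimes}A$. Applying it (with the flip) to $\alpha_s$ with $C=PM_p(\hat{G})$ gives $\mathrm{id}\otimes\alpha_s$. We then set $\beta_s=(\hat{\iota}_s^{-1}\otimes\mathrm{id})\circ(\mathrm{id}\otimes\alpha_s)$. This is a weak* continuous isometric homomorphism with $\beta_s(f\otimes a)=\hat{\iota}_s^{-1}(f)\otimes\alpha_s(a)$.

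Next I check the group law $\beta_s\beta_t=\beta_{st}$ and $\beta_e=\mathrm{id}$. It holds on elementary tensors $f\otimes a$ because $\hat{\iota}$ and $\alpha$ are homomorphisms. It extends to the whole algebra because both sides are weak* continuous and $PM_p(\hat{G})\otimes_{alg}A$ is weak* dense. Consequently each $\beta_s$ is bijective with inverse $\beta_{s^{-1}}$, so $\beta_s\in\mathrm{Aut}(W^*_p(\hat{G},A,\iota))$. The formula $\beta_s(F)(\gamma)=\gamma(s)\alpha_s(F(\gamma))$ on $C(\hat{G},A,\iota)$ also follows from the elementary-tensor case.

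The main obstacle I expect is justifying that $\mathrm{id}\otimes\alpha_s$ is isometric on the weak* closure. Lemma \ref{tensor} only asserts this when $\varphi$ is $p$-completely isometric, and this is exactly where the $p$-complete isometry hypothesis on $\alpha$ is used. One must also make sure the flip identification $L^p(\hat{G})\otimes_p E\cong E\otimes_p L^p(\hat{G})$ is an isometric, weak* homeomorphic spatial isomorphism. It is, being implemented by a measure-preserving flip of $\hat{G}\times X$, so Lemma \ref{tensor} transfers. As a fallback, isometry of $\beta_s$ follows directly from contractivity: Lemma \ref{tensor} gives $\|\beta_s\|\le 1$ and $\|\beta_{s^{-1}}\|\le 1$, and $\beta_{s^{-1}}=\beta_s^{-1}$, so $\beta_s$ is isometric.
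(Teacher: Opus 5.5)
Your proposal is correct and follows essentially the paper's route. The paper likewise writes $\beta_s$ as the composition of $\mathrm{id}_{PM_p(\hat{G})}\otimes\alpha_s$ and $\hat{\iota}_s^{-1}\otimes\mathrm{id}_A$, invokes Lemma \ref{tensor} (using that $\alpha_s$ is $p$-completely isometric) to get $\|\beta_s(F)\|=\|F\|$, and gets continuity from the discreteness of $G$. Your additional checks fill in details the paper leaves implicit: weak* continuity and $p$-complete isometry of $\mathrm{Ad}(W_{s^{-1}})$, the group law and bijectivity of $\beta_s$, the flip needed to tensor on the left, and the contractivity-plus-inverse fallback for isometry.
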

\begin{proof}
Let $\mathrm{id}_{PM_p(\hat{G})}$ and $\mathrm{id}_A$ be the identity maps on $PM_p(\hat{G})$ and $A$, respectively.
Note that $\beta_s=(\mathrm{id}_{PM_p(\hat{G})}\otimes \alpha_s)(\hat{\iota}_s^{-1}\otimes\mathrm{id}_A)$, since $\alpha_s$ is $p$-completely isometrically isomorphic, it follows from Lemma \ref{tensor} that $$\|\beta_s(F)\|=\|F\|.$$
This shows that $\beta$ is an isometric action of $G$ on $W^*_p(\hat{G},A,\iota)$.

Since $G$ is discrete, it follows that $$\beta(F):G\rightarrow W^*_p(\hat{G},A,\iota); s\mapsto \beta_s(F)$$ is continuous for all $F\in W^*_p(\hat{G},A,\iota)$.
\end{proof}

\begin{prop}\label{1}
Let $A\subset \mathcal{B}(E)$ be a weak* closed $L^p$-operator algebra, $G$ be a countable discrete locally compact Abelian group, and let $\alpha:G\rightarrow \mathrm{Aut}(A)$ be a weak* continuous isometric action. Then there exists a dual weak* $L^p$-operator algebra dynamical system $(\hat{G},W^*_p(G,A,\alpha),\hat{\alpha})$. Let $\iota$ be the trivial action of $\hat{G}$ on $A$. There exists a dual weak* $L^p$-operator algebra dynamical system $(G,W^*_p(\hat{G},A,\iota),\beta)$ such that $\left(\beta_t(f)\right)(\tau)=\tau(t)\alpha_t\left(f(\tau)\right)$ for all $f\in C_c(\hat{G},A,\iota)$.
Then there exists a weak* continuous isometric isomorphism $\Phi_1: W^*_p(\hat{G},W^*_p(G,A,\alpha),\hat{\alpha})\rightarrow W^*_p(G,W^*_p(\hat{G},A,\iota),\beta)$.
\end{prop}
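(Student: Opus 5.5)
The plan is to realize both iterated crossed products concretely on the same $L^p$-space, $L^p(\hat{G})\otimes_p l^p(G)\otimes_p E$, and to exhibit a single invertible isometry $U$ of this space such that conjugation by $U$ carries a generating set of one algebra onto a generating set of the other. Since $\mathrm{Ad}\,U$ is a weak* homeomorphism and an isometric algebra isomorphism of $\mathcal{B}(L^p(\hat{G})\otimes_p l^p(G)\otimes_p E)$, it will then map the weak* closure of one generated algebra onto the weak* closure of the other. We set $\Phi_1=\mathrm{Ad}\,U$ restricted to $W^*_p(\hat{G},W^*_p(G,A,\alpha),\hat{\alpha})$. This is the classical Takesaki/Takai first step: $(A\rtimes G)\rtimes\hat{G}\cong (A\rtimes\hat{G})\rtimes G$, where the second crossed product uses the twisted action $\beta$.

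\textbf{Generators of the left-hand side.} The algebra $W^*_p(\hat{G},W^*_p(G,A,\alpha),\hat{\alpha})$ is weak* generated by three families of operators:
\begin{itemize}
\item $\tilde{\pi}(\pi(a))$ for $a\in A$;
\item $\tilde{\pi}(\lambda_p^E(s))$ for $s\in G$;
\item $\lambda_p^{l^p(G)\otimes E}(\gamma)=\lambda_p(\gamma)\otimes I$ for $\gamma\in\hat{G}$.
\end{itemize}
Here $\tilde{\pi}(x)(\xi)(\tau)=\hat{\alpha}_{\tau^{-1}}(x)\xi(\tau)$. By Lemma \ref{dual1}, this gives
\[
\tilde{\pi}(\pi(a))=I\otimes\pi(a), \qquad \tilde{\pi}(\lambda_p^E(s))=M_s\otimes\lambda_p^E(s),
\]
where $M_s$ is multiplication by $\tau\mapsto\tau(s)$ on $L^p(\hat{G})$ (conjugated appropriately).

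\textbf{Generators of the right-hand side.} The algebra $W^*_p(G,W^*_p(\hat{G},A,\iota),\beta)$ is weak* generated by two families:
\begin{itemize}
\item $\sigma(F)(\xi)(t)=\beta_{t^{-1}}(F)\xi(t)$ for $F\in W^*_p(\hat{G},A,\iota)=PM_p(\hat{G})\bar{\otimes}A$;
\item $\lambda_p(s)$ acting on the $G$-coordinate.
\end{itemize}
Taking $F=\lambda_p(\gamma)\otimes 1$ and $F=1\otimes a$ gives a generating set, because the elementary tensors weak* generate $PM_p(\hat{G})\bar{\otimes}A$.

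\textbf{Choice of $U$.} I will take $U$ to be the multiplication unitary
\[
(U\xi)(\tau,t)=\tau(t)\,\xi(\tau,t),
\]
possibly composed with the coordinate flip $L^p(\hat{G})\otimes l^p(G)\leftrightarrow l^p(G)\otimes L^p(\hat{G})$. Both are invertible isometries of $L^p$-spaces, since they are a unimodular multiplication and a measure-preserving permutation of coordinates. The direct computations to verify are:
\begin{itemize}
\item $U(I\otimes\pi(a))U^{-1}=\sigma(1\otimes a)$, because both act on the fibre over $(\tau,t)$ by $\alpha_{t^{-1}}(a)$;
\item $U(\lambda_p(\gamma)\otimes I)U^{-1}$ equals $\lambda_p(\gamma)$ times multiplication by $\gamma(t)$, which is $\sigma(\lambda_p(\gamma)\otimes1)$, since $\beta_{t^{-1}}(\lambda_p(\gamma)\otimes 1)=\overline{\gamma(t)}\lambda_p(\gamma)\otimes 1$ up to the sign convention;
\item $U(M_s\otimes\lambda_p^E(s))U^{-1}=I\otimes\lambda_p^E(s)$, because the character factor $\tau(s)$ is absorbed when shifting $t\mapsto s^{-1}t$.
\end{itemize}
The main obstacle is getting the conjugations and signs consistent: $\overline{\gamma(s)}$ in $V_\gamma$ versus $\gamma(s)$ in $\beta$, and left versus inverse. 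I would fix conventions by checking the three identities on elementary vectors $\delta$-functions$\otimes\eta$, and adjust $U$ to $\overline{\tau(t)}$ if needed.

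\textbf{Conclusion.} Once the identities hold, $\mathrm{Ad}\,U$ maps the algebra generated by $C_c(\hat{G},C_c(G,A,\alpha))$ onto the algebra generated by $C_c(G,C_c(\hat{G},A,\iota),\beta)$, up to weak* closure. Weak* continuity of $\mathrm{Ad}\,U$ and of $\mathrm{Ad}\,U^{-1}$ (cf.\ Claim 1 in the dual action proposition) gives equality of the weak* closures. Isometry is automatic from $\|UTU^{-1}\|=\|T\|$. The same weak* continuity argument gives that $\Phi_1$ is a weak* homeomorphism.
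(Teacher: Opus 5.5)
Your proposal is correct and is essentially the paper's argument. The paper also sets $\Phi_1=\mathrm{Ad}\,W$ with $W\xi(t,\tau)=\overline{\tau(t)}\,\xi(\tau,t)$, i.e.\ the coordinate flip composed with a unimodular multiplication, but it checks $WxW^{-1}=\Phi_1(x)$, where $\Phi_1(x)(t,\tau)=\tau(t)x(\tau,t)$, directly on $C_c(\hat{G}\times G,A)$ via the integrated-form formulas rather than on your three generating families. The conjugated factor $\overline{\tau(t)}$ is indeed the one you need: with $\tau(t)$ your third identity acquires a stray factor $\tau(s)^2$, whereas with $\overline{\tau(t)}$ all three identities hold exactly, and the second produces $\overline{\gamma(t)}$, matching $\beta_{t^{-1}}(\lambda_p(\gamma)\otimes 1)=\overline{\gamma(t)}\,\lambda_p(\gamma)\otimes 1$.
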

\begin{proof}
We denote by $\mu,\nu$ be the Haar measure on $G$ and $\hat{G}$, respectively.
The algebra $W^*_p(\hat{G},W^*_p(G,A,\alpha),\hat{\alpha})$ has an isometric representation on $L^p(\hat{G}\times G,E)$.
For any $x\in C_c(\hat{G}\times G,A)$ and $\xi\in L^p(\hat{G}\times G,E)$, then $x(\tau)\in C_c(G,A,\alpha)$ and $\xi(\tau)\in L^p(G,E)$ for all $\tau\in\hat{G}$.
By the definition of integrated form we have
 $$(x\xi)(\tau)=\int_{\hat{G}}\hat{\alpha}_{\tau^{-1}}\left(x(\gamma)\right)\xi(\gamma^{-1}\tau)d\nu(\gamma),$$
and $$\begin{aligned} \left(\hat{\alpha}_{\tau^{-1}}\left(x(\gamma)\right)\xi(\gamma^{-1}\tau)\right)(t)&=
\int_G \alpha_{t^{-1}}\left(\hat{\alpha}_{\tau^{-1}}\left(x(\gamma)\right)(s)\right)\xi(\gamma^{-1}\tau,s^{-1}t)d\mu(s)
  \\ &=\int_G \overline{\gamma^{-1}(s)}\alpha_{t^{-1}}\left(x(\gamma,s)\right)\xi(\gamma^{-1}\tau,s^{-1}t)d\mu(s).\end{aligned}$$
It follows that $$x\xi(\tau,t)=\int_{\hat{G}}\int_G \gamma(s)\alpha_{t^{-1}}\left(x(\gamma,s)\right)\xi(\gamma^{-1}\tau,s^{-1}t)d\mu(s)d\nu(\gamma).$$

Similarly, the algebra $W^*_p(G,W^*_p(\hat{G},A,\iota),\beta)$ has an isometric representation on $L^p(G\times\hat{G},E)$.
For each $y\in C_c(G\times \hat{G},A)$, and $\eta\in L^p(G\times\hat{G},E)$, then $y(t)\in C_c(\hat{G},A,\iota)$ and $\eta(t)\in L^p(\hat{G},E)$ for all $t\in G$. It follows from the integrated form that $$(y\eta)(t)=\int_G \beta_{t^{-1}}\left(y(s)\right)\eta(s^{-1}t)d\mu(s).$$
Also we have
$$\begin{aligned} \left(\beta_{t^{-1}}\left(y(s)\right)\eta(s^{-1}t)\right)(\tau)&=
\int_{\hat{G}} \iota_{\tau^{-1}}\left(\beta_{t^{-1}}(y(s))(\gamma)\right)\eta(s^{-1}t,\gamma^{-1}\tau)d\nu(\gamma)
  \\ &=\int_{\hat{G}} \gamma(t^{-1})\alpha_{t^{-1}}\left(y(s,\gamma)\right)\eta(s^{-1}t,\gamma^{-1}\tau)d\nu(\gamma).\end{aligned}$$
It follows that $$y\eta(t,\tau)=\int_G\int_{\hat{G}}\overline{\gamma(t)}\alpha_{t^{-1}}\left(y(s,\gamma)\right)\eta(s^{-1}t,\gamma^{-1}\tau)d\nu(\gamma)d\mu(s).$$

We define an isomorphism $\Phi_1:C_c(\hat{G}\times G,A)\rightarrow C_c(G\times \hat{G},A)$ given by $$\left(\Phi_1(x)\right)(t,\tau)=\tau(t)x(\tau,t).$$
Furthermore, we define an isometry $W:L^p(\hat{G}\times G,E)\rightarrow L^p(G\times \hat{G},E)$ given by $$W\xi(t,\tau)=\overline{\tau(t)}\xi(\tau,t).$$
Then for each $x\in C_c(\hat{G}\times G,A)$ and $\xi\in L^p(\hat{G}\times G,E)$, we have
$$\begin{aligned} &\left(\Phi_1(x)(W\xi)\right)(t,\tau)\\
&= \int_{G}\int_{\hat{G}} \overline{\gamma(t)}\alpha_{t^{-1}}(\Phi_1(x)(s,\gamma))W\xi(s^{-1}t,\gamma^{-1}\tau)d\nu(\gamma)d\mu(s)
  \\ &=\int_{G}\int_{\hat{G}}
  \overline{\gamma(t)}\alpha_{t^{-1}}(\gamma(s)x(\gamma,s))\overline{(\gamma^{-1}\tau)(s^{-1}t)}\xi(\gamma^{-1}\tau,s^{-1}t)d\nu(\gamma)d\mu(s)
\\&=\overline{\tau(t)}\int_{\hat{G}}\int_{G} \gamma(s)\alpha_{t^{-1}}(x(\tau,s))\xi(\gamma^{-1}\tau,s^{-1}t)d\mu(s)d\nu(\gamma)
\\&=\overline{\tau(t)}(x\xi)(\tau,t)=W(x\xi)(t,\tau).
\end{aligned}$$
This implies that $\Phi_1(x)=WxW^{-1}.$
For each $x\in W^*_p(\hat{G},W^*_p(G,A,\alpha),\hat{\alpha})$, we still define $\Phi_1(x)=WxW^{-1}.$

{\it Claim 1.} If $\{x_n\}$ is a sequence in $C_c(G\times\hat{G},A)$ such that $\{x_n\}$ converges to $x$ in weak* topology, then $\{\Phi_1(x_n)\}$ converges to $\Phi_1(x)\in W^*_p(G,W^*_p(\hat{G},A,\iota),\beta)$ in weak* topology.

Let $\{x_n\}$ be given as in Claim 1.
For each $\omega=\sum_{i=1}^\infty \zeta_i\otimes \theta_i\in L^q(G\times\hat{G},E')\widehat{\otimes}L^p(G\times\hat{G},E)$, then $\sum_{i=1}^\infty W'\zeta_i\otimes W^{-1}\theta_i\in L^q(\hat{G}\times G,E')\widehat{\otimes}L^p(\hat{G}\times G,E)$.
A direct computation shows that $$\begin{aligned} \la \Phi_1(x_n),\omega \ra&=
\sum_{i=1}^\infty \la \zeta_i, Wx_n W^{-1} \theta_i\ra
  \\ &=\sum_{i=1}^\infty\la W'\zeta_i,x_\alpha W^{-1} \theta_i\ra
\\&=\la x_n, \sum_{i=1}^\infty W'\zeta_i\otimes W^{-1}\theta_i\ra
\\&\rightarrow \la x, \sum_{i=1}^\infty W'\zeta_i\otimes W^{-1}\theta_i\ra
\\&=\la \Phi_1(x),\omega\ra .
\end{aligned}$$
This prove Claim 1.
Using similar method from Claim 1, one can check that $\Phi_1$ is weak* continuous. Obviously, $\Phi_1$ is isometric.
\end{proof}
%Since $C_c(G\times \hat{G},A)$ is weak* dense in $W^*_p(G,W^*_p(\hat{G},A,\iota),\beta)$ and $C_c(\hat{G}\times G,A)$ is weak* dense in $W^*_p(\hat{G},W^*_p(G,A,\alpha),\hat{G})$, it follows that $\Phi_1$ is a weak* continuous isometric isomorphism from $W^*_p(G,W^*_p(\hat{G},A,\iota),\beta)$ onto $W^*_p(\hat{G},W^*_p(G,A,\alpha),\hat{G})$.

\subsection{The Gelfand transform properties of $\Phi_2$}

In this subsection, we study the properties of $\phi_2$. This map is essentially  induced by Gelfand transform $\Gamma_p:PM_p(\hat{G})\rightarrow l^\infty(G)$. We will see that $\Phi_2$ share similar properties of $\Gamma_p$. To show the Gelfand transform is $p$-completely contractive, we prove the following lemma.
\begin{lem}\label{cb}
Let $A\subset \mathcal{B}(L^p(X,\mu))$ be a norm closed subalgebra, and let $B=L^\infty(Y,\nu)\subset \mathcal{B}(L^p(X,\nu))$. If $\varphi:A\rightarrow B$ is a bounded linear map, then $\|\varphi\|_{cb}=\|\varphi\|.$
\end{lem}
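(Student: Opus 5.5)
We show that every amplification $\varphi_n$ has norm at most $\|\varphi\|$; since $\|\varphi_1\| = \|\varphi\|$, this gives $\|\varphi\|_{pcb}=\|\varphi\|$. The key point is that the target $B=L^\infty(Y,\nu)$ is a commutative algebra of multiplication operators, so $M_n^p\otimes_p B$ can be identified with $L^\infty(Y,\nu; M_n^p)$. Concretely, identify $l^p_n\otimes_p L^p(Y,\nu)$ with $L^p(Y,\nu; l^p_n)$. Under this identification, an element $[b_{ij}]\in M_n^p\otimes_p B$ acts by pointwise matrix multiplication, $(\xi)(y)\mapsto [b_{ij}(y)]\xi(y)$. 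Its norm is therefore the essential supremum
$$\|[b_{ij}]\|=\operatorname*{ess\,sup}_{y\in Y}\|[b_{ij}(y)]\|_{M_n^p}.$$
The inequality $\le$ is immediate. For $\ge$, localize $\xi$ on small sets where the matrix function is nearly constant: approximate the $b_{ij}$ by simple functions, or use Lebesgue differentiation on a $\sigma$-finite space.

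Next we evaluate pointwise. Fix $x=[a_{ij}]\in M_n^p\otimes_p A$ and unit vectors $u\in l^q_n$, $v\in l^p_n$. For almost every $y$ we have
$$\langle u,[\varphi(a_{ij})(y)]v\rangle=\sum_{i,j}u_i v_j\,\varphi(a_{ij})(y)=\varphi\Big(\sum_{i,j}u_iv_j a_{ij}\Big)(y).$$
Here $\sum_{i,j}u_iv_ja_{ij}=(u\otimes 1)\,x\,(v\otimes 1)$ is a compression of $x$ by the row contraction $u\otimes 1: l^p_n\otimes_p L^p(X)\to L^p(X)$ and the column contraction $v\otimes 1: L^p(X)\to l^p_n\otimes_p L^p(X)$. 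Both maps have norm at most $1$, so this element lies in $A$ with norm at most $\|x\|$.

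Because a function in $L^\infty$ has absolute value at most its norm almost everywhere, we get
$$|\langle u,[\varphi(a_{ij})(y)]v\rangle|\le\|\varphi\|\,\|x\|\quad\text{for a.e. } y.$$
To take the supremum over $u,v$ inside the essential supremum, we restrict to a countable dense set of pairs $(u,v)$ in the two unit spheres. This is legitimate because the unit spheres of $l^q_n$ and $l^p_n$ are separable and the expression is continuous in $(u,v)$. The exceptional null set is then a countable union of null sets, hence null. It follows that $\|[\varphi(a_{ij})(y)]\|_{M_n^p}\le\|\varphi\|\,\|x\|$ almost everywhere, and by the first step $\|\varphi_n(x)\|\le\|\varphi\|\,\|x\|$.

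The main obstacle is the first step, the norm identification of $M_n^p\otimes_p L^\infty$, specifically the lower bound $\ge$. We handle it by choosing, for $\varepsilon>0$, a set $S$ of positive finite measure on which each $b_{ij}$ varies by less than $\varepsilon$ and $\|[b_{ij}(y)]\|$ is within $\varepsilon$ of the essential supremum. We then test the operator on vectors of the form $\xi=v\,\chi_S/\nu(S)^{1/p}$. If this localization proves awkward, we only need the upper bound for $\varphi_n(x)$, and that direction is the trivial one; so the argument goes through with just $\le$ applied to the $B$-side matrix.
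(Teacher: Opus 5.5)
Your proposal is correct and follows essentially the same route as the paper: bound the norm of $[\varphi(a_{ij})]$ by the (essential) supremum over $y$ of the pointwise $M_n^p$-norms, write each of these as a pairing $\langle u,[\varphi(a_{ij})(y)]v\rangle=\varphi\big(\sum_{i,j}u_iv_ja_{ij}\big)(y)$, and then estimate the compression $\sum_{i,j}u_iv_ja_{ij}$ by $\|x\|$. Your version is somewhat more careful than the paper's, which writes $\sup_{y\in Y}$ and does not address the null sets depending on $(u,v)$ that you remove with a countable dense set; you also correctly observe that only the trivial upper bound in the norm identification is needed.
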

\begin{proof}
Since $L^p$-operator algebras are $p$-operator spaces, we will use the axioms (see \cite[axiom $\mathcal{M}_p$]{Daws1}) of $p$-operator spaces  to prove this Lemma.
Let $a=\sum_{i,j=1}^n e_{i,j}\otimes a_{i,j}\in M_n^p\otimes_p A$.
For each $\beta=\{\beta_1,\cdots,\beta_n\}^{T}\in \mathbb{C}^n$, we let $\beta^*=\{\beta_1,\cdots,\beta_n\},$ then we have the following
$$\begin{aligned} &\left\|\mathrm{id}_{M_n^p}\otimes \varphi(\sum_{i,j=1}^n e_{i,j}\otimes a_{i,j})\right\|
\\&=\left\|\sum_{i=1}^n(e_{i,j}\otimes \varphi(a_{i,j})\right\|
  \\ &=\sup_{y\in Y}\left\|\sum_{i,j=1}^n e_{i,j}\otimes \varphi(a_{i,j})(y)\right\|
\\&=\sup_{y\in Y,\alpha,\beta\in \mathbb{C}^n, \|\alpha\|_p=\|\beta\|_q=1}\left\la\beta,\left(\sum_{i,j=1}^n e_{i,j}\otimes \varphi(a_{i,j})(y)\right)\alpha\right\ra
\\&=\sup_{y\in Y,\alpha,\beta\in \mathbb{C}^n, \|\alpha\|_p=\|\beta\|_q=1}\left|\sum_{i,j=1}^n \beta_j\varphi(a_{i,j})(y)\alpha_i\right|
\\&=\sup_{y\in Y,\alpha,\beta\in \mathbb{C}^n, \|\alpha\|_p=\|\beta\|_q=1} \left|\varphi\left(\beta^*\left(\sum_{i,j=1}^ne_{i,j}\otimes a_{i,j}\right)\alpha\right)(y)\right|
\\&\leq \|\varphi\|\sup_{\alpha,\beta\in \mathbb{C}^n, \|\alpha\|_p=\|\beta\|_q=1} \left\|\beta^*\left(\sum_{i,j=1}^ne_{i,j}\otimes a_{i,j}\right)\alpha\right\|
\\&\leq \|\varphi\|\left\|\sum_{i,j=1}^ne_{i,j}\otimes a_{i,j}\right\|.
\end{aligned}$$
This shows that $\|\mathrm{id}_{M_n^p}\otimes \varphi\|\leq \|\varphi\|$ for all positive integer $n$, and thus $\|\mathrm{id}_{M_n^p}\otimes \varphi\|= \|\varphi\|$.
\end{proof}

Let $\mathrm{lt}\otimes\alpha$ be the action of $G$ on $l^\infty(G)\bar{\otimes}A$ given by $$(\mathrm{lt}\otimes\alpha)_s(f)(t)=\alpha_s(f(s^{-1}t))$$ for all $f\in l^\infty(G)\bar{\otimes}A$.
Recall that $\beta$ is the action of $G$ on $W^*_p(\hat{G},A,\iota)$ given by $$\left(\beta_t(f)\right)(\tau)=\tau(t)\alpha_t\left(f(\tau)\right)$$ for all $f\in C(\hat{G},A,\iota)$, $\tau\in\hat{G}$ and $t\in G$.

\begin{lem}\label{2}
Let $G$ be a countable discrete group.
Let $\varphi_2:=\Gamma_p\otimes \mathrm{id}_A:W^*_p(\hat{G},A,\iota)\rightarrow l^\infty(G)\bar{\otimes} A$. Then the following hold:
\begin{enumerate}
\item[(i)] the map $\varphi_2$ is weak* continuous $p$-completely contractive;
\item[(ii)]  the map $\varphi_2$ is injective with dense range in weak* topology;
\item[(iii)] the map $\varphi_2$ is surjective if and only if either $G$ is finite or $p=2$;
\item[(iv)] the map $\varphi_2$ is isometrically isomorphic if either $p=2$ or $G$ is trivial;
\item[(v)] $\varphi_2$ is equivariant for the action $\beta$ of $G$ on $W^*_p(\hat{G},A,\iota)$ and the action $\mathrm{lt}\otimes\alpha$ of $G$ on $l^\infty(G)\bar{\otimes} A$.
\end{enumerate}
\end{lem}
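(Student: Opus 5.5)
The plan is to obtain $\varphi_2$ by tensoring the Gelfand transform $\Gamma_p:PM_p(\hat{G})\rightarrow L^\infty(\hat{\hat{G}})=l^\infty(G)$ of Theorem \ref{G} (applied to the compact abelian group $\hat{G}$, whose dual is the discrete group $G$) with $\mathrm{id}_A$. We use the identification $W^*_p(\hat{G},A,\iota)=PM_p(\hat{G})\bar{\otimes}A$ noted before Proposition \ref{1}.

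\textbf{Step 1: part (i).} By Theorem \ref{G}(i), $\Gamma_p$ is contractive and weak* continuous. Its range $l^\infty(G)$ acts as multiplication operators on $l^p(G)$. Lemma \ref{cb} then gives $\|\Gamma_p\|_{pcb}=\|\Gamma_p\|\le 1$. Lemma \ref{tensor}, applied with $C=A$ and a flip of the tensor factors, produces a weak* continuous $p$-completely contractive homomorphism $\varphi_2=\Gamma_p\otimes\mathrm{id}_A$. This proves (i).

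\textbf{Step 2: part (ii).} For injectivity, I would use slice maps. For $\omega\in\mathcal{N}(E)$, let $R_\omega$ be the weak* continuous right slice map (into $PM_p(\hat{G})$ on the domain side, into $l^\infty(G)$ on the target side). On elementary tensors $R_\omega\circ\varphi_2=\Gamma_p\circ R_\omega$, and by weak* continuity this identity extends to all of $PM_p(\hat{G})\bar{\otimes}A$. If $\varphi_2(x)=0$, then $\Gamma_p(R_\omega x)=0$ for every $\omega$, so $R_\omega x=0$ by injectivity of $\Gamma_p$. Since $PM_p(\hat{G})\bar{\otimes}A$ sits in $\mathcal{B}(L^p(\hat{G})\otimes_p E)$ and the pairings against elementary normal functionals separate points, $x=0$. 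An alternative is to use the coefficient maps $E_t$ of Lemmas \ref{exp} and \ref{inj}.

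For density, note that the image contains $\Gamma_p(PM_p(\hat{G}))\otimes_{alg}A$. This contains $c_c(G)\otimes A$, because the Fourier transform sends trigonometric polynomials on $\hat{G}$ onto finitely supported functions on $G$. The set $c_c(G)\otimes A$ is weak* dense in $l^\infty(G)\bar{\otimes}A$.

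\textbf{Step 3: parts (iii)--(v).} For (iii), if $G$ is finite, $\Gamma_p$ is a bijective homomorphism between finite-dimensional spaces. Then $\Gamma_p^{-1}$ is bounded, and by Lemma \ref{cb} it is also completely bounded, so $\Gamma_p^{-1}\otimes\mathrm{id}_A$ inverts $\varphi_2$. If $p=2$, $\Gamma_2$ is a $*$-isomorphism, and the same argument works. Conversely, suppose $G$ is infinite and $p\neq2$. Pick a unit vector pair $\xi\in E'$, $\eta\in E$ with $\langle\xi,\eta\rangle=1$. The slice map $R$ by $\xi\otimes\eta$ maps $l^\infty(G)\bar{\otimes}A$ onto $l^\infty(G)$, since $f\otimes I_A\mapsto f$. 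If $\varphi_2$ were onto, then $\Gamma_p\circ R=R\circ\varphi_2$ would be onto as well, so $\Gamma_p$ would be surjective, contradicting Theorem \ref{G}(ii).

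For (iv), when $p=2$ or $G$ is trivial, $\Gamma_p$ is a $p$-complete isometry. In the trivial case both sides are $\mathbb{C}$; in the case $p=2$ this follows from the Hilbert space theory of the Fourier transform. Lemma \ref{tensor} then makes $\varphi_2$ isometric, and by (iii) it is onto.

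For (v), it suffices by weak* continuity to check equivariance on $f\otimes a$ with $f$ a trigonometric polynomial $\gamma\mapsto\gamma(r)$, i.e. $\lambda_p(\cdot)$-type generators. We have $\beta_s(f\otimes a)=\hat{\iota}_s^{-1}(f)\otimes\alpha_s(a)$, and $\hat{\iota}_s^{-1}$ multiplies by $\gamma(s)$. Under $\Gamma_p$ this multiplication becomes translation by $s$ on $l^\infty(G)$, i.e. $\mathrm{lt}_s$. A short character computation gives $\mathrm{lt}_s\circ\Gamma_p=\Gamma_p\circ\hat{\iota}_s^{-1}$.

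\textbf{Main obstacle.} I expect the hard step to be the injectivity of $\Gamma_p\otimes\mathrm{id}_A$ on the weak* tensor product, i.e. establishing the slice-map compatibility rigorously. Surjectivity of the reverse direction in (iii) also needs care: I must verify that the slice map $R$ carries $PM_p(\hat{G})\bar{\otimes}A$ into $PM_p(\hat{G})$.
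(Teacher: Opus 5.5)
Your argument is correct. Parts (i) and (v) follow the paper, but for injectivity in (ii) and the converse in (iii) you take a different route that is tighter than the paper's.

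\textbf{Where you agree with the paper.} Part (i) is Lemma \ref{cb} combined with Lemma \ref{tensor}. Part (v) is a check on a weak*-dense family of simple elements. The paper uses $C(\hat{G},A)$ and the Fourier formula $\hat{f}(t)=\int_{\hat{G}}f(\gamma)\overline{\gamma(t)}\,d\mu(\gamma)$. You use trigonometric-polynomial tensors with the same sign convention.

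\textbf{Injectivity in (ii).} The paper approximates $F$ by a bounded sequence of algebraic tensors. It then uses Daws' Theorem 6.3 to test weak* convergence only on functionals $\omega\otimes\eta$. Finally it uses norm density of $\{\sigma\circ\Gamma_p:\sigma\in l^1(G)\}$ in the predual of $PM_p(\hat{G})$, which comes from the dense contractive inclusion $A_2\to A_p$. Your intertwining $R_\omega\circ\varphi_2=\Gamma_p\circ R_\omega$ needs only injectivity and weak* continuity of $\Gamma_p$. It also avoids having to produce a bounded approximating sequence.

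The point you flag as delicate is immediate. We have $R_\omega(f\otimes a)=\la a,\omega\ra f$, $R_\omega$ is weak* continuous, and $PM_p(\hat{G})$ is weak* closed.

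\textbf{The converse in (iii).} The paper simply asserts that surjectivity of $\varphi_2$ is equivalent to surjectivity of $\Gamma_p$. Your slice by $\xi\otimes\eta$ with $\la\xi,\eta\ra=1$, using $I_A=I_E\in A$, actually proves the nontrivial ``only if'' direction.

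\textbf{Part (iv).} You correctly restrict to $G$ trivial. The paper's proof text says ``$G$ finite'', which cannot give an isometry when $p\neq 2$.

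\textbf{Small repairs.}
\begin{enumerate}
\item[(a)] For finite $G$ in (iii), Lemma \ref{cb} does not apply to $\Gamma_p^{-1}$, because its codomain is $PM_p(\hat{G})$ rather than an $L^\infty$-algebra. Also, Lemma \ref{tensor} is stated only for $p$-completely contractive maps. It is simpler to note that $l^\infty(G)\bar{\otimes}A=l^\infty(G)\otimes_{alg}A$, namely the block-diagonal operators with entries in $A$, which is already weak* closed. This lies in the range because $\varphi_2(\Gamma_p^{-1}(\delta_t)\otimes a)=\delta_t\otimes a$.
\item[(b)] For $p=2$, the complete isometry and weak* continuity of $\Gamma_2^{-1}$ come from $\Gamma_2$ being spatially implemented by the Plancherel unitary, not from Lemma \ref{cb}.
\item[(c)] Your suggested alternative via the maps $E_t$ of Lemmas \ref{exp} and \ref{inj} is unavailable here. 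Those lemmas concern discrete groups, and $\hat{G}$ is compact and not discrete when $G$ is infinite.
\end{enumerate}
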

\begin{proof}
(i) Note that $\iota$ is the trivial action of $\hat{G}$ on $A$, then we have that $W^*_p(\hat{G},A,\iota)=PM_p(\hat{G})\bar{\otimes} A$.
Let $\Gamma_p:PM_p(\hat{G})\rightarrow L^\infty(G)$ be the Gelfand transform. By Lemma \ref{cb}, it follows that $\Gamma_p$ is $p$-completely contractive. Then by Lemma \ref{tensor}, there exists a weak* continuous $p$-completely contractive homomorphism $\hat{\Gamma}_p:PM_p(\hat{G})\bar{\otimes}A\rightarrow L^\infty(G)\bar{\otimes}A$ such that $\hat{\Gamma}_p(f\otimes a)=\Gamma_p(f)\otimes a$ for all $f\in PM_p(\hat{G})$ and $a\in A$. We denote by $\varphi_2=\Gamma_p\otimes \mathrm{id}_A=\hat{\Gamma}_p$. This proves (i).

(ii) Obviously, $\varphi_2$ has dense range.
Now we shall prove that $\varphi_2$ is injective.
For each $F\in PM_p(\hat{G})\bar{\otimes}A$, we assume that $\varphi_2(F)=0$. Let $F_n=\sum_{i=1}^{m(n)}f_i^{(n)}\otimes a_i^{(n)}\in PM_p(\hat{G})\otimes_{alg}A$ such that $\{F_n\}$ converges to $F$ in weak* topology. By uniform boundedness principle we may assume that $\{F_n\}$ is uniformly bounded. Therefore, by \cite[Theorem 6.3]{Daws1}, to prove $\{F_n\}$ converges to $0$ in weak* topology, it suffices to show that $$\lim_{n\rightarrow \infty}\omega\otimes\eta(F_n)=0,$$ where $\omega\in PM_p(\hat{G})_*$ and $\eta\in A_*$.
Note that the inclusion $A_2(G)\rightarrow A_p(G)$ is contractive with dense range (see \cite[Theorem C]{Herz} and \cite[Proposition 1.1]{Runde}),
then it suffices to show that $$\lim_{n\rightarrow \infty}(\sigma\circ\Gamma_p)\otimes\eta(F_n)=0,$$ where $\sigma\in l^\infty(G)_*\cong l^1(G)$.
Since $\varphi_2$ is weak* continuous, it follows that
$$\begin{aligned} (\sigma\circ\Gamma_p)(F_n) &=
\sum_{i=1}^{m(n)}\sum_{t\in G}\Gamma_p(f_i^{(n)})(t)\sigma(t)\eta(a_i^{(n)})\\
 &=\sigma\otimes\eta(\varphi_2(F_n))\rightarrow 0. \end{aligned}$$
This implies that $F=0$. Hence $\varphi_2$ is injective.

(iii) Let $p\neq 2$. By Theorem \ref{G},  it follows that $\Gamma_p$ is surjective if and only is $G$ is finite. Hence we have that $\varphi_2$ is surjective if and only if $G$ is finite.

(iv) If either $G$ is finite or $p=2$, it follows from Lemma \ref{tensor} that $\varphi_2$ is isometrically isomorphic.

(v) The Gelfand transformation $\Gamma_p:W^*_p(\hat{G})\rightarrow l^\infty(G)$ sends $f\in C(\hat{G})$ to its Fourier transform  $\hat{f}\in C_0(G)$ which is given by $$\hat{f}(t)=\int_{\hat{G}} f(\gamma)\overline{\gamma(t)}d\mu(\gamma).$$
Hence the homomorphism $\varphi_2=\Gamma_p\otimes \mathrm{id}_A:W^*_p(\hat{G},A,\iota)\rightarrow l^\infty(G)\bar{\otimes} A$ satisfies
$$\varphi_2(g)(t)=\int_{\hat{G}}g(\gamma)\overline{\gamma(t)}d\mu(\gamma)$$ for all $g\in C(\hat{G},A,\iota)$.
Since
$$\begin{aligned} \varphi_2(\beta_{s}(g))(t) &= \int_{\hat{G}}\beta_{s}(g)(\gamma)\overline{\gamma(t)}d\mu(\gamma) \\
 &=\int_{\hat{G}}\gamma(s)\alpha_{s}(g(\gamma))\overline{\gamma(t)}d\mu(\gamma)\\
 &=\alpha_{s}(\int_{\hat{G}}g(\gamma)\overline{\gamma(s^{-1}t)}d\mu(\gamma))\\
 &=(\mathrm{lt}\otimes\alpha)_{s}(\varphi_2(g))(t), \end{aligned}$$
and $\varphi_2$, $\beta_s$ and $(\mathrm{lt}\otimes\alpha)_{s}$ is weak* continuous,
it follows that $\varphi_2$ is equivariant.\end{proof}

By Lemma \ref{induce} and Lemma \ref{2}, there exists a weak* continuous contractive homomorphism $$\Phi_2:=\varphi_2\rtimes\mathrm{id}:W^*_p(G,W^*_p(\hat{G},A,\iota),\beta)\rightarrow W^*_p(G,l^\infty{G}\bar{\otimes} A,\mathrm{lt}\otimes\alpha).$$
\begin{prop} \label{22}
With the notations given above, then we have the following
\begin{enumerate}
\item[(i)] the map $\Phi_2$ is injective and has dense range in weak* topology;
\item[(ii)] the map $\Phi_2$ is surjective if and only if $G$ is finite or $p=2$;
\item [(iii)] the map $\Phi_2$ is isometrically isomorphic if either $p=2$ or $G$ is trivial.
\end{enumerate}
\end{prop}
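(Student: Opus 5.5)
The strategy is to transfer the properties of $\varphi_2$ from Lemma \ref{2} to $\Phi_2=\varphi_2\rtimes\mathrm{id}$. The tools are the coefficient maps $E_t$ of Lemma \ref{exp} together with the uniqueness statement of Lemma \ref{inj}. Write $B=W^*_p(\hat{G},A,\iota)$ and $C=l^\infty(G)\bar{\otimes}A$, with coefficient maps $E_t^B:W^*_p(G,B,\beta)\to B$ and $E_t^C:W^*_p(G,C,\mathrm{lt}\otimes\alpha)\to C$.

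The first step is the intertwining relation
$$E_t^C\circ\Phi_2=\varphi_2\circ E_t^B\qquad\text{for all } t\in G.$$
On $C_c(G,B,\beta)$ this is immediate from the formula $\Phi_2(\sum_s \widetilde{\pi_1}(b_s)\lambda_p(s))=\sum_s\widetilde{\pi_2}(\varphi_2(b_s))\lambda_p(s)$ obtained in the proof of Lemma \ref{induce}. All four maps are weak* continuous (Lemma \ref{exp}, Lemma \ref{2}(i), Lemma \ref{induce}), so the identity extends to the whole crossed product by weak* density.

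Injectivity then follows. If $\Phi_2(x)=0$, then $\varphi_2(E_t^B(x))=0$ for every $t$. Since $\varphi_2$ is injective by Lemma \ref{2}(ii), we get $E_t^B(x)=0$ for all $t$, and Lemma \ref{inj} gives $x=0$.

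Weak* density of the range holds because the range contains $\Phi_2(C_c(G,B,\beta))$, which consists of the finite sums $\sum_s \widetilde{\pi_2}(\varphi_2(b_s))\lambda_p(s)$. Given $c\in C$, pick a net $b_\lambda$ with $\varphi_2(b_\lambda)\to c$ weak*, using the dense range in Lemma \ref{2}(ii). Then $\widetilde{\pi_2}(\varphi_2(b_\lambda))\lambda_p(s)\to\widetilde{\pi_2}(c)\lambda_p(s)$ weak*, since $\widetilde{\pi_2}=\sum_t e_{t,t}\otimes\alpha_{t^{-1}}(\cdot)$ is weak* continuous and right multiplication by a fixed operator is weak* continuous. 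So the weak* closure of the range contains $C_c(G,C,\mathrm{lt}\otimes\alpha)$, which is weak* dense.

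For (iii), if $p=2$ or $G$ is trivial, Lemma \ref{2}(iv) says $\varphi_2$ is an isometric isomorphism. I would upgrade this to a $p$-completely isometric isomorphism. For $p=2$ it is a $*$-isomorphism of von Neumann algebras; for trivial $G$ it is the identity map up to identifications. The last clause of Lemma \ref{induce} then gives that $\Phi_2$ is a weak* continuous isometric isomorphism.

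For (ii), the case $p=2$ is covered by (iii). If $G$ is finite, $\varphi_2$ is a bijection, and by the open mapping theorem its inverse is bounded. The inverse is also $p$-completely bounded, because all the algebras involved sit inside matrix algebras over $A$ of size $|G|$. Equivariance passes to the inverse. Applying Lemma \ref{induce}, or rather its proof via Lemma \ref{tensor} (boundedness in place of contractivity suffices there), to $\varphi_2^{-1}$ produces an inverse for $\Phi_2$. In fact, for finite $G$ the crossed products are just finite sums, so surjectivity can be checked directly.

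Conversely, suppose $\Phi_2$ is surjective. Take any $c\in C$ and $x$ with $\Phi_2(x)=\widetilde{\pi_2}(c)$. Then $c=E_e^C(\Phi_2(x))=\varphi_2(E_e^B(x))$, so $\varphi_2$ is surjective. By Lemma \ref{2}(iii) this forces $G$ finite or $p=2$.

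The main obstacle is the weak* extension of the intertwining identity, including the precise form of $E_e^C$ with the twist $\alpha_{t^{-1}}$ in $\widetilde{\pi_2}$. A second delicate point is that the hypotheses of Lemma \ref{induce} require $p$-complete isometry, not just isometry, in the $p=2$ and trivial cases. If the latter resists, I would instead argue isometry directly via the intertwining with Lemma \ref{tensor} applied to $\Gamma_p^{-1}$.
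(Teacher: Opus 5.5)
Your proposal is correct and follows the paper's route: the intertwining $E_t\circ\Phi_2=\varphi_2\circ E_t$ together with Lemma \ref{inj} gives injectivity, Lemma \ref{induce} gives (iii), and Lemma \ref{2}(iii) gives (ii). You also make explicit several steps the paper leaves implicit: deducing surjectivity of $\varphi_2$ from surjectivity of $\Phi_2$ via $E_e$, the finite-sum argument for finite $G$, and the need for a $p$-complete isometry in (iii); for $p=2$, say that $\Gamma_2$ (not $\varphi_2$, since $A$ need not be self-adjoint) is a spatially implemented $*$-isomorphism, and then invoke Lemma \ref{tensor}.
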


\begin{proof}[Proof of Proposition \ref{2}]
(i) By Lemma \ref{exp}, we have the following
commutative diagram
$$\begin{CD}
  W^*_p(A,W^*_p(\hat{G},A,\iota),\beta) @ >\Phi_2>>W^*_p(G,l^\infty(G)\bar{\otimes} A,\mathrm{lt}\otimes\alpha) \\
     @VV E_s V @VV E_s V\\
     W^*_p(\hat{G},A,\iota) @>\varphi_2>> l^\infty(G)\bar{\otimes}A
\end{CD}.$$
Choose $F\in  W^*_p(A,W^*_p(\hat{G},A,\iota),\beta)$ such that $\Phi_2(F)=0$. Then $\varphi_2\circ E_s(F)=0$ for all $s\in G$.
Since $\varphi_2$ is injective by Lemma \ref{2}, it follows that $E_s(F)=0$ for all $s\in G$. This implies that $F=0$ by Lemma \ref{inj}.
Note that $\varphi_2$ has dense range and $\Phi_2$ is induced by $\varphi_2$, then so does $\Phi_2$. Obviously, $\Phi_2$ has dense range in weak* topology.

(ii) Let $p\in (1,\infty)\setminus\{2\}$. By Lemma \ref{2} (iii), it follows that $\Phi_2$ is surjective if and only if $G$ is finite.

(iii) If either $p=2$ or $G$ is trivial, it follows from Lemma \ref{induce} that $\Phi_2$ is isometrically isomorphic.
\end{proof}

\subsection{The weak* continuous isometry $\Phi_3$}

Let $\mathrm{id}$ be the trivial action of $G$ on $A$. Then we can form the tensor product action of $G$ on $l^\infty(G)\bar{\otimes}A$ by $(\mathrm{lt}\otimes\mathrm{id})_tf(s):=f(t^{-1}s)$.

\begin{prop}\label{3}
Let $\mathrm{lt}\otimes\mathrm{id}$ be defined as above. If, in addition, $\alpha$ is a $p$-completely isometric action of $G$ on $A$, then
there exists a weak* continuous isometric isomorphism $$\Phi_3:W^*_p(G,l^\infty(G)\bar{\otimes}A,\mathrm{lt}\otimes\alpha)\rightarrow W^*_p(G,l^\infty(G)\bar{\otimes}A,\mathrm{lt}\otimes\mathrm{id})$$ such that $$\Phi_3(F)(s,t)=\alpha_{t^{-1}}(F(s,t))$$ for all $F\in C_c(G,l^\infty(G)\bar{\otimes}A,\mathrm{lt}\otimes\alpha)$.
\end{prop}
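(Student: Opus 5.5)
The plan is to realise $\Phi_3$ as the restriction of the map induced, via Lemma \ref{induce}, by a single equivariant $p$-completely isometric weak* continuous automorphism of $l^\infty(G)\bar{\otimes}A$ that intertwines $\mathrm{lt}\otimes\alpha$ with $\mathrm{lt}\otimes\mathrm{id}$. Concretely, I define
$$\theta:l^\infty(G)\bar{\otimes}A\rightarrow l^\infty(G)\bar{\otimes}A,\qquad \theta(f)(t)=\alpha_{t^{-1}}(f(t)).$$
Identify $l^p(G)\otimes_p E$ with $\bigoplus_{t\in G}E$, so that $l^\infty(G)\bar{\otimes}A$ consists of the diagonal operators $\sum_t e_{t,t}\otimes f(t)$ with $\sup_t\|f(t)\|<\infty$. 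Then $\theta$ acts coordinatewise by the isometric automorphisms $\alpha_{t^{-1}}$.

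The key steps are as follows.

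First, I show that $\theta$ is an isometric, indeed $p$-completely isometric, homomorphism. For an $n\times n$ matrix over $l^\infty(G)\bar{\otimes}A$, the norm is the supremum over $t$ of the norms of the $n\times n$ matrices over $A$ in the $t$-th coordinate, since the operator is an $L^p$-direct sum. Because $\alpha$ is $p$-completely isometric, each coordinate norm is preserved. The same argument applies to $\theta^{-1}(f)(t)=\alpha_t(f(t))$.

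Second, I show that $\theta$ is weak* continuous. A normal functional on $\mathcal{B}(\bigoplus_t E)$ restricted to diagonal operators is a norm-convergent sum $\sum_t\omega_t$ of nuclear functionals on $E$. Each $\alpha_{t^{-1}}$ is weak* continuous, and the family is uniformly bounded. So on bounded nets one gets convergence by truncating to finitely many $t$, and Krein--Smulian then gives weak* continuity.

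Third, I check equivariance:
$$\theta((\mathrm{lt}\otimes\alpha)_s f)(t)=\alpha_{t^{-1}}\alpha_s(f(s^{-1}t))=\alpha_{(s^{-1}t)^{-1}}(f(s^{-1}t))=(\mathrm{lt}\otimes\mathrm{id})_s(\theta f)(t).$$
Lemma \ref{induce} then yields a weak* continuous isometric isomorphism $\Phi_3=\theta\rtimes\mathrm{id}$. On $C_c$ elements $F=\sum_s F(s)\delta_s$ it gives $\Phi_3(F)(s,t)=\theta(F(s))(t)=\alpha_{t^{-1}}(F(s,t))$, which is the stated formula.

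I expect the main obstacle to be the surjectivity and isometry part of Lemma \ref{induce}. That lemma only asserts the conclusion for a $p$-completely isometric isomorphism, and its proof is terse. I would handle this by applying the contractive part of Lemma \ref{induce} to both $\theta$ and $\theta^{-1}$. This gives mutually inverse contractive maps $\theta\rtimes\mathrm{id}$ and $\theta^{-1}\rtimes\mathrm{id}$; they are inverse on $C_c$ and hence everywhere by weak* continuity and density. Two mutually inverse contractions are isometric isomorphisms. For this I need the lifted map $\hat\theta$ on $\mathcal{B}(l^p(G))\bar{\otimes}(l^\infty(G)\bar{\otimes}A)$ from Lemma \ref{tensor}, which requires the complete contractivity established in the first step.
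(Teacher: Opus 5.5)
Your proposal is correct and takes essentially the paper's approach: the same coordinatewise map $\varphi_3(f)(t)=\alpha_{t^{-1}}(f(t))$ (your $\theta$), checked to be weak* continuous, $p$-completely isometric via the supremum over coordinates, and equivariant, and then fed into Lemma \ref{induce}. Your weak* continuity argument (truncating the normal functional $\sum_t\omega_t$, then Krein--Smulian) and your use of the contractive part of Lemma \ref{induce} for both $\theta$ and $\theta^{-1}$ are somewhat more careful than the paper's sequence/dominated-convergence argument and its ``one can check'' for the isometric isomorphism, but they do not change the route.
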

\begin{proof}
Let $\varphi_3:l^\infty(G)\bar{\otimes}A\rightarrow l^\infty(G)\bar{\otimes}A$ be given by $$\varphi_3(f)(t)=\alpha_{t^{-1}}(f(t))$$ for all $f\in l^\infty(G)\bar{\otimes}A$. By Lemma \ref{induce}, it suffices to prove that $\varphi_3$ is a weak* continuous, $p$-completely isometric, equivariant map for the action $\mathrm{lt}\otimes\alpha$ of $G$ on $l^\infty(G)\bar{\otimes}A$ and the action $\mathrm{lt}\otimes\mathrm{id}$ of $G$ on $l^\infty(G)\bar{\otimes}A$.
Indeed, if we let $\Phi_3=\varphi_3\rtimes\mathrm{id}$, this proves Proposition \ref{3}.
Then we will prove these by showing the following three Claims.

{\it Claim 1.} $\varphi_3$ is weak* continuous.

Assume that $f_n,f\in l^\infty(G)\bar{\otimes}A$ such that $f=\lim_{\mathrm{weak}*,n}f_n$. For each $t\in G$, $\zeta\in L^q(X,\mu)$ and $\theta\in L^p(X,\mu)$, we have
$$\begin{aligned} \la \zeta, f_n(t)\theta\ra
 &=\la \delta_t\otimes \zeta, f_n(\delta_t\otimes\theta)\ra \\
 &\rightarrow \la \delta_t\otimes \zeta, f(\delta_t\otimes\theta)\ra\\
 &=\la \zeta,f(t)\theta\ra. \end{aligned}$$
This implies that $f_n$ is uniformly bounded, so there exists a positive number $M$ such that $\|f_n\|\leq M.$
Since $\alpha$ is a weak*continuous action of $G$ on $A$, it follows that $$\la\zeta,\alpha_{t^{-1}}(f_n(t))\theta\ra\rightarrow \la\zeta,\alpha_{t^{-1}}(f(t))\theta\ra.$$

Let $x=\sum_{i=1}^\infty \xi_i\otimes\eta_i\in (l^q(G)\otimes_qL^q(X,\mu))\widehat{\otimes}(l^p(G)\otimes_pL^p(X,\mu))$.
Since $$\begin{aligned} \sum_{i=1}^\infty\sum_{t\in G}|\la\xi_i(t),\alpha_{t^{-1}}(f_n(t))\eta_i(t)\ra|
 &\leq M \sum_{i=1}^\infty\sum_{t\in G}|\la\xi_i(t),\eta_i(t)\ra|\\
 &\leq M\sum_{i=1}^\infty \sum_{t\in G}\|\xi_i(t)\|_q\|\eta_i(t)\|_p\\
 &\leq M\sum_{i=1}^\infty (\sum_{t\in G}\|\xi_i(t)\|_{q}^q)^{\frac{1}{q}}(\sum_{t\in G}\|\eta_i(t)\|_{p}^p)^{\frac{1}{p}}\\
 &=M\sum_{i=1}^\infty \|\xi_i\|_q\|\eta_i\|_p<\infty, \end{aligned}$$
it follows from  Lebesgue's Dominated Convergence Theorem that
$$\begin{aligned} \la \varphi_3(f_n),x\ra
 &=\sum_{i=1}^\infty \la \xi_i,\varphi_3(f_n)\eta_i\ra \\
 &=\sum_{i=1}^\infty \sum_{t\in G}\la\xi_i(t),(\varphi_3(f_n)\eta_i(t))\ra\\
 &=\sum_{i=1}^\infty \sum_{t\in G}\la \xi_i(t),\alpha_{t^{-1}}(f_n(t))\eta_i(t)\ra \\
 &\rightarrow \sum_{i=1}^\infty \sum_{t\in G}\la\xi_i(t),\alpha_{t^{-1}}(f(t))\eta_i(t)\ra\\
 &=\sum_{i=1}^\infty \sum_{t\in G}\la\xi_i(t),\varphi_3(f)\eta_i(t)\ra\\
 &=\sum_{i=1}^\infty \la \xi_i,\varphi_3(f)\eta_i\ra\\
 &=\la \varphi_3(f),x\ra. \end{aligned}$$
This proves Claim 1.

{\it Claim 2.} $\varphi_3$ is $p$-completely isometric.

For each $\sum_{i,j=1}^n e_{i,j}\otimes f_{i,j}\in M_n^p\otimes_p A$, since $\alpha$ is a $p$-completely isometric action of $G$ on $A$, it follows that
$$\begin{aligned}\| \mathrm{id}_{M_n^p}\otimes\varphi_3(\sum_{i,j=1}^n e_{i,j}\otimes f_{i,j})\|
 &=\|\sum_{i,j=1}^n e_{i,j}\otimes \varphi_3(f_{i,j})\|\\
 &=\sup_{t\in G}\|\sum_{i,j=1}^ne_{i,j}\otimes \alpha_{t^{-1}}(f_{i,j}(t))\|\\
 &=\sup_{t\in G}\|\sum_{ij=1}^ne_{i,j}\otimes f_{i,j}(t)\|\\
 &=\|\sum_{i,j=1}^n e_{i,j}\otimes f_{i,j}\|. \end{aligned}$$
This proves Claim 2.

{\it Claim 3.} $\varphi_3$ is equivariant map for the action $\mathrm{lt}\otimes\alpha$ of $G$ on $l^\infty(G)\bar{\otimes}A$ and the action $\mathrm{lt}\otimes\mathrm{id}$ of $G$ on $l^\infty(G)\bar{\otimes}A$.

For each $f\in l^\infty\bar{\otimes}A$,
a direct computation shows that $$\begin{aligned} \varphi_{3}((\mathrm{lt}\otimes \alpha)_{s}(f))(t))&=
\alpha^{-1}_{t}(\alpha_{s}(f(s^{-1}t)))\\
&=\alpha^{-1}_{s^{-1}t}(f(s^{-1}t))\\
&=(\mathrm{lt}\otimes \mathrm{id})_{s}(\varphi_{3}(f))(t). \end{aligned}$$
This proves Claim 3.
\end{proof}

\subsection{The weak* continuous isometry $\Phi_4$}

\begin{prop}\label{4}
There exists a weak* continuous isometrical isomorphism $\Phi_4$ from $W^*_p(G,l^\infty(G)\bar{\otimes}A,\mathrm{lt}\otimes\mathrm{id})$ onto $\mathcal{B}(l^p(G))\bar{\otimes}A.$
\end{prop}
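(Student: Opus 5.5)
The plan is to reduce the statement to the scalar case $A=\mathbb{C}$, the classical fact that $W^*_p(G,l^\infty(G),\mathrm{lt})=\mathcal{B}(l^p(G))$ for discrete $G$, and then tensor with $A$. Since the action $\mathrm{lt}\otimes\mathrm{id}$ is trivial on the $A$-factor, the regular representation lives on $l^p(G)\otimes_p l^p(G)\otimes_p E$. For $f\in l^\infty(G)$, $a\in A$ and $s\in G$ it is given by
$$\pi(f\otimes a)=\sum_{t\in G}e_{t,t}\otimes \mathrm{lt}_{t^{-1}}(f)\otimes a,\qquad \lambda_p^{E'}(s)=\lambda_p(s)\otimes I_{l^p(G)}\otimes I_E .$$
So $W^*_p(G,l^\infty(G)\bar{\otimes}A,\mathrm{lt}\otimes\mathrm{id})$ is the weak* closure of $W^*_p(G,l^\infty(G),\mathrm{lt})\otimes_{alg}A$, that is, $W^*_p(G,l^\infty(G),\mathrm{lt})\bar{\otimes}A$. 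To justify this, note that elements of $C_c$ are finite sums $\sum_s (f_s\otimes a_s)\delta_s$ with $f_s$ in the weak* closed span of elementary tensors, and that $\pi$ is weak* continuous on the first leg.

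The key step is the scalar identification. I will build the intertwiner $U$ on $l^p(G\times G)$ given by $(U\xi)(t,r)=\xi(t,tr)$, which is an invertible isometry since it merely permutes the coordinates. Conjugating by $U$ should give
$$U\pi(f)U^{-1}=I\otimes M_f,\qquad U(\lambda_p(s)\otimes I)U^{-1}=\lambda_p(s)\otimes\lambda_p(s),$$
possibly after exchanging the roles of the legs; I will check the exact formula by a direct computation on $\delta_t\otimes\delta_r$. The resulting algebra, generated by $M_f$ for $f\in l^\infty(G)$ and by $\lambda_p(s)\otimes\lambda_p(s)$, acts on $l^p(G)\otimes_p l^p(G)$.

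Next I need to show that, after a second permutation isometry $V(\xi)(t,r)=\xi(r^{-1}t,r)$, the weak* closure is exactly $I_{l^p(G)}\otimes\mathcal{B}(l^p(G))$. The route is through matrix units: $M_{\delta_r}\lambda_p(rs^{-1})$ is the matrix unit $e_{r,s}$, and the matrix units together with the weak* closure generate $\mathcal{B}(l^p(G))$, because finite-rank matrices are weak* dense in $\mathcal{B}(l^p(G))$ for $1<p<\infty$. Compressing with $E_r$-type maps then yields $\Phi_4^{\mathbb{C}}:W^*_p(G,l^\infty(G),\mathrm{lt})\to\mathcal{B}(l^p(G))$. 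This map is spatial, hence a weak* homeomorphic isometric isomorphism, and also $p$-completely isometric, being implemented by an isometry tensored with the identity.

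Finally, tensoring with $A$ via Lemma \ref{tensor}, applied to $\Phi_4^{\mathbb{C}}$ and to its inverse (both $p$-completely isometric and weak* continuous), gives $\Phi_4$ as a weak* continuous isometric isomorphism onto $\mathcal{B}(l^p(G))\bar{\otimes}A$. The main obstacle is the scalar identification, specifically surjectivity: showing that the weak* closure of the span of $M_f\lambda_p(s)$ is all of $\mathcal{B}(l^p(G))$ rather than just the finite-propagation operators. My approach is to approximate an arbitrary $T$ by the finite truncations $P_FTP_F=\sum_{r,s\in F}T_{r,s}e_{r,s}$ over finite $F\subset G$; each truncation is a finite combination of $M_{\delta_r}\lambda_p(rs^{-1})$, and the truncations converge to $T$ weak* because they are bounded by $\|T\|$ and converge in the weak operator topology.
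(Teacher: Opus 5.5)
Your proof is correct in substance and uses the same mechanism as the paper. You conjugate the regular representation by a coordinate-permutation isometry that turns $\pi$ into a plain multiplication operator, and then identify the weak* closure of what $M_f$ and $\lambda_p(s)$ generate with $\mathcal{B}(l^p(G))$.

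The paper does this in a single step, without separating off $A$. The isometry $V(\delta_s\otimes\delta_t\otimes\xi)=\delta_{st}\otimes\delta_t\otimes\xi$ gives $V\pi(f\otimes a)V^{-1}=M_f\otimes I_{l^p(G)}\otimes a$ and commutes with $\lambda_p(r)\otimes I_{l^p(G)}\otimes I_E$. So both generators sit on the first leg, and neither the scalar reduction nor Lemma \ref{tensor} is needed.

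Your two-stage detour through $I\otimes M_f$ and $\lambda_p(s)\otimes\lambda_p(s)$ also works, but both of your explicit permutations are inverted. With $(U\xi)(t,r)=\xi(t,tr)$, $U\pi(f)U^{-1}$ is multiplication by $(t,r)\mapsto f(t^2r)$. Your $V$ sends $\lambda_p(s)\otimes\lambda_p(s)$ to $\lambda_p(s^2)\otimes\lambda_p(s)$. The right choices are $(U\xi)(t,r)=\xi(t,t^{-1}r)$ and $(V\xi)(t,r)=\xi(rt,r)$.

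In fact, your $V$ as written, applied directly to the original pair $(\pi,\lambda_p\otimes I)$, is exactly the paper's intertwiner (since $G$ is abelian), so $U$ can be dropped entirely. That same intertwining formula is also the cleanest way to see the weak* continuity of $\pi$ on $l^\infty(G)\bar{\otimes}A$, which your reduction to $A=\mathbb{C}$ relies on.

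At the step you flagged as the main obstacle, your proposal does more than the paper. The paper simply asserts that the conjugated algebra is $\mathcal{B}(l^p(G))\bar{\otimes}\mathbb{C}I_{l^p(G)}\bar{\otimes}A$. Your identity $M_{\delta_r}\lambda_p(rs^{-1})=e_{r,s}$, together with the bounded weak-operator convergence $P_FTP_F\to T$, is exactly the argument needed to justify that assertion. The convergence uses $1<p<\infty$, so that the coordinate projections converge strongly to the identity on both $l^p(G)$ and $l^q(G)$.
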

\begin{proof}
We define an invertible isometry $V:l^p(G)\otimes_p l^p(G)\otimes_p E\rightarrow l^p(G)\otimes_p l^p(G)\otimes_p E$ as $\delta_s\otimes\delta_t\otimes\xi\mapsto \delta_{st}\otimes \delta_{t}\otimes \xi$, where $\delta_s$ is the canonical basis of $l^p(G)$ for all $s\in G$ and $\xi\in E$.
We denote by $I_{l^p(G)}$ and $I_E$ the identity operator on $l^p(G)$ and $E$, respectively.

Let $\pi_0:l^\infty(G)\bar{\otimes}A\rightarrow \mathcal{B}(l^p(G)\otimes_pE)$ be a representation given by $\pi(f\otimes a)=M_f\otimes a$ for all $f\in l^\infty(G)$ and $a\in A$, where $M_f$ is the multiplication operator on $l^p(G)$. We denoted by $(\pi,\lambda_p^{l^p(G)\otimes_p E})$ be its associated regular representation.
A direct computation shows that $$\begin{aligned} V\pi(f\otimes a)(\delta_s\otimes\delta_t\otimes\xi)&=
V(\delta_s\otimes\pi_0((\mathrm{lt}\otimes\mathrm{id})_{s^{-1}}(f\otimes a))(\delta_t\otimes\xi))\\
&= V(\delta_s\otimes M_{(\mathrm{lt})_{s^{-1}}(f)}\delta_t\otimes a\xi))\\
&=V(\delta_s\otimes f(st)\delta_t\otimes a\xi)\\
&=f(st)\delta_{st}\otimes \delta_{t}\otimes a\xi\\
&=(M_f\otimes I_{l^p(G)}\otimes a)(\delta_{st}\otimes \delta_{t}\otimes \xi)\\
&=(M_f\otimes I_{l^p(G)}\otimes a)(V(\delta_s\otimes \delta_t\otimes\xi)). \end{aligned}$$
It follows that \begin{align}\label{pi}
V\pi(f\otimes a)V^{-1}= M_f\otimes I_{l^p(G)}\otimes a.
\end{align}

It is easy to check that $\lambda_p^{l^p(G,E)}=\lambda_p\otimes I_{l^p(G)}\otimes I_E$, where $\lambda_p:G\rightarrow \mathcal{B}(l^p(G))$ is the left-regular representation of $G$ on $l^p(G)$.
For each $r\in G$, one can check that  $$\begin{aligned} V(\lambda_p(r)\otimes I_{l^p(G)} \otimes I_E)V^{-1}(\delta_s\otimes\delta_t\otimes\xi)&=
 V(\lambda_p(r)\otimes I_{l^p(G)} \otimes I_E)(\delta_{st^{-1}}\otimes\delta_{t}\otimes\xi)\\
&=V(\delta_{rst^{-1}}\otimes\delta_t\otimes \xi)\\
&=\delta_{rs}\otimes \delta_{t}\otimes\xi\\
&=(\lambda_p(r)\otimes I_{l^p(G)}\otimes I_E)(\delta_s\otimes\delta_t\otimes\xi). \end{aligned}$$
It follows that \begin{align}\label{lambda}
V( \lambda_p(r)\otimes I_{l^p(G)}\otimes I_E)V^{-1}=\lambda_p(r)\otimes I_{l^p(G)}\otimes I_E.\end{align}

Since $f\in l^\infty(G)$ and $V$ is an isometry, by (\ref{pi}) and (\ref{lambda}), we have the following weak* continuous isometric isomorphism $$V(W^*_p(G,l^\infty(G)\bar{\otimes}A,\mathrm{lt}\otimes\mathrm{id}))V^{-1}\cong \mathcal{B}(l^p(G))\bar{\otimes} \mathbb{C}I_{l^p(G)}\bar{\otimes} A.$$
Obviously, $\mathcal{B}(l^p(G))\bar{\otimes} \mathbb{C}I_{l^p(G)}\bar{\otimes} A$ is weak* continuous isometrically isomorphic to $\mathcal{B}(l^p(G))\bar{\otimes}A$. We denote by $\Phi_4$ the weak* continuous isometric isomorphism from $W^*_p(G,l^\infty(G)\bar{\otimes}A,\mathrm{lt}\otimes\mathrm{id})$ onto $\mathcal{B}(l^p(G))\bar{\otimes}A$.
\end{proof}

Now we shall prove that $\Phi=\Phi_4\circ\Phi_3\circ\Phi_2\circ\Phi_1$ is equivariant.

For the reader's convenience, we recall the double dual action $\hat{\hat{\alpha}}$ of $G$ on $W^*_p(\hat{G},W^*_p(G,A,\alpha),\hat{\alpha})$ and the action $(\mathrm{rt}\otimes \alpha)\otimes \mathrm{id}$ of $G$ on $W^*_p(G,l^\infty(G)\bar{\otimes}A,\mathrm{lt}\otimes \mathrm{id})$. The double dual action $\hat{\hat{\alpha}}$ of $G$ on $C_{c}(\hat{G}\times G,A)\subset W^*_p(\hat{G},W^*_p(G,A,\alpha),\hat{\alpha})$ is given by $$\hat{\hat{\alpha}}_{t}(F)(\gamma, s):=\overline{\gamma(t)} F(\gamma, s)$$ for all $t\in G$. Let $\mathrm{rt}$ denote the right-translation of $G$ on $l^\infty(G)$, that is,
$(\mathrm{rt})_{t}f(s):=f(st)$ for all $f\in l^\infty(G)$. Then we get a weak* closed $L^{p}$-operator algebra dynamical system
$$\mathrm{rt}\otimes \alpha:G\rightarrow \mathrm{Aut}(l^\infty(G)\bar{\otimes}A),$$ where $$(\mathrm{rt}\otimes \alpha)_{t}f(s):=\alpha_{t}(f(st))$$ for all $f\in l^\infty(G)\bar{\otimes}A$ and $t\in G$.
Thus for each $F\in C_{c}(G\times G,A)\subset W^*_p(G,l^\infty(G)\bar{\otimes}A,\mathrm{lt}\otimes \mathrm{id})$ and $r\in G$, we let
$$((\mathrm{rt}\otimes \alpha)\otimes \mathrm{id})_{r}F(s,t):=\alpha_{r}(F(s,tr)).$$

Next let us introduce the action of $G$ on $\mathcal{B}(l^{p}(G))\otimes_{p}A$.
Let $\rho_p:G\rightarrow \mathcal{B}(l^{p}(G))$ be the right-regular representation and form the dynamical system $\mathrm{Ad}\rho_p: G\rightarrow \mathrm{Aut}(\mathcal{B}(l^{p}(G)))$, where $(\mathrm{Ad}\rho)_{s}T:=\rho_p(s)T\rho_p(s^{-1})$. Then we have the tensor product dynamical system $\mathrm{Ad}\rho)p\otimes\alpha:G\rightarrow \mathrm{Aut}(\mathcal{B}(l^{p}(G)))\otimes_{p}A$, where $(\mathrm{Ad}\rho_p\otimes\alpha)_s (T\otimes a)=(\mathrm{Ad}\rho_p)_s(T)\otimes \alpha_s(a)$.

\begin{prop}\label{equi}
Let $\Phi=\Phi_4\circ\Phi_3\circ\Phi_2\circ\Phi_1:W^*_p(\hat{G},W^*_p(G,A,\alpha),\hat{\alpha})\rightarrow \mathcal{B}(l^p(G))\bar{\otimes}A$. Then $\Phi$ is equivariant for the double dual of $\hat{\hat{\alpha}}$ of $G$ on $W^*_p(\hat{G},W^*_p(G,A,\alpha),\hat{\alpha})$ and the action $\mathrm{Ad}\rho_p\otimes\alpha$ of $G$ on $\mathcal{B}(l^p(G)\bar{\otimes}A$.
\end{prop}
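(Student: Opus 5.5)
We plan to prove equivariance one map at a time: for each $\Phi_i$ we identify the action transported to its target, and check the intertwining relation on the weak* dense subalgebra of compactly supported functions. Since all maps and actions are weak* continuous (Propositions \ref{1}, \ref{22}, \ref{3}, \ref{4} and Lemma \ref{induce}), equivariance then extends to the whole algebra.

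\emph{Step 1 ($\Phi_1$).} For $F\in C_c(\hat{G}\times G,A)$ we have $\Phi_1(F)(t,\tau)=\tau(t)F(\tau,t)$. Multiplying $F(\gamma,s)$ by $\overline{\gamma(r)}$ corresponds to multiplying $\Phi_1(F)(s,\gamma)$ by $\overline{\gamma(r)}$. On $W^*_p(G,W^*_p(\hat{G},A,\iota),\beta)$ this is the action $\mathrm{id}\otimes\hat{\iota}_r$, acting fibrewise on the coefficient algebra $W^*_p(\hat{G},A,\iota)$.

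\emph{Step 2 ($\Phi_2$).} By the Fourier transform formula in Lemma \ref{2}(v), $\varphi_2(\hat\iota_r g)(t)=g^\wedge(tr)$, so $\varphi_2\circ\hat\iota_r=\mathrm{rt}_r\circ\varphi_2$. The corresponding action on $W^*_p(G,l^\infty(G)\bar{\otimes}A,\mathrm{lt}\otimes\alpha)$ is fibrewise right translation, $F(s,t)\mapsto F(s,tr)$. Right translation commutes with $\mathrm{lt}\otimes\alpha$ because $G$ is abelian, so this is a legitimate action on the crossed product.

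\emph{Step 3 ($\Phi_3$).} We have $\Phi_3(F)(s,t)=\alpha_{t^{-1}}(F(s,t))$. Conjugating right translation by $\Phi_3$ gives
\[
\alpha_{t^{-1}}\bigl(F(s,tr)\bigr)=\alpha_{r}\bigl(\alpha_{(tr)^{-1}}(F(s,tr))\bigr),
\]
which is exactly $((\mathrm{rt}\otimes\alpha)\otimes\mathrm{id})_r$ as defined before the proposition. This explains why that action was introduced.

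\emph{Step 4 ($\Phi_4$).} This is the main computation. Using the unitary $V:\delta_s\otimes\delta_t\otimes\xi\mapsto\delta_{st}\otimes\delta_t\otimes\xi$ from Proposition \ref{4}, we compute the images of the generators.
\begin{itemize}
\item $\pi(f\otimes a)\mapsto M_f\otimes a$.
\item $\lambda(r)\mapsto\lambda_p(r)\otimes I$.
\end{itemize}
We then check the action on these generators.
\begin{itemize}
\item On $M_f\otimes a$, the action $(\mathrm{rt}\otimes\alpha)_r$ gives $M_{\mathrm{rt}_r f}\otimes\alpha_r(a)$. Since $\rho_p(r)M_f\rho_p(r)^{-1}=M_{\mathrm{rt}_rf}$, this equals $(\mathrm{Ad}\rho_p\otimes\alpha)_r(M_f\otimes a)$.
\item On $\lambda_p(s)\otimes I$, the action fixes $\lambda(s)$, and $\rho_p$ commutes with $\lambda_p$ because $G$ is abelian. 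So both sides agree here as well.
\end{itemize}
Since $\mathrm{Ad}\rho_p\otimes\alpha$ is a weak* continuous automorphism (Lemma \ref{tensor} together with the $p$-complete isometry of $\alpha$), agreement on generators gives agreement everywhere.

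\emph{Expected obstacle.} The main difficulty is Step 4. There we must keep track of the index convention of $V$ and make sure the homomorphism identity $f(st)$ matches $\mathrm{rt}$ rather than $\mathrm{lt}$; if it does not, we will replace $\rho_p$ by its inverse in the bookkeeping. A second point to verify carefully is that the intermediate actions in Steps 1–3 are weak* continuous, so that checking on dense subalgebras suffices.
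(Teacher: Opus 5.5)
Your proposal is correct and is essentially the paper's argument. The paper also routes everything through the intermediate action $(\mathrm{rt}\otimes\alpha)\otimes\mathrm{id}$. It proves in one direct computation on $C_c(\hat{G}\times G,A)$ that $\Phi_3\circ\Phi_2\circ\Phi_1$ intertwines $\hat{\hat{\alpha}}$ with this action (your Steps 1--3 split that computation into three pieces). It then checks $\Phi_4$ against $\mathrm{Ad}\rho_p\otimes\alpha$ via the formula $\Phi_4(D)h(t)=\sum_{s}D(s,t)h(s^{-1}t)$ rather than on generators, and extends by weak* density and continuity. One minor correction: left and right translations (and likewise $\lambda_p$ and $\rho_p$) commute for every group, so abelianness is not what is used at those two points.
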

\begin{proof}
Note that $C_c(\hat{G}\times G,A)$ is weak* dense in $W^*_p(\hat{G},W^*_p(G,A,\alpha),\hat{\alpha})$ and $\Phi$, $\hat{\hat{\alpha}}$, $\mathrm{Ad}\rho_p\otimes\alpha$ are weak* continuous, it suffice to prove that $$(\mathrm{Ad}\rho_p\otimes\alpha)_r\circ\Phi(F)=\Phi\circ\hat{\hat{\alpha}}(F)$$ for all $F\in C_c(\hat{G}\times G,A)$. Then we will prove this theorem by showing the following two Claims.

{\it Claim 1.} $(\Phi_{3}\circ \Phi_{2}\circ \Phi_{1})\circ \hat{\hat{\alpha}}_{r}(F)=((\mathrm{rt}\otimes\alpha)\otimes \mathrm{id})_{r}\circ (\Phi_{3}\circ \Phi_{2}\circ \Phi_{1})(F)$.

A direct computation shows that
$$\begin{aligned} \Phi_{3}\circ \Phi_{2}\circ \Phi_{1}(F)(s,t)&= \alpha_{t}^{-1}(\Phi_{2}\circ \Phi_{1}(F)(s,t))\\
&=\alpha_{t}^{-1}(\int_{\hat{G}}\Phi_{1}(F)(s,\gamma)\overline{\gamma(t)}d\mu(\gamma))\\
&=\int_{\hat{G}}\alpha_{t}^{-1}(F(\gamma,s))\overline{\gamma(s^{-1}t)}d\mu(\gamma). \end{aligned}$$
Then, for each $r\in G$, we have
$$\begin{aligned} \Phi_{3}\circ \Phi_{2}\circ \Phi_{1}(\hat{\hat{\alpha}}_{r}(F))(s,t)&= \int_{\hat{G}}\alpha_{t}^{-1}(\hat{\hat{\alpha}}_{r}(F)(\gamma,s))\overline{\gamma(s^{-1}t)}d\mu(\gamma)\\
&=\int_{\hat{G}}\alpha_{t}^{-1}(F(\gamma,s))\overline{\gamma(s^{-1}tr)}d\mu(\gamma)\\
&=\alpha_{r}(\int_{\hat{G}} \alpha_{tr}^{-1}(F(\gamma,s)\overline{\gamma(s^{-1}tr)}d\mu(\gamma))\\
&=\alpha_{r}(\Phi_{3}\circ \Phi_{2}\circ \Phi_{1}(F)(s,tr))\\
&=((\mathrm{rt}\otimes\alpha)\otimes \mathrm{id})_{r}\circ (\Phi_{3}\circ \Phi_{2}\circ \Phi_{1})(F)(s,t). \end{aligned}$$
This proves Claim 1.

{\bf Claim 2.} $(\mathrm{Ad}\rho_p\otimes\alpha)_{r}\circ\Phi_{4}(D)=\Phi_{4}\circ( (\mathrm{rt}\otimes\alpha)\otimes \mathrm{id})_{r}(D)$ for all $D\in C_c(G,l^\infty(G)\bar{\otimes}A,\mathrm{lt}\otimes \mathrm{id})\subset W^*_p(G,l^\infty(G)\bar{\otimes}A,\mathrm{lt}\otimes \mathrm{id}).$

For each $h\in C_{c}(G,E)\subseteq l^{p}(G,E)$, we have $\Phi_{4}(D)h(t)=\sum_{s\in G}D(s,t)h(s^{-1}t)$. For $r\in G$, $T\in \mathcal{B}(l^p(G))$ and $a\in A$, we have 
$$\begin{aligned} (\mathrm{Ad}\rho_p\otimes\alpha)_{r}(T\otimes a)&=
(\mathrm{Ad}\rho_p)_r(T)\otimes \alpha_r(a)\\
&=(\rho_p(r)\otimes I_E)((I_{l^p{G)}}\otimes\alpha_r)(T\otimes a))(\rho_p({r^{-1}})\otimes I_E).
 \end{aligned}$$
Hence
$$\begin{aligned} (\mathrm{Ad}\rho_p\otimes\alpha)_{r}(\Phi_{4}(D))h(t)&=
(\rho_p(r)\otimes I_E)((I_{l^p{G)}}\otimes\alpha_r)(\Phi_{4}(D)))(\rho_p({r^{-1}})\otimes I_E)h(t)\\
&=((I_{l^p{G)}}\otimes\alpha_r)(\Phi_{4}(D)))(\rho_p({r^{-1}})\otimes I_E)h(tr)\\
&=\sum_{s\in G}\alpha_r(D(s,tr))(\rho_p({r^{-1}})\otimes I_E)h(s^{-1}tr)\\
&=\sum_{s\in G}\alpha_r(D(s,tr))h(s^{-1}t)\\
&=\sum_{s\in G}((\mathrm{rt}\otimes\alpha)\otimes \mathrm{id})_{r}(D)(s,t)h(s^{-1}t)\\
&=\Phi_{4}( ((\mathrm{rt}\otimes\alpha)\otimes \mathrm{id})_{r}(D))h(t).
 \end{aligned}$$
This proves Claim 2.

By Claim 1 and Claim 2, one can check that $$(\mathrm{Ad}\rho_p\otimes\alpha)_r\circ\Phi(F)=\Phi\circ\hat{\hat{\alpha}}(F)$$ for all $F\in C_c(\hat{G}\times G,A)$.
\end{proof}

\section{Proof of Theorem \ref{Thm2} and \ref{Thm3}}

\subsection{Proof of Theorem \ref{Thm2}}
Note that isometric isomorphisms between $L^p$-operator algebras preserve $C^*$-cores, then Theorem \ref{Thm2} follows from proposition.
\begin{prop}\label{c1}
Let $p\in(1,\infty)\setminus\{2\}$, and let $G$ be a countable discrete group.
\begin{enumerate}
\item[(i)] $\mathrm{core}(W^*_p(G,W^*_p(\hat{G},A,\iota),\beta))=\mathrm{core}(A)$;
\item[(ii)]  $\mathrm{core}(\mathcal{B}(l^p(G))\bar{\otimes}(A))=l^\infty(G)\bar{\otimes}\mathrm{core}(A)$.
\end{enumerate}
\end{prop}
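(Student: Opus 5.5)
For (i), I will apply Proposition \ref{core1} twice. Since $G$ is countable discrete and $\beta$ is a weak* continuous isometric action of $G$ on the weak* closed $L^p$-operator algebra $W^*_p(\hat{G},A,\iota)$, Proposition \ref{core1} gives
$$\mathrm{core}\big(W^*_p(G,W^*_p(\hat{G},A,\iota),\beta)\big)=\mathrm{core}\big(W^*_p(\hat{G},A,\iota)\big).$$
Here $W^*_p(\hat{G},A,\iota)=PM_p(\hat{G})\bar{\otimes}A$.

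The second application of Proposition \ref{core1} needs a discrete group, but $\hat{G}$ is compact. So I will instead argue directly, in the same spirit as Lemma \ref{core2}. A hermitian element $x$ of $PM_p(\hat{G})\bar{\otimes}A\subset\mathcal{B}(L^p(\hat{G})\otimes_p E)$ is hermitian in $\mathcal{B}(L^p(\hat{G}\times X))$. By \cite[Example 2.11]{Choi and} it is therefore a real multiplication operator $M_\eta$ with $\eta\in L^\infty(\hat{G}\times X)$.

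Every element of $PM_p(\hat{G})\bar{\otimes}A$ commutes with $\rho_p(\gamma)\otimes I_E$, since $PM_p(\hat{G})=CV_p(\hat{G})$ and weak* limits preserve commutation. A multiplication operator commuting with all right translations in the $\hat{G}$ variable must satisfy $\eta(\tau\gamma,x)=\eta(\tau,x)$. Hence $\eta$ depends only on $x$, so $x=I\otimes M_{\eta_0}$ with $M_{\eta_0}$ hermitian.

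To see that $M_{\eta_0}\in A$, I will apply a weak* continuous slice map. One option is $\omega\otimes\mathrm{id}$ with $\omega$ a normal state on $PM_p(\hat{G})$ sending $1$ to $1$, for instance the expectation-type map obtained from the vector $\delta$-approximations. Since $\hat{G}$ is compact, a simpler option is to pair with $\mathbf{1}\in L^q(\hat{G})$ and $\mathbf{1}\in L^p(\hat{G})$ of the normalized Haar measure, as in Claim 1 of Proposition \ref{core1}. Either way this shows $M_{\eta_0}\in A_h$, hence in $\mathrm{core}(A)$. The reverse inclusion $\mathrm{core}(A)\subset\mathrm{core}(W^*_p(\cdots))$ is immediate, because hermitian elements stay hermitian in unital superalgebras with the same unit. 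This proves (i).

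For (ii), I again reduce to multiplication operators. Any hermitian $T\in\mathcal{B}(l^p(G))\bar{\otimes}A\subset\mathcal{B}(l^p(G)\otimes_pE)$ is, by \cite[Example 2.11]{Choi and}, a real multiplication operator $M_\eta$ on $L^p(G\times X)$. Equivalently, $T=\sum_t e_{t,t}\otimes M_{\eta(t,\cdot)}$ in the weak* sense.

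Compressing by $e_{t,t}\otimes I$, as with $W_tTV_t$ in Lemma \ref{inj}, gives $M_{\eta(t,\cdot)}\in A$. Each of these is hermitian, so $M_{\eta(t,\cdot)}\in\mathrm{core}(A)$. Thus $T\in l^\infty(G)\bar{\otimes}\mathrm{core}(A)$, where weak* convergence of the finite partial sums follows from uniform boundedness.

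Conversely, $l^\infty(G)\bar{\otimes}\mathrm{core}(A)$ consists of multiplication operators, since $\mathrm{core}(A)$ is commutative and consists of multiplication operators. Its real-valued elements are hermitian, so it lies in the core.

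The main obstacle is justifying that the hermitian elements of these weak* closed tensor products are exactly the multiplication operators whose slices lie in $A$. This needs a weak* continuous slice or compression map landing in $A$. For (i) the compact dual group makes Proposition \ref{core1} inapplicable, so I will rely on the convoluter-commutation argument above.
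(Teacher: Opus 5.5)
Your proposal is correct and follows the paper's route. In (i) you reduce to $\mathrm{core}(PM_p(\hat G)\bar\otimes A)$ via Proposition \ref{core1}, identify hermitians with real multiplication operators, use right-translation invariance (the content of Lemma \ref{core2}) to remove the $\hat G$-dependence, and slice to land in $A_h$; in (ii) you read off the diagonal entries, which lie in $A_h$. Your tensor-level commutation with $\rho_p(\gamma)\otimes I_E$ and the weak* continuous slice against $\mathbf{1}\in L^q(\hat G)$, $\mathbf{1}\in L^p(\hat G)$ actually justify what the paper does with pointwise slices $f(\cdot,x)\in PM_p(\hat G)$ and $f(\gamma,\cdot)\in A_h$, which is delicate because points may be null sets. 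For the reverse inclusion in (ii), state explicitly that $l^\infty(G)\bar\otimes\mathrm{core}(A)$ is closed under taking real parts, e.g.\ via the compressions of Lemma \ref{inj}.
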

\begin{proof}
(i) By Proposition \ref{core1}, we have $\mathrm{core}(W^*_p(G,W^*_p(\hat{G},A,\iota),\beta))=\mathrm{core}(W^*_p(\hat{G},A,\iota))$.
Since $\iota$ is a trivial action of $\widehat{G}$ on $A$, it follows that $W^*_p(\hat{G},A,\iota)=PM_p(\widehat{G})\bar{\otimes}A$. Recall that $A\subset \mathcal{B}(L^p(X,\mu))$ is a weak* closed subalgebra, and $\nu$ is a normalized haar measure on $\widehat{G}$.
For each $f\in (PM_p(\hat{G}\bar{\otimes}A))_h$, then $f\in \mathcal{B}(L^p(\widehat{G}\times X,\nu\times \mu))_h$ (see \cite[Lemma 2.10]{Blecher and Phillips}).
Hence $f$ is a real-valued function in $L^\infty(\widehat{G}\times X,\nu\times \mu)$ (see \cite[Proposition 2.7]{Choi and}) and $f\in PM_p(\hat{G}\bar{\otimes}A)$. Hence $f(.,x)$ is a real-valued function in $L^\infty(\hat{G})$ and $f(.,x)\in PM_p(\widehat{G})$ for all $x\in X$.
Therefore,  $f(.,x)$ is a real-valued function in $L^\infty(\widehat{G})$ and $f(.,x)\in PM_p(\widehat{G})$.
It follows from Lemma \ref{core2} that $f(.,x)$ is a real-valued constant function in $L^\infty(\widehat{G})$, and we assume that $f(.,x)=c_x$, where $c_x(\gamma)=c_x\in \mathbb{R}$ for all $\gamma\in \widehat{G}$.

Similarly, for each $\gamma\in \widehat{G}$, we have $f(\gamma,.)\in A_h$. Then we have $$f(\gamma_1,x)=f(\gamma_2,x)$$ for all $\gamma_1,\gamma_2\in \widehat{G}$ and $x\in X$. Hence there is a bijective correspondence between $(PM_p(\hat{G})\bar{\otimes}A)_h$ and $A_h$ which is given by $f\mapsto f(\gamma,.)$. This shows that $\mathrm{core}(PM_p(\widehat{G})\bar{\otimes}A)=\mathrm{core}(A)$. This proves (i).

(ii) For each $f\in (\mathcal{B}(l^p(G))\bar{\otimes}A)_h$, then $f$ is a real-valued function in $L^p(G\times X,\nu\times\mu)$ and $f\in \mathcal{B}(l^p(G))\bar{\otimes}A$, where $\nu$ is the counting measure on $G$. Then $f(s)\in A_h$ for all $s\in G$. Hence $f=\sum_{s\in G}\delta_s\otimes f(s)\in l^\infty(G)_h\bar{\otimes} A_h$. Therefore,  $\mathrm{core}(\mathcal{B}(l^p(G))\bar{\otimes}(A))=l^\infty(G)\bar{\otimes}\mathrm{core}(A)$.
\end{proof}

\subsection{Proof Theorem \ref{Thm3}}
Note that isomorphisms between $L^p$-operator algebras preserve $p$-incompressibility, then Theorem \ref{Thm3} follows from the following proposition.
\begin{prop}
Let $p\in (1,\infty)$ and let $A=M_n^p$.
\begin{enumerate}
\item[(i)] $W^*_p(G,W^*_p(\hat{G},A,\iota),\beta))$ is $p$-incompressible if and only if $p=2$;
\item[(ii)]  $\mathcal{B}(l^p(G))\bar{\otimes} A$ is $p$-incompressible.
\end{enumerate}
\end{prop}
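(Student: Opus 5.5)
The plan is to treat the two parts separately. Throughout (i) I assume, as the intended reading of the statement, that $G$ is infinite when $p\neq 2$. For finite $G$ the algebra $PM_p(\hat G)\bar\otimes M_n^p$ is finite dimensional, so every injective homomorphism out of it is automatically bounded below.

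\emph{Part (i), case $p=2$.} Here $W^*_2(\hat G,M_n^2,\iota)=L(\hat G)\bar\otimes M_n$ is a von Neumann algebra. Since $\beta$ is implemented by the unitaries $W_s\otimes(\text{unitary implementing }\alpha_s)$, it is a $*$-action. Hence $W^*_2(G,W^*_2(\hat G,A,\iota),\beta)$ is a von Neumann crossed product, in particular a $C^*$-algebra. It is therefore $p$-incompressible by Remark \ref{c}(ii).

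\emph{Part (i), case $p\neq 2$ and $G$ infinite.} I will use the map $\Psi:=\Phi_4\circ\Phi_3\circ\Phi_2$. By Propositions \ref{22}, \ref{3} and \ref{4}, $\Psi$ is a bounded, injective, weak* continuous homomorphism
$$\Psi:W^*_p(G,W^*_p(\hat G,A,\iota),\beta)\rightarrow \mathcal{B}(l^p(G))\bar\otimes M_n^p\subset \mathcal{B}(l^p(G)\otimes_p l^p_n).$$
It has weak* dense range and, by Proposition \ref{22}(ii), is not surjective. Suppose, for contradiction, that $\Psi$ is bounded below, say $\|\Psi(x)\|\ge c\|x\|$. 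The domain is a weak* closed subspace of $\mathcal{B}(l^p(G)\otimes_p l^p(\hat G)\otimes_p l^p_n)$, hence a dual space whose unit ball is weak* compact. By weak* continuity, $\Psi(\mathrm{Ball})$ is weak* compact. The lower bound gives $\mathrm{ran}\Psi\cap r\,\mathrm{Ball}\subset \Psi(c^{-1}r\,\mathrm{Ball})$, so every such ball-intersection is weak* closed. By Krein--Smulian, $\mathrm{ran}\Psi$ is then weak* closed, and being weak* dense it is everything. This contradicts non-surjectivity. Hence $\Psi$ is an injective bounded homomorphism into an $L^p$-operator algebra that is not bounded below, so the algebra is not $p$-incompressible. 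The same argument could be phrased through Lemma \ref{below}, because the range is not norm closed.

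\emph{Part (ii).} First identify $\mathcal{B}(l^p(G))\bar\otimes M_n^p$ spatially with $\mathcal{B}(l^p(Z))$, where $Z=G\times\{1,\dots,n\}$. So it suffices to show that $\mathcal{B}(l^p(Z))$ is $p$-incompressible. Let $\varphi:\mathcal{B}(l^p(Z))\to\mathcal{B}(F)$ be a bounded injective homomorphism and fix $z_0\in Z$. Put $P=\varphi(e_{z_0,z_0})$ and $F_0=PF$. Define
$$W:\ \bigl(l^p(Z)\otimes_p F_0\bigr)_{\mathrm{fin}}\to F,\qquad \delta_z\otimes\xi\mapsto\varphi(e_{z,z_0})\xi .$$
For finitely supported vectors one checks $\varphi(T)W=W(T\otimes I)$; for finite-rank $T$ this is immediate from the matrix-unit relations. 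The key estimates are that $W$ is bounded and bounded below, with constants depending only on $\|\varphi\|$.

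For the upper bound, write $\sum_z\varphi(e_{z,z_0})\xi_z=\varphi(T)\eta$ with a suitable $T$ built from the coordinates of $(\xi_z)$. Here the restriction of $\varphi$ to the diagonal $l^\infty(Z)$ is used: $\varphi(1_S)$ is uniformly bounded over all $S\subset Z$, giving unconditional-type control. The lower bound follows the same way, using the left inverses $\varphi(e_{z_0,z})$. Given both bounds, for every $T$ one gets
$$\|\varphi(T)\|\ge \|W\|^{-1}\,\|W^{-1}|_{\mathrm{ran}}\|^{-1}\,\|T\|,$$
first for finite compressions $e_S T e_S$ and then by taking the supremum over finite $S$. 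Since $\|T\|=\sup_S\|e_STe_S\|$ in $\mathcal{B}(l^p)$, this shows $\varphi$ is bounded below.

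I expect this last step to be the main obstacle. The difficulty is obtaining dimension-free bounds on $W$ without assuming any continuity of $\varphi$ beyond norm boundedness. If the direct estimate fails, the fallback is to cite the known incompressibility of $\mathcal{B}(l^p)$ and its amplifications from Johnson--Phillips \cite{incompressible}, and to reduce to that result via the spatial identification above.
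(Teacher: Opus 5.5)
Your proof is correct. It follows the paper's route. For $p=2$ the algebra is a $C^*$-algebra. For $p\neq 2$ you use the injective, weak* continuous, non-surjective homomorphism $\Phi_4\circ\Phi_3\circ\Phi_2$ into $\mathcal{B}(l^p(G))\bar{\otimes}M_n^p$. Part (ii) reduces to the incompressibility of $\mathcal{B}(l^p(G\times\{1,\dots,n\}))$ from Johnson--Phillips.

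Two of your additions repair genuine defects in the paper's argument:
\begin{itemize}
\item The restriction to infinite $G$ is necessary. For finite $G$ the algebra is finite dimensional, hence $p$-incompressible for every $p$, so (i) as stated is false; the paper's proof tacitly needs $G$ infinite.
\item Your Krein--Smulian argument supplies the step ``not surjective $\Rightarrow$ not bounded below''. The paper only asserts this step by pointing to Theorem~\ref{Thm1}, which gives non-surjectivity and nothing more.
\end{itemize}
Your side remark that one could instead go through Lemma~\ref{below} is only valid after the Krein--Smulian step. Weak* density plus non-surjectivity does not by itself exclude a norm-closed range; $c_0\subset l^\infty$ is an example. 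So non-closedness of the range is a consequence of your argument, not an independent route.

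The direct proof you sketch for (ii) does not work as written, so (ii) rests on the citation, exactly as in the paper. The coordinates $\xi_z$ are vectors in $F_0=\varphi(e_{z_0,z_0})F$, which is in general infinite dimensional. A scalar operator $T\in\mathcal{B}(l^p(Z))$ applied to a single vector $\eta$ can only produce sums $\sum_z c_z\varphi(e_{z,z_0})\xi$ with a common $\xi$. For such parallel families you do get $\|\sum_z c_z\varphi(e_{z,z_0})\xi\|\le\|\varphi\|\,\|c\|_p\|\xi\|$, but nothing for a general family $(\xi_z)$.

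The uniform bound on $\varphi(1_S)$ only makes $\bigoplus_z\varphi(e_{z,z})F$ an unconditional decomposition. In an $L^p$-space this yields square-function control $\|\sum_z y_z\|\approx\|(\sum_z|y_z|^2)^{1/2}\|_p$, not the $l^p$-sum $(\sum_z\|y_z\|^p)^{1/p}$ that you need. The same problem affects the lower bound via the left inverses $\varphi(e_{z_0,z})$. The dimension-free two-sided bounds on $W$ amount to a uniform similarity statement for bounded representations of $M_m^p$ on $L^p$-spaces, and your sketch does not prove it. Your fallback, the citation, is what actually carries (ii).
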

\begin{proof}
(i) Let $A=M_n^p$. If $p=2$, then $W^*_p(G,W^*_p(\hat{G},A,\iota),\beta))$ is a $C^*$-algebras. Therefore, $W^*_p(G,W^*_p(\hat{G},A,\iota),\beta))$ is $2$-incompressible. If $p\neq 2$, then $\Phi:W^*_p(G,W^*_p(\hat{G},A,\iota),\beta))\rightarrow \mathcal{B}(l^p(G))\bar{\otimes}A\subset\mathcal{B}(l_n^p\otimes_p l^p(G)))$ is injective contractive and can bounded below (see Theorem \ref{Thm1}). Hence $W^*_p(G,W^*_p(\hat{G},A,\iota),\beta))$ is not $p$-incompressible.

(ii) Since $\mathcal{B}(l^p(G))\bar{\otimes} A$ is isometrically isomorphic to $\mathcal{B}(l^p_n\otimes_p l^p(G))$, it follows from \cite{incompressible} that $\mathcal{B}(l^p(G))\bar{\otimes} A$ is $p$-incompressible.
\end{proof}

\section*{Acknowledgements}

The author is supported by the Research Start-up Funding Program of Hangzhou Normal University (grant number 4235C50224204070).

%We are much indebted to Yongjiang Duan, Kunyu Guo, Youqing Ji, Yanyue Shi, Kai Wang and Dechao Zheng for their interest and constructive comments on this work.
%The first author was supported by National Natural Science Foundation of China (Grant
%No. 11671167).

%%%%%%%%%%%%%%%%%%%%%%%%%%%%%%%%%%%%%%%%%55

\end{document}